\newtheorem{theorem}{Theorem}
\newtheorem{lemma}[theorem]{Lemma}
\newtheorem{prop}[theorem]{Proposition}
\newtheorem*{A}{ Theorem A}
\newtheorem*{B}{ Theorem B}
\newtheorem*{F}{ Theorem C}
\newtheorem*{proc}{Problem}
\theoremstyle{definition}
\newtheorem{remark}[theorem]{Remark}
\newtheorem*{proA}{Problem A}
\newtheorem*{proB}{Problem B}
\newtheorem*{conj}{Conjecture}
\newtheorem*{proofa}{Proof of Theorem A}
\newtheorem*{proofb}{Proof of Theorem B}
\newtheorem{eg}{Example}
\begin{document}

\title[Semi-commutants of Toeplitz Operators]{Semi-commutants of Toeplitz Operators on Fock-Sobolev space of Nonnegative orders }

\author{Jie Qin}
\address{School of Mathematics and Statistics, Chongqing Technology and Business University, 400067, China}
\email{qinjie24520@163.com}

\subjclass[2010]{Primary 47B35}
\keywords{Semi-commutants; Toeplitz operator; Fock-Sovolev sapce; Commuting problem}
\thanks{The author was supported by the National Natural
Science Foundation of China (11971125, 12071155).}

\begin{abstract}

We make a progress towards describing the semi-commutants of Toeplitz
operators on Fock-Sobolev space of nonnegative orders. We generalize the results in \cite{Bauer1,Qin}.
For the certain
symbol spaces, we obtain two Toeplitz operators can semi-commute only in the trivial cases, which is different from  what is known for the classical
Fock spaces.
As an application, we consider the  conjecture which was shown to
be false for Fock space in \cite{MA}. The main results of this paper say that  there is the fundamental difference
between the geometries of Fock and Fock-Sobolev spaces.
\end{abstract}

\maketitle

\section{Introduction}
 Let $\mathbb{C}$ denote the complex plane and  $dA$ be the area measure. For any fixed nonnegative integer $m$,  $L^2_m$ is the space of Lebesgue measurable functions $f$ on $\mathbb{C}$ such that the function $f(z)$ is in $L^2(\mathbb{C},\frac{|z|^{2m}e^{-|z|^2}}{\pi m!}dA(z)).$ It is well known that $L^2_m$ is a Hilbert space with the inner product $$<f,g>=\frac{1}{\pi m!}\int_{\mathbb{C}} f(z)\overline{g(z)}|z|^{2m}e^{-|z|^2}dA(z),\quad f,g\in L^2_m.$$

The Fock-Sobolev space $F^{2,m}$ consists of all entire functions $f$ in  $L^2_m$. Obviously, the Fock-Sobolev space $F^{2,m}$ is a closed subspace of the Hilbert space $L^2_m$. There is an orthogonal projection $P$ from $L^2_m$ onto $F^{2,m}$, which is given by
$$Pf(z)=\frac{1}{\pi m!}\int_{\mathbb{C}} f(w){K_m(z,w)}|w|^{2m}e^{-|w|^2}dA(w),\quad f\in F^{2,m},$$
where $$K_m(z,w)=\sum_{k=0}^\infty \frac{m!}{(k+m)!} (z\overline{w})^k=\frac{e^{z\overline{w}}-q_m(z\overline{w})}{(z\overline{w})^m}$$ is the reproducing kernel of Fock-Sobolev space $F^{2,m}.$ Here, $q_0=0$ and $q_m$ is the Taylor polynomial of $e^{z\overline{w}}$ of order $m-1$ for all $m\geq1$,  see \cite{Cho}.

Let $\mathcal{D}$ be the set of all finite linear combinations of kernel functions. Suppose $\varphi$ is a Lebesgue measurable function  on $\mathbb{C}$  that satisfies
\begin{equation}\label{welldown}
\int_{\mathbb{C}}|\varphi(w)||K_m(z,w)||w|^{2m}e^{-|w|^2}dA(w)<\infty.
\end{equation}
So we can densely define the Toeplitz operator with the symbol $\varphi$ on $F^{2,m}$ as follows:
 $$T_\varphi f=P(\varphi f).$$

Let $k_{m,z}(w)=k_m(w,z)$ be the normalization of the reproducing kernel $K_m(w,z).$ If $\varphi$ satisfies condition (\ref{welldown}), the Berezin transform of  $T_\varphi$ is $$\widetilde{T}_\varphi(z)=<T_\varphi k_m(w,z),k_m(w,z)>.$$
For the Fock space $F^2$, there is a Weyl operator $U_z$ on $F^2$ such that $U_{z} f(w)=f(w-z)k_{F, z}(w),$ where $k_{F, z}(w)$ is normalization of the reproducing kernel $F^2.$ By the property of $U_z$, the Berezin  transform of $f$ such that $$
\widetilde{f}(z)=\int_{\mathbb{C}} f(z \pm w) e^{-|w|^2}dA(w).
$$

For two entire functions $f$ and $g$, the semi-commutant of $T_f$ and $T_{\overline{g}}$ is given by
$$(T_f,T_{\overline{g}}]=T_{f\overline{g}}-T_{f}T_{\overline{g}}.$$ Many authors have studied the  semi-commutants(or commutants) on the Hardy space and Bergman space.
For these studies, refer to \cite{Axler,Brown,Lee,zheng}. The
problem  on Fock space  being still far from its solution, all the results
known so far  are restricted to certain subclasses of symbols; see, for example, \cite{Bauer1,Bauer2}. This commuting problem is left open.

Define $\epsilon(\mathbb{C})$ be the set of holomorphic functions $g$ on $\mathbb{C}$ such that
$|g(z)|e^{-c|z|}$ is essentially bounded on $\mathbb{C}$ for some $c>0.$
Let $\mathcal{P}$ be the space of all holomorphic polynomials on $\mathbb{C}$. Define $$\mathcal{A}_1=\bigg\{\sum_{j=1}^N p_j K_{a_j}: p_j\in \mathcal{P},a_j\in \mathbb{C}\quad\text{for}\quad j=1,\cdots,N\bigg\}.$$ In fact, if $f,g\in \epsilon(\mathbb{C})$ and $T_fT_{\overline{g}}=T_{f\overline{g}}$ on $F^2,$ then  $f,g\in \mathcal{A}_1$ by the elementary theory of the ordinary differential equations with constant coefficients. More explicitly,  for $f,g\in \mathcal{A}_{1},$ the berezin of $f\overline{g}$ must be of the form
$$
\sum_{i,j}\widetilde{\left\{p_i \bar{\eta_j} K_{a_i} \overline{K_{b_j}}\right\}}(z)=\sum_{i,j}e^{b_j \cdot \overline{a_i}} K_{a_i}(z) K_z(b_j) \eta_j^{*}\left(\partial_{z}+\overline{z}+\overline{a_i}\right) p(z+b_j),
$$
where $K_{a_j}$ is kernel of $a_j$ on Fock space, see Lemma 3.3 in \cite{Bauer1}.
Based on these facts, Bauer, Choe and Koo \cite{Bauer1} obtained
\begin{theorem}\label{A}\cite{Bauer1}
 Suppose $f,g\in \epsilon(\mathbb{C}),$ then the following statements are equivalent on $F^2$:\\
(a) $(T_f,T_{\overline{g}}]=0$.\\
(b)  $\widetilde{f\overline{g}}=f\overline{g}.$\\
(c)  Either (1) or (2) holds;

(1) $f$ or \ $g$ is constant.

(2)  There are finite collections  $\{a_j\}_{j=1}^N$  and  $\{b_l\}_{l=1}^M$ of distinct complex numbers such that
$$f \in Span \{K_{a_1},\cdots, K_{a_N}\},\quad \text{and} \quad g\in Span \{K_{b_1},\cdots, K_{b_M}\}$$
with \ $a_j\overline{b_l}= 2\lambda\pi \mathrm{i}$  for each  $j$ and $l$. Here, $\lambda$ is any integer.
\end{theorem}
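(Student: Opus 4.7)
My plan is to close the cycle (a)$\Rightarrow$(b)$\Rightarrow$(c)$\Rightarrow$(a), with the main work in (b)$\Rightarrow$(c).

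For (a)$\Leftrightarrow$(b), the idea is to identify the Berezin transform of $T_f T_{\overline g}$ explicitly. I would first verify that for $g\in\epsilon(\mathbb C)$ the conjugate-analytic Toeplitz operator acts on normalized reproducing kernels as $T_{\overline g}k_z=\overline{g(z)}\,k_z$, by completing the square in the Gaussian integral defining $P(\overline g\,k_z)(w)$ and using $\widetilde{\overline g}=\overline g$ (valid for entire $g$ on $F^2$). Combined with $T_f k_z=f k_z$ for entire $f$, this yields
$$\widetilde{T_f T_{\overline g}}(z)=\langle T_{\overline g}k_z,\,T_{\overline f}k_z\rangle = f(z)\overline{g(z)}.$$
Since $\widetilde{T_{f\overline g}}=\widetilde{f\overline g}$ by definition and the Berezin transform is injective on bounded operators on $F^2$, (a) and (b) are equivalent.

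For (c)$\Rightarrow$(b), the plan is to compute $\widetilde{K_a\overline{K_b}}(z)=e^{b\overline a}\,K_a(z)\overline{K_b(z)}$ by evaluating a standard Gaussian integral. Under the hypothesis $a_j\overline{b_l}\in 2\pi i\mathbb Z$, every factor $e^{b_l\overline{a_j}}$ equals $1$, so each cross term is a Berezin fixed point, and so is $f\overline g$ by linearity. The trivial case where $f$ or $g$ is constant follows immediately from $\widetilde{\overline g}=\overline g$ for entire $g$.

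The heart of the argument is (b)$\Rightarrow$(c). The remark preceding the theorem (via ODEs with constant coefficients) gives $f,g\in\mathcal A_1$, so I can write $f=\sum_{i=1}^N p_i K_{a_i}$ and $g=\sum_{j=1}^M \eta_j K_{b_j}$ with distinct points and polynomial coefficients. Substituting into $\widetilde{f\overline g}=f\overline g$ via the formula quoted in the excerpt and factoring out the common exponential $e^{z\overline{a_i}+\bar z b_j}$, linear independence of these exponentials produces the pairwise identity
$$p_i(z)\,\overline{\eta_j(z)} \;=\; e^{b_j\overline{a_i}}\;\eta_j^{*}\!\bigl(\partial_z+\bar z+\overline{a_i}\bigr)\,p_i(z+b_j).$$
Since $[\partial_z,\bar z]=0$, the right-hand side expands as $\sum_\ell\frac{1}{\ell!}p_i^{(\ell)}(z+b_j)\,(\eta_j^{*})^{(\ell)}(\bar z+\overline{a_i})$; comparing with the Taylor expansion $p_i(z)=\sum_\ell\frac{(-b_j)^\ell}{\ell!}p_i^{(\ell)}(z+b_j)$ and equating coefficients of the linearly independent polynomials $p_i^{(\ell)}(z+b_j)$ yields, for each $\ell\le\deg p_i$,
$$(\eta_j^{*})^{(\ell)}(\bar z+\overline{a_i}) \;=\; e^{-b_j\overline{a_i}}(-b_j)^{\ell}\,\eta_j^{*}(\bar z).$$

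The hard part will be extracting the structural conclusion from this family of relations. The $\ell=0$ equation, on matching leading $\bar z$-coefficients, immediately forces the quantization $e^{b_j\overline{a_i}}=1$, i.e.\ $a_i\overline{b_j}\in 2\pi i\mathbb Z$, as required in (c)(2). The equations with $\ell\ge 1$ are only active when $p_i$ is non-constant; a degree comparison in $\bar z$ between the two sides then forces $\eta_j$ to be constant and $b_j=0$. Since the $b_j$ are distinct, at most one of them can vanish, so the presence of any non-constant $p_i$ collapses $g$ to a single constant term, placing us in case (c)(1); the symmetric argument (exchanging the roles of $f$ and $g$) shows that any non-constant $\eta_j$ forces $f$ constant. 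Otherwise every $p_i$ and every $\eta_j$ is constant, so $f$ and $g$ are genuine linear combinations of kernels and the already-established quantization puts us in case (c)(2), completing the proof.
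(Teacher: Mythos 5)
The paper does not prove Theorem~\ref{A}; it is quoted from \cite{Bauer1}, and the surrounding text only sketches the strategy (the ODE reduction to $\mathcal{A}_1$ and the explicit Berezin-transform formula of Lemma~3.3 of \cite{Bauer1}). Your proposal faithfully reconstructs that reference's argument: $T_{\overline g}k_z=\overline{g(z)}\,k_z$ plus Berezin injectivity gives (a)$\Leftrightarrow$(b); a Gaussian integral gives $\widetilde{K_a\overline{K_b}}=e^{b\overline a}K_a\overline{K_b}$ for (c)$\Rightarrow$(b); and for (b)$\Rightarrow$(c) you factor by linear independence of the exponentials, expand $\eta_j^*(\partial_z+\bar z+\overline{a_i})p_i(z+b_j)=\sum_\ell\tfrac1{\ell!}p_i^{(\ell)}(z+b_j)(\eta_j^*)^{(\ell)}(\bar z+\overline{a_i})$, and compare coefficients of the linearly independent $p_i^{(\ell)}(z+b_j)$. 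The $\ell=0$ relation forces $e^{b_j\overline{a_i}}=1$ by matching leading $\bar z$-coefficients, and the $\ell\ge1$ relations (active precisely when some $p_i$ is non-constant) force every $b_j=0$ and every $\eta_j$ constant, whence $M=1$ and $g$ is constant; by symmetry, and exhausting cases, this yields (c). Two points you should not leave implicit: Berezin injectivity is being applied to a densely defined, possibly unbounded operator, so the argument should run through polarization on the kernel span (holomorphy of $\langle TK_a,K_b\rangle$ in $\bar a$ and $b$) rather than a boundedness assumption; and the step ``$f,g\in\epsilon(\mathbb C)$ together with (b) implies $f,g\in\mathcal A_1$'' is a genuine lemma (the constant-coefficient ODE argument) carrying real weight, whose omission leaves the proposal incomplete as a self-contained proof, although the paper itself treats it as given.
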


 The Weyl operator $U_{z}$ is a key to consider the Operator Theory in $F^2$, the results of \cite{Bauer1,MA} are based on the property of $U_z.$
However, the translations do not have such good property on Fock-Sobolev space $F^{2,m}$. Namely, the Berezin transform of the function can almost never be computed explicitly. So we know very little about the the semi-commutativity of Toeplitz operator on Fock-Sobolev spaces.

So far, there are few results of semi-commuting Toeplitz operators  on the Fock-Sobolev space, and the symbol space is small.   Recall that $\mathcal{D}$ is the set of all finite linear combinations of kernel functions of Fock-Sobolev space. There is a  result, which can be found in our previous paper.
\begin{theorem}\label{B}\cite{Qin}
Suppose  $m$ is positive integer, the following conditions are equivalent for any two functions $f$ and $g$ in $\mathcal{D}$.\\
 (a) $T_{f}T_{\overline{g}}=T_{f\overline{g}}$ on $F^{2,m}$.\\
 (b)  $\widetilde{f\overline{g}}=f\overline{g}.$\\
 (c) At least one of  $f$ and $g$ is a constant.
\end{theorem}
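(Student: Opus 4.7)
The plan is to prove the chain $(c) \Rightarrow (a) \Rightarrow (b) \Rightarrow (c)$; the first two implications are straightforward, and the substance of the argument lies in the third. For $(c) \Rightarrow (a)$: a constant $f$ makes $T_f$ a scalar, so $T_f T_{\bar g} = c T_{\bar g} = T_{c\bar g} = T_{f\bar g}$, and symmetrically when $g$ is constant.

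For $(a) \Rightarrow (b)$, observe that each $f \in \mathcal{D}$ has at most exponential type, so $fh$ is entire and in $L^2_m$ for every $h \in F^{2,m}$, and therefore $T_f h = P(fh) = fh$. Applying this to $h = T_{\bar g} k_{m,z}$ and invoking the reproducing property on the entire function $f \cdot T_{\bar g} k_{m,z}$,
\[
\widetilde{T_f T_{\bar g}}(z) \;=\; \frac{f(z)\,(T_{\bar g} k_{m,z})(z)}{\sqrt{K_m(z,z)}}.
\]
A parallel calculation using the adjoint identity $\langle \bar g k_{m,z}, K_{m,z}\rangle = \langle k_{m,z}, g K_{m,z}\rangle$ and reproducing on the entire function $g K_{m,z}$ gives $(T_{\bar g} k_{m,z})(z) = \overline{g(z)}\sqrt{K_m(z,z)}$, so $\widetilde{T_f T_{\bar g}}(z) = f(z)\overline{g(z)}$. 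Combined with the elementary identity $\widetilde{T_{f\bar g}} = \widetilde{f\bar g}$ and injectivity of the Berezin transform, $(a)$ becomes $\widetilde{f\bar g}(z) = f(z)\overline{g(z)}$, which is $(b)$.

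For $(b) \Rightarrow (c)$, write $f = \sum_i \alpha_i K_{m,a_i}$ and $g = \sum_j \beta_j K_{m,b_j}$ with the $a_i$'s distinct, the $b_j$'s distinct, and all coefficients nonzero. Since $K_{m,0} \equiv 1$, any term with $a_i = 0$ or $b_j = 0$ contributes only a constant; after separating these out, it suffices to derive a contradiction when all remaining $a_i, b_j \neq 0$. The computational tools are the integral representation $K_m(z,a) = m \int_0^1 (1-t)^{m-1} e^{z\bar a t}\,dt$ valid for $m \geq 1$, and the Gaussian-moment identity
\[
\frac{1}{\pi m!}\int_{\C} e^{-|w|^2 + w\xi + \bar w\eta}\,|w|^{2m}\,dA(w) \;=\; e^{\xi\eta}L_m(-\xi\eta),
\]
with $L_m$ the $m$-th Laguerre polynomial. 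Substituting both into the integral $K_m(z,z)\widetilde{f\bar g}(z)$ and into $K_m(z,z) f(z)\overline{g(z)}$, and matching coefficients of the Taylor expansion in $z$ and $\bar z$, produces the infinite scalar system
\[
\sum_{i,j} \alpha_i\overline{\beta_j}\,\overline{a_i}^{\,n}b_j^{\,n'}\,I_{i,j}(n,n') \;=\; 0, \qquad n, n' \geq 0,
\]
with $I_{i,j}(n,n') = ((m-1)!)^2 \sum_{k\geq 1} \frac{(\overline{a_i}b_j)^k(n+k)!(n'+k)!}{k!(n+k+m)!(n'+k+m)!}$. A $1/n$-expansion of the factorial ratios yields
\[
I_{i,j}(n,n') \;=\; \frac{((m-1)!)^2}{n^m n'^m}\left[\bigl(e^{\overline{a_i}b_j}-1\bigr) - \bigl(m\overline{a_i}b_j e^{\overline{a_i}b_j} + \tfrac{m(m+1)}{2}(e^{\overline{a_i}b_j}-1)\bigr)\bigl(\tfrac{1}{n}+\tfrac{1}{n'}\bigr) + O(n^{-2})\right].
\]
Since the pairs $(\overline{a_i},b_j)$ are distinct in $\C^2$, the monomials $\overline{a_i}^{\,n}b_j^{\,n'}$ are linearly independent as functions of $(n,n')$ by a Vandermonde argument; the leading balance in the system therefore forces $e^{\overline{a_i}b_j} = 1$ for every pair $(i,j)$, exactly the Bauer constraint of the pure Fock case. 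Substituting this back collapses the subleading coefficient to $m\overline{a_i}b_j$, and the next balance becomes the factorised identity $(\sum_i \alpha_i \overline{a_i}^{n+1})(\sum_j \overline{\beta_j}b_j^{n'+1}) = 0$ for all sufficiently large $n, n'$. A second Vandermonde argument applied separately to the distinct nonzero $\overline{a_i}$'s and to the distinct nonzero $b_j$'s forces either all $\alpha_i = 0$ or all $\overline{\beta_j} = 0$, contradicting our assumption that both $f$ and $g$ contain a non-constant kernel.

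The delicate step, and the one where the Fock-Sobolev case truly diverges from the Fock case, is the subleading $1/n$-expansion of $I_{i,j}(n,n')$: when $m = 0$ the weight $|w|^{2m}$ and the Laguerre polynomial are trivial, no subleading equation exists, and the Bauer constraint alone admits the nontrivial semi-commutants found in \cite{Bauer1}; the $1/n$ correction produced by $m \geq 1$ is the sole mechanism collapsing that constraint to $\overline{a_i}b_j = 0$. Carrying this expansion uniformly across all $(i,j)$ and preserving the Vandermonde arguments when the products $\overline{a_i}b_j$ have accidental coincidences is the main technical obstacle.
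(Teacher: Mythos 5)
Your implications $(c)\Rightarrow(a)$ and $(a)\Rightarrow(b)$ are correct; in fact $(a)\Rightarrow(b)$ is cleanest if you note $T_{\overline g}k_{m,z}=\overline{g(z)}\,k_{m,z}$, so $\widetilde{T_fT_{\overline g}}(z)=\overline{g(z)}\widetilde f(z)=f(z)\overline{g(z)}$. The trouble is in $(b)\Rightarrow(c)$, which is the entire content of the theorem, and there I see a computational error and a genuine logical gap.

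\textbf{The scalar system is not of the claimed form.} Expand $f(w)=\sum_n F_nw^n$ with $F_n=\frac{m!}{(n+m)!}\sum_i\alpha_i\overline{a_i}^{\,n}$ and $\overline{g(w)}=\sum_{n'}\overline{G_{n'}}\,\overline w^{\,n'}$ similarly. Matching the coefficient of $z^P\overline z^{\,Q}$ (take $P\ge Q$) in $K_m(z,z)\widetilde{f\overline g}(z)=K_m(z,z)f(z)\overline{g(z)}$ gives
\[
\sum_{k\ge 0}F_{k+P-Q}\,\overline{G_k}\Bigl[\tfrac{(k+P+m)!}{(P+m)!(Q+m)!}-\tfrac{\mathbf 1_{\{k\le Q\}}}{(Q-k+m)!}\Bigr]=0,
\]
which after inserting $F_n,G_{n'}$ has the structure
\[
\sum_{i,j}\alpha_i\overline{\beta_j}\,\overline{a_i}^{\,P-Q}\,J_{i,j}(P,Q)=0,
\]
with a single factored power $\overline{a_i}^{\,P-Q}$, not the two independent powers $\overline{a_i}^{\,n}b_j^{\,n'}$ you wrote, and with a sum over $k$ truncated by an indicator $\mathbf 1_{\{k\le Q\}}$ that does not appear in your $I_{i,j}$. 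This matters: your intended Vandermonde argument acts simultaneously on $\overline{a_i}^{\,n}$ and $b_j^{\,n'}$, but the true system only gives you one free Vandermonde power at a time.

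\textbf{The asymptotic/Vandermonde step is not rigorous.} Even granting a system of the shape $\sum_{i,j}c_{i,j}(n,n')\,\overline{a_i}^{\,n}b_j^{\,n'}=0$, the coefficients $c_{i,j}(n,n')$ vary with $(n,n')$; the classical linear-independence argument for $\{\overline{a_i}^{\,n}b_j^{\,n'}\}$ requires constant coefficients. Passing to the leading term of a $1/n$ expansion and then applying Vandermonde requires a genuine limiting argument (uniform control of the remainder relative to the growth rates $|\overline{a_i}b_j|$, and a treatment of pairs with equal moduli). Your final paragraph concedes exactly this --- ``carrying this expansion uniformly \ldots is the main technical obstacle'' --- but leaves it unresolved, so the proof is incomplete at precisely its key step. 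The same objection applies to the ``second Vandermonde'' that is supposed to factor the subleading balance.

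For contrast: the paper does not prove Theorem~\ref{B} here (it is cited from \cite{Qin}), but Proposition~\ref{pp3} --- its generalization --- indicates the method: apply $T_{f\overline g}-T_fT_{\overline g}$ to the monomials $z^l$, compare Taylor coefficients to get \emph{exact} finite identities (the role of the explicit combinatorial evaluation of $C(A,B)$ in Lemma~\ref{LL1}), deduce $e^{\overline{a_i}b_j}=1$ from the $l=0$ equations, and then push further with the $l=j\ge 1$ equations until one reaches $\sum_{i,j}\alpha_i\overline{\beta_j}\,\overline{a_i}^{\,l}b_j^{\,l}=0$ for all $l$, which closes via a clean Vandermonde with constant coefficients. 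That route sidesteps asymptotics entirely. Your Berezin/Laguerre route could in principle be made to work, but as written the derived system is wrong and the asymptotic extraction is not justified, so the argument does not yet establish $(b)\Rightarrow(c)$.
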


 Let $\mathcal{D}_1$ be the set of all finite linear combinations of kernel function of Fock space.
There are two natural problems, which arise from Theorem \ref{A} and \ref{B}.
\begin{proA}\label{C}
Suppose $f\in \mathcal{D}$ and $g\in \mathcal{D}_1$. Determine the $f$ and $g$ for which the  semi-commutant  $(T_f,T_{\overline{g}}]$ is zero on $F^{2,m}$.
\end{proA}

\begin{proB}\label{D}
Suppose $f$ and $g$ are functions in $\mathcal{A}_{1}$. What is the relationship between their symbols when $(T_f,T_{\overline{g}}]=0$ on $F^{2,m}?$
\end{proB}

If $K_m(z,A)\in \mathcal{A}_{1}$ and $A,m\neq0,$ then
$$K_m(z,A)=\sum_{i=1}^N  p_i(z)e^{A_iz}.$$
 It follows that
$$0=\sum_{i=1}^N (z\overline{A})^mp_i(z)e^{A_iz}+m!q_m(z\overline{A})-m!e^{z\overline{A}}.$$
Clearly, $\sum_{j}(z\overline{A})^mp_j(z)-m!\not\equiv0$ for $A_j=\overline{A}$.
Note that $e^{A_iz}$ form a linearly independent set over the polynomials if $\{A_i\}_{i}$  is a finite collection of distinct non-zero complex numbers. Then  $\{e^{A_1z},\cdots,e^{A_Nz}, 1\}$ is linearly independent over the polynomials.
So, $\mathcal{D}\cap \mathcal{A}_{1}=\mathbb{C}$ if $m\neq0.$

 The purpose
of the current paper is to explore the two problems. In order to answer the first problem, we define the following symbol space $\mathfrak{D}$. The symbol space $\mathfrak{D}$ is given by
$$\mathfrak{D}=\mathcal{D}\cup \mathcal{D}_1 \cup \mathcal{P}.$$ We obtain  that there
are actually only trivial functions $f\in \mathcal{D}$ and $g\in \mathcal{D}_1$ such that
$(T_f,T_{\overline{g}}]=0$.  Precisely, the first main result is stated as follows.
\begin{A}
Let m be a positive integer, and suppose $f,g\in \mathfrak{D}$ . Then the following conditions are equivalent .\\
 (a) $(T_{f},T_{\overline{g}}]=0$ on $F^{2,m}$.\\
 (b)  $\widetilde{f\overline{g}}=f\overline{g}.$\\
 (c) At least one of  $f$ and $g$ is a constant.
\end{A}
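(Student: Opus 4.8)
I would prove $(c)\Rightarrow(a)\Rightarrow(b)$ directly and spend the effort on $(b)\Rightarrow(c)$. For $(c)\Rightarrow(a)$: if $f\equiv\gamma$ then $T_f=\gamma I$ and $T_{f\overline g}=\gamma T_{\overline g}$, so $(T_f,T_{\overline g}]=0$, and constant $g$ is symmetric. For $(a)\Rightarrow(b)$: take Berezin transforms in $T_{f\overline g}=T_fT_{\overline g}$ and use that every element of $\mathfrak D$ is entire of at most exponential growth, so $T_{\overline g}k_{m,z}=\overline{g(z)}k_{m,z}$ and $\widetilde{T}_f(z)=f(z)$; this gives $\widetilde{f\overline g}(z)=f(z)\overline{g(z)}$. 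For $(b)\Rightarrow(c)$ the first step is to make $(b)$ explicit. Using $K_m(z,w)=m!\big(e^{z\overline w}-q_m(z\overline w)\big)/(z\overline w)^m$ and $K_m(z,z)=m!\big(e^{|z|^2}-q_m(|z|^2)\big)/|z|^{2m}$, and clearing $|z|^{2m}$, condition $(b)$ becomes
\begin{equation*}
\pi\big(e^{|z|^2}-q_m(|z|^2)\big)f(z)\overline{g(z)}=\int_{\mathbb C}f(w)\overline{g(w)}\,\big|e^{w\overline z}-q_m(w\overline z)\big|^{2}e^{-|w|^2}\,dA(w);\tag{$\ast$}
\end{equation*}
equivalently, $\widetilde{T}_{f\overline g}$ is the Berezin transform of $f\overline g$ against the reproducing kernels $e^{w\overline z}-q_m(w\overline z)$ of the subspace $z^mF^2\subset F^2$, to which $F^{2,m}$ is unitarily equivalent via $h\mapsto z^mh$.

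Expanding $|e^{w\overline z}-q_m(w\overline z)|^2$ and completing the square in the leading term, $\int f\overline g\,e^{w\overline z+\overline wz-|w|^2}\,dA=\pi e^{|z|^2}\widetilde{(f\overline g)}_{F^2}(z)$ with $\widetilde{(\cdot)}_{F^2}$ the Fock Berezin transform, turns $(\ast)$ into
\begin{equation*}
\pi e^{|z|^2}\big[f(z)\overline{g(z)}-\widetilde{(f\overline g)}_{F^2}(z)\big]=\pi q_m(|z|^2)f(z)\overline{g(z)}-\mathrm{II}(z)-\mathrm{III}(z)+\mathrm{IV}(z),\tag{$\ast\ast$}
\end{equation*}
where $\mathrm{II},\mathrm{III},\mathrm{IV}$ are the integrals of $f\overline g$ against $e^{w\overline z}\,\overline{q_m(w\overline z)}$, $q_m(w\overline z)\,e^{\overline wz}$, and $|q_m(w\overline z)|^2$. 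Since $\operatorname{Re}(w\overline z)-|w|^2\le|z|^2/4$ and $q_m$ has degree $m-1$, the whole right side of $(\ast\ast)$ is $O\big(e^{|z|^2/4}(1+|z|)^N\big)$, so dividing by $e^{|z|^2}$ gives
\begin{equation*}
f(z)\overline{g(z)}-\widetilde{(f\overline g)}_{F^2}(z)=O\big(e^{-3|z|^2/4}(1+|z|)^N\big),\qquad|z|\to\infty.\tag{$\dagger$}
\end{equation*}

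Assume first $f,g\in\mathcal D_1\cup\mathcal P$. Then $\widetilde{(f\overline g)}_{F^2}=\sum_{n\ge0}f^{(n)}\overline{g^{(n)}}/n!$ is, like $f\overline g$, a finite linear combination of functions $z^p\overline z^q e^{z\mu+\overline z\nu}$, and no nonzero such combination obeys $(\dagger)$; hence $\widetilde{(f\overline g)}_{F^2}=f\overline g$. By Theorem~\ref{A} on $F^2$ (valid because $\mathfrak D\subset\epsilon(\mathbb C)$) either one of $f,g$ is constant --- and we are done --- or $f\in\operatorname{Span}\{K_{a_1},\dots,K_{a_N}\}$, $g\in\operatorname{Span}\{K_{b_1},\dots,K_{b_M}\}$ (Fock kernels) with $a_i\overline{b_l}\in2\pi\mathrm i\,\mathbb Z$ for all $i,l$. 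A nonconstant polynomial lies in no span of Fock kernels and $\mathcal D\cap\mathcal A_1=\mathbb C$, so the second alternative forces $f,g\in\mathcal D_1$, say $f=\sum_ic_iK_{a_i}$, $g=\sum_ld_lK_{b_l}$ with all $c_i,d_l\neq0$ and the $a_i$ (resp.\ $b_l$) distinct. Feeding this into $(\ast\ast)$ --- whose left side is now $0$ --- and evaluating the now elementary Gaussian integrals, using $e^{\overline{a_i}b_l}=1$, one finds that $\mathrm{II},\mathrm{III},\mathrm{IV}$ carry only the exponential types $e^{\overline z b_l}$, $e^{z\overline{a_i}}$ and $1$, while $q_m(|z|^2)f\overline g$ contains, for any $(i_0,l_0)$ with $a_{i_0}\neq0\neq b_{l_0}$, the term $c_{i_0}\overline{d_{l_0}}\,q_m(|z|^2)e^{z\overline{a_{i_0}}+\overline z b_{l_0}}$ of a type absent on the right. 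By linear independence of the $z^p\overline z^q e^{z\mu+\overline z\nu}$ this yields $c_{i_0}\overline{d_{l_0}}\,q_m(|z|^2)\equiv0$, contradicting $q_m(0)=1$. Hence one of $f,g$ is constant.

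It remains to treat $f\in\mathcal D$ (the case $f,g\in\mathcal D$ being Theorem~\ref{B}, so that we may take $g\in\mathcal D_1\cup\mathcal P$), and this is the main obstacle: $K_m$ being transcendental, $f\overline g-\widetilde{(f\overline g)}_{F^2}=-\sum_{n\ge1}f^{(n)}\overline{g^{(n)}}/n!$ is not a polynomial-exponential, so $(\dagger)$ does not at once force $\widetilde{(f\overline g)}_{F^2}=f\overline g$. I would close this by inserting the asymptotics $K_m(z,\beta)=m!\,e^{z\overline\beta}(z\overline\beta)^{-m}\big(1+O(1/z)\big)$ and the induced expansions of the derivatives and translates of $f$ along each ray $z=t\gamma$: these identify $z^m\big[f\overline g-\widetilde{(f\overline g)}_{F^2}\big]$, up to a ray-wise lower-order remainder, with a finite polynomial-exponential sum, which by $(\dagger)$ must vanish; iterating the expansion removes the remainder, and one concludes $\widetilde{(f\overline g)}_{F^2}=f\overline g$ and finishes as above through Theorem~\ref{A} and $\mathcal D\cap\mathcal A_1=\mathbb C$. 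This is precisely where $m\ge1$ is indispensable: for $m=0$ the correction $q_m$ vanishes, $(\ast)$ degenerates to the bare Fock identity $\widetilde{(f\overline g)}_{F^2}=f\overline g$, and the non-constant solutions supplied by Theorem~\ref{A}(c)(2) survive --- the contrast between the two geometries highlighted in the introduction.
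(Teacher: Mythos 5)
Your route is genuinely different from the paper's. The paper proves Theorem A by a case split handled with coefficient comparisons: Lemma \ref{LL6} (polynomial symbol, via a determinant of Taylor coefficients), Proposition \ref{pp2} together with Lemma \ref{LL3} (both symbols in $\mathcal D_1$, via the explicit Gaussian integrals (R1)), and Proposition \ref{pp3} with the combinatorial Lemma \ref{LL1} (the mixed case $f\in\mathcal D$, $g\in\mathcal D_1$, by applying the operator identity to the monomials $z^l$ and showing the coefficient $d_l=1-\binom{l+m}{l}$ does not vanish when $m\geq 1$). You instead rewrite (b) as the integral identity $(\ast)$, peel off the Fock Berezin transform, obtain the Gaussian-decay estimate $(\dagger)$ (correct, up to a harmless extra factor $e^{c|z|}$ coming from the growth of $f\overline g$), and then feed the conclusion $\widetilde{(f\overline g)}_{F^2}=f\overline g$ into Theorem \ref{A} together with $\mathcal D\cap\mathcal A_1=\mathbb C$. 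For $f,g\in\mathcal D_1\cup\mathcal P$ this works: your $\mathrm{II},\mathrm{III},\mathrm{IV}$ are exactly the integrals (R1) of Lemma \ref{LL3}, the linear-independence argument isolating the mixed type $e^{A_{i_0}z+\overline{B_{j_0}}\overline z}$ against $q_m(|z|^2)$ is sound, and the auxiliary fact that no nonzero finite combination of $z^p\overline z^qe^{\mu z+\nu\overline z}$ can decay like $e^{-3|z|^2/4}$ is routine (restrict to rays and use that a nonzero exponential polynomial in $t$ cannot be Gaussianly small). This part is a legitimate alternative to Proposition \ref{pp2} and Lemma \ref{LL6}.

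The gap is the remaining case $f\in\mathcal D$, $g\in\mathcal D_1\cup\mathcal P$ --- precisely the case the paper identifies as the main obstacle and settles in Proposition \ref{pp3}. Your plan there (ray-wise asymptotics $K_m(z,\beta)=m!\,e^{z\overline\beta}(z\overline\beta)^{-m}(1+O(1/z))$, then ``iterating the expansion removes the remainder'') is not a proof as written: that expansion is valid only in directions where the exponential dominates (where $\operatorname{Re}(z\overline\beta)<0$ the kernel behaves like the rational function $-m!\,q_m(z\overline\beta)(z\overline\beta)^{-m}$, so the identification with a polynomial-exponential sum is direction-dependent), the remainder is only polynomially smaller than the main term, so one application of $(\dagger)$ does not eliminate it, and the iteration is left unspecified. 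The gap is closable inside your own framework, and in fact more simply than you propose, because no asymptotics are needed: $K_m(z,A)=m!\bigl(e^{z\overline A}-q_m(z\overline A)\bigr)/(z\overline A)^m$ is an exact identity, and for $g=\sum_j b_je^{B_jz}$ one has $\widetilde{(f\overline g)}_{F^2}(z)=\sum_j\overline{b_j}\,e^{\overline{B_j}\overline z}f(z+\overline{B_j})$; hence $z^m\prod_j(z+\overline{B_j})^m\bigl[f\overline g-\widetilde{(f\overline g)}_{F^2}\bigr]$ is exactly a finite combination of terms $z^p\overline z^qe^{\mu z+\nu\overline z}$, which by $(\dagger)$ must vanish identically, giving $\widetilde{(f\overline g)}_{F^2}=f\overline g$; Theorem \ref{A} and $\mathcal D\cap\mathcal A_1=\mathbb C$ then force one symbol to be constant (the case of polynomial $g$ is analogous, the transform being the finite sum $\sum_n f^{(n)}\overline{g^{(n)}}/n!$). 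Until this (or the paper's monomial-testing argument via Lemma \ref{LL1}) is actually carried out, your proof of (b)$\Rightarrow$(c) is incomplete in its hardest case; the implications (c)$\Rightarrow$(a)$\Rightarrow$(b) are fine.
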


 For  $f\in \mathcal{D}$ and $g\in \mathcal{D}_1$, one can see that
 \begin{equation}\label{1}
f(z)=\sum_{i=1}^{N} a_{i} K_m(z, A_{i}), \quad g(z)=\sum_{j=1}^{M} b_{j} e^{B_j z}.
\end{equation}
The main obstacle of the proof of the Theorem A is to calculate the semi-commutant, although $f$ and $g$ can be written in the form in (\ref{1}).  More specifically, the idea of the proof of the Theorem is to solve the following equations of $a_k$, $A_k$, $b_j$ and $B_j$:
$$\sum_{i=1}^{N}\sum_{j=1}^{M} a_{i} \overline{b_{j}}\bigg(T_{K_m(z, A_{i})\overline{e^{B_j z}}}-T_{K_m(z, A_{i}) }T_{\overline{e^{B_j z}}}\bigg)z^l=0,$$
for any nonnegative integer $l$. The main tool for showing our first result is the above infinite number of the function equations.

As we stated earlier, there are nontrivial functions $f$ and $g$ in $\mathcal{A}_{1}$ so that
$(T_f,T_{\overline{g}}]=0$ on Fock space $F^2.$ For $f,g\in \mathcal{A}_{1},$ the answer to the second question surprised us as the operator equation $(T_f,T_{\overline{g}}]=0$ on $F^{2,m}$ only have trivial solutions if $m\neq0$. The result is different from what is known for the classical Fock spaces.  The following theorem is the second main result of the paper.
\begin{B}
Let m be a positive integer, and suppose $f,g\in \mathcal{A}_{1}$. Then the following conditions are equivalent .\\
 (a) $(T_{f},T_{\overline{g}}]=0$ on $F^{2,m}$.\\
 (b)  $\widetilde{f\overline{g}}=f\overline{g}.$\\
 (c) At least one of  $f$ and $g$ is a constant.
\end{B}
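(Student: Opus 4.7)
The implications $(c)\Rightarrow(a)\Rightarrow(b)$ are routine. A constant symbol yields a scalar Toeplitz operator, giving $(c)\Rightarrow(a)$. For $(a)\Rightarrow(b)$, the anti-reproducing identity $T_{\overline h}k_{m,z}=\overline{h(z)}k_{m,z}$, valid for every entire $h$ whose product with the kernel stays in $F^{2,m}$, gives $\widetilde{T_fT_{\overline g}}(z)=f(z)\overline{g(z)}$, so $T_{f\overline g}=T_fT_{\overline g}$ forces $\widetilde{f\overline g}=f\overline g$. The substance of the theorem is therefore $(b)\Rightarrow(c)$.

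Write
$$
f(z)=\sum_{i=1}^N p_i(z)\,e^{A_i z},\qquad g(z)=\sum_{j=1}^M q_j(z)\,e^{B_j z},
$$
with $\{A_i\}$ and $\{B_j\}$ distinct and $p_i,q_j\in\mathcal{P}\setminus\{0\}$. If every $p_i$ and every $q_j$ is constant, then $f,g\in\mathcal{D}_1\cup\mathcal{P}\subset\mathfrak{D}$ and Theorem A of the present paper already delivers $(c)$. So I may assume that some polynomial coefficient is nonconstant while both $f$ and $g$ are nonconstant, and aim to derive a contradiction from $\widetilde{f\overline g}=f\overline g$. The plan is to evaluate each piece
$$
\frac{1}{\pi m!\,\|K_m(\cdot,z)\|^2}\int_{\C}p_i(w)\overline{q_j(w)}\,e^{A_iw+\overline{B_jw}}\,|K_m(w,z)|^2|w|^{2m}e^{-|w|^2}\,dA(w)
$$
of the Berezin transform by expanding $|K_m(w,z)|^2$ through the series $K_m(z,w)=\sum_{k\ge 0}\frac{m!}{(k+m)!}(z\overline w)^k$, turning each integral into an explicit double series of Gaussian moments. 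Summing over $(i,j)$, equating with $f(z)\overline{g(z)}$ and invoking the linear independence of the distinct exponentials $e^{A_iz+\overline{B_jz}}$ over polynomials in $(z,\overline z)$ reduces the problem to a finite system of polynomial/power-series identities, one for each pair $(i,j)$.

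The main obstacle is to show that this system admits no nontrivial solution when $m\ge 1$. On $F^2$ the analogous identities permit the rigid lattice solutions $A_i\overline{B_j}\in 2\pi\mathrm{i}\,\Z$ of Theorem~\ref{A}, a rigidity sustained by the clean exponential arithmetic $K_0(w,z)=e^{w\overline z}$. The passage to $F^{2,m}$ destroys this structure in two ways: the factor $|w|^{2m}$ alters the effective Gaussian weight, and the subtraction of $q_m(z\overline w)$ in $K_m$ injects strictly polynomial contributions into the Berezin integrals. I plan to isolate the $q_m$-contribution on the left-hand side of each pairwise identity, track its highest-order monomial in $(z,\overline z)$, and show that this extra polynomial term cannot be cancelled by $p_i(z)\overline{q_j(z)}$ unless every $p_i$ and $q_j$ is constant and the corresponding $A_i,B_j$ collapse to the rigid Fock lattice, at which point Theorem A of the present paper and Theorem~\ref{B} complete the reduction to $(c)$.
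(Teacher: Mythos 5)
Your reduction of the theorem to the implication $(b)\Rightarrow(c)$ is sound, and your observation that when every $p_i$ and $q_j$ is constant the claim follows from Theorem~A is correct (indeed $f,g\in\mathcal{D}_1\subset\mathfrak{D}$ in that case). But the substance of the theorem --- ruling out the case where some polynomial coefficient is nonconstant --- is presented only as a plan (``I plan to isolate the $q_m$-contribution\ldots and show\ldots''), not as an argument. That is precisely where all the real work lies, and it is left undone. As written, the proposal contains a genuine gap: no verification that the ``extra polynomial term cannot be cancelled'' is given, and it is not at all obvious that tracking the top monomial of the $q_m$-correction alone forces every $p_i,q_j$ to be constant.

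There is also a more specific issue with the separation step. You invoke ``linear independence of the distinct exponentials $e^{A_iz+\overline{B_jz}}$ over polynomials in $(z,\overline z)$'' to reduce to pairwise identities. However, once you expand $|K_m(w,z)|^2$ the Berezin integral of a single summand $p_i(w)\overline{q_j(w)}e^{A_iw+\overline{B_jw}}$ does not land entirely on the single exponential $e^{A_iz+\overline{B_jz}}$: the subtracted term $q_m(z\overline w)$ generates contributions proportional to the ``partial'' exponentials $e^{A_iz}$, $e^{\overline{B_jz}}$, and to pure polynomials in $z,\overline z$ (this is visible in the paper's Proposition~\ref{pp2} and in the proof of Lemma~\ref{LL3}). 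So the clean product exponentials are not the only basis vectors in play, and the separation requires a sharper independence statement (of the type $\{e^{A_iz+\overline{B_jz}},\,e^{A_kz},\,e^{\overline{B_lz}},\,1\}$ being independent over polynomials) together with a prior comparison of the ``coefficient of $e^{|z|^2}$'' --- which you do not set up.

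For comparison, the paper's proof of Theorem~B (Section~3, Lemma~\ref{LL4} through Theorem~\ref{TT2}) does \emph{not} expand $K_m$ as a power series. It uses the closed form $K_m(z,w)=m!(e^{z\overline w}-q_m(z\overline w))/(z\overline w)^m$ to write the Berezin integral as a Fock-space integral $Q_1$ plus explicitly controlled remainders $Q_2,Q_3$ whose ``$e^{|z|^2}$-coefficient'' vanishes; this lets it apply Lemma~\ref{LL5} (Bauer--Choe--Koo) to evaluate $Q_1$, and then import the rigidity of \cite{Bauer1}, Lemma~3.4 and Theorem~3.5, essentially for free. Your series-expansion route is closer in spirit to Section~2's treatment of $\mathcal{D}$-symbols (Lemma~\ref{LL1}, Prop.~\ref{pp3}), and if carried through would likely be considerably more combinatorially involved. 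Either route could in principle work, but yours remains a sketch at the decisive step.
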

 The main tool of the proof of  Theorem B is the Berezin transform. In the case of $m=0$, our proof recover the result of semi-commuting Toeplitz operators in \cite{Bauer1}.

For  holomorphic functions $f$ and $g$, it is clear that $$
H_{\overline{f}}^{*} H_{\overline{g}}=T_{f \overline{g}}-T_{f} T_{\overline{g}}=(T_f,T_{\overline{g}}].
$$
In \cite{MA}, the authors showed that the following conjecture is false on Fock space.
\begin{conj}
Given holomorphic functions $f$ and $g,$ the Hankel product $H_{\overline{f}}^{*} H_{\overline{g}}$ is bounded if and only if the function
$$
D(f, g)(z)=\left[\widetilde{|f|^{2}}(z)-|\widetilde{f}(z)|^{2}\right]\left[\widetilde{|g|^{2}}(z)-|\widetilde{g}(z)|^{2}\right]
$$
is bounded.
\end{conj}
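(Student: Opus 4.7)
The plan is to use Theorem A and Theorem B to address the Conjecture on the Fock-Sobolev space $F^{2,m}$ with $m\geq 1$. A basic difference from the $F^{2}$ setting must be recorded first: on $F^{2}$ one has $\|H_{\overline g}k_{z}\|^{2}=\widetilde{|g|^{2}}(z)-|\widetilde{g}(z)|^{2}$, so by Cauchy--Schwarz the quantity $D(f,g)$ directly controls the Berezin transform of $H_{\overline f}^{*}H_{\overline g}$. On $F^{2,m}$, by contrast, $\widetilde{T_{g}T_{\overline g}}(z)=\|T_{\overline g}k_{m,z}\|^{2}$ does not simplify to $|\widetilde{g}(z)|^{2}$, because $T_{\overline g}$ no longer acts diagonally on normalized kernels when $g$ is holomorphic. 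Thus $D(f,g)$ carries less operator-theoretic content on $F^{2,m}$ than on $F^{2}$, and this mismatch is exactly where a counterexample (or the failure of one) must live.

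My strategy is to search for a pair of holomorphic functions $f,g$ on $F^{2,m}$ for which $H_{\overline f}^{*}H_{\overline g}=(T_f,T_{\overline g}]$ is bounded while $D(f,g)$ is unbounded, i.e.\ an $F^{2,m}$ analogue of the counterexample of \cite{MA}. I would restrict to the symbol classes $\mathfrak{D}$ and $\mathcal{A}_{1}$, where Theorems A and B apply. In these classes the semi-commutant is either identically zero (only in the trivial cases, by our main theorems) or nonzero but explicitly computable from the symbol expansions; testing its action against the monomials $z^{l}$ produces both upper and, where possible, lower bounds on its operator norm. In parallel, I would compute $\widetilde{|g|^{2}}(z)-|\widetilde g(z)|^{2}$ and its counterpart for $f$ by expanding $K_m(z,w)=\sum_{k\geq 0}\tfrac{m!}{(k+m)!}(z\overline w)^{k}$ and integrating term-by-term, so that the growth of $D(f,g)$ at infinity becomes explicit.

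The main obstacle will be the absence of a Weyl-type translation on $F^{2,m}$. On $F^{2}$ it turns Berezin transforms into Gaussian convolutions and yields immediate asymptotic control; on $F^{2,m}$ the factor $|w|^{2m}$ together with the polynomial correction $q_{m}$ in the reproducing kernel preclude any such clean formula. I would bypass the translation entirely, working directly from the series of $K_m$ to extract matching two-sided estimates for both the Berezin transform of the Hankel product and each factor of $D(f,g)$. The final step is to invoke Theorem B: the rigidity it imposes on the semi-commutant inside $\mathcal{A}_{1}$ should prevent the cancellation that underlay the $F^{2}$ counterexample, so that the asymptotics conspire to produce a bounded Hankel product together with an unbounded $D(f,g)$. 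This is the step I expect to be the most delicate, and it is precisely where the ``fundamental difference'' advertised in the abstract becomes concrete.
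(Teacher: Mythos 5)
Your opening observation, on which the rest of the plan leans, is false. On \emph{any} reproducing kernel Hilbert space, $T_{\overline{g}}K_{z}=\overline{g(z)}K_{z}$ for holomorphic $g$: for every $h$ in the space, $\langle h,T_{\overline{g}}K_{z}\rangle=\langle gh,K_{z}\rangle=g(z)h(z)=\langle h,\overline{g(z)}K_{z}\rangle$. This has nothing to do with a Weyl translation. Consequently $\widetilde{g}(z)=g(z)$ on $F^{2,m}$ as well, and
\[
\|H_{\overline{g}}k_{m,z}\|^{2}=\widetilde{|g|^{2}}(z)-|g(z)|^{2}=\widetilde{|g|^{2}}(z)-|\widetilde{g}(z)|^{2},
\]
so $D(f,g)(z)=\|H_{\overline{f}}k_{m,z}\|^{2}\,\|H_{\overline{g}}k_{m,z}\|^{2}$ and, by Cauchy--Schwarz, $\sqrt{D(f,g)(z)}$ still dominates the Berezin transform of $H_{\overline{f}}^{*}H_{\overline{g}}$. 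The ``mismatch'' you build the argument around does not exist, and the conjecture fails on $F^{2,m}$ for the \emph{same} reason and with the \emph{same} witnesses as on $F^{2}$ (the paper takes $f=e^{2\pi\mathrm{i}z}$, $g=e^{z}$), not because of a new geometric pathology.

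Your proposed use of Theorem B is also off target. Theorem B classifies when the semi-commutant is \emph{identically zero}; a counterexample to the conjecture needs a semi-commutant that is nonzero but \emph{bounded}. The paper handles this with a separate direct estimate (its Lemma~\ref{pp4}): writing $(T_{e^{Az}},T_{e^{\overline{Bz}}}]f(z)=e^{Az}S_{1}(z)-S_{2}(z)$ via the kernel expansion and using $e^{A\overline{B}}=1$, one bounds this pointwise by a constant times $|e^{Az}|\,\|f\|_{2,m}$ after Cauchy--Schwarz. So the cancellation $e^{A\overline{B}}=1$ that works on $F^{2}$ survives on $F^{2,m}$; the only new difficulty is the polynomial correction $q_{m}$ in $K_{m}$, and it is absorbed by brute-force estimates, not ruled out by rigidity. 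Nothing in Theorem B ``prevents the cancellation.''

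Finally, you have no mechanism for the required \emph{lower} bound. To get $D(f,g)$ unbounded you must bound $\|H_{\overline{f}}k_{m,z}\|^{2}$ and $\|H_{\overline{g}}k_{m,z}\|^{2}$ from below. The paper invokes the local-oscillation estimate
\[
\|H_{\overline{g}}k_{m,z}\|^{2}\gtrsim \int_{|z-w|<\varepsilon}|g(w)-g(z)|^{2}\,dA(w),
\]
which yields $\|H_{\overline{g}}k_{m,z}\|^{2}\gtrsim|e^{z}|^{2}$ and $\|H_{\overline{f}}k_{m,z}\|^{2}\gtrsim|e^{2\pi\mathrm{i}z}|^{2}$, hence $D(f,g)(z)\gtrsim|e^{(2\pi\mathrm{i}+1)z}|^{2}\to\infty$. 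Expanding $K_{m}$ term by term as you propose gives, at best, upper bounds on each factor of $D(f,g)$; without this lower-bound lemma (or an equivalent), and without the concrete choice of $f$ and $g$, the plan cannot close.
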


Let $\mathbb{N}$ denote the set of integer. We say that $a\in 2 \mathbb{N} \pi \mathrm{i}$ means that
$a= 2n\pi \mathrm{i}$ for any integer $n.$ As an application, we consider the above conjecture on Fock-Sobolev space. Our result here reduces to \cite{MA}.
\begin{F}
The conjecture is false on $F^{2,m}$. In fact, there are functions $f=e^{Az}$ and $g(z)=e^{Bz}$ with $A\overline{B}\in2 \mathbb{N} \pi \mathrm{i}/\{0\} $ such that  $(T_f,T_{\overline{g}}]=H_{\overline{f}}^{*} H_{\overline{g}}$ is bounded while $D(f,g)$ is unbounded on $\mathbb{C}$.
\end{F}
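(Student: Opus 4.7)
The proof has two independent parts. First, any pair $(A,B)$ with $A\overline{B}\in 2\pi i\mathbb{Z}\setminus\{0\}$ forces $A+B\neq 0$, for otherwise $A\overline{B}=-|A|^2$ would be real. Pick any such pair, set $f(z)=e^{Az}$ and $g(z)=e^{Bz}$, and verify separately that $(T_f,T_{\overline{g}}]$ is bounded on $F^{2,m}$ and that $D(f,g)$ is unbounded on $\mathbb{C}$.

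For boundedness, the strategy is to reduce to the Fock space $F^2$, where Theorem A makes the semi-commutant vanish. Realize $F^{2,m}$ as a subspace of $F^2$ via the isometry $U:F^{2,m}\to F^2$, $Uh(z)=z^mh(z)/\sqrt{m!}$, whose image is $z^m F^2$---the orthogonal complement in $F^2$ of the polynomials of degree less than $m$---and let $Q_m$ denote the orthogonal projection of $F^2$ onto $z^m F^2$. Using $K_m(z,w)=m!(e^{z\overline{w}}-q_m(z\overline{w}))/(z\overline{w})^m$, a direct computation yields $z^m P_m\psi=Q_m P_F(w^m\psi)$, where $P_F$ is the Fock projection. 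This translates into the unitary equivalence $U T_\varphi^{(m)} U^{-1}=Q_m T_\varphi^F\big|_{z^m F^2}$: every Fock-Sobolev Toeplitz operator is the compression of its Fock counterpart to $z^m F^2$. Writing $P_{<m}=I-Q_m$ and expanding,
\[
U(T_f,T_{\overline{g}}]^{(m)} U^{-1}=Q_m(T_f^F,T_{\overline{g}}^F]\big|_{z^m F^2}+Q_m T_f^F P_{<m} T_{\overline{g}}^F\big|_{z^m F^2}.
\]
Because $f,g\in\mathcal{A}_1$ with $A\overline{B}\in 2\pi i\mathbb{Z}$, Theorem A gives $(T_f^F,T_{\overline{g}}^F]=0$ on $F^2$, killing the first summand. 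The remaining operator factors through the range of $P_{<m}$, which has dimension $m$, so the Fock-Sobolev semi-commutant has rank at most $m$ and is therefore bounded (even compact).

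For the unboundedness of $D(f,g)$, use the identity
\[
\widetilde{|\varphi|^2}(z)-|\widetilde{\varphi}(z)|^2=\|(\varphi-\widetilde{\varphi}(z))k_{m,z}\|^2,
\]
together with $\widetilde{\varphi}(z)=\varphi(z)$ for $\varphi=e^{Cz}$; the latter follows because $\varphi K_{m,z}\in F^{2,m}$ (a routine growth estimate against the weight $|w|^{2m}e^{-|w|^2}$) and the reproducing property then applies. The asymptotic $K_m(z,z)\sim m!\,e^{|z|^2}/|z|^{2m}$ as $|z|\to\infty$ shows that the probability density $|k_{m,z}(w)|^2 d\mu_m(w)$ concentrates at $w=z$ and approaches the Fock Gaussian density $e^{-|w-z|^2}dA(w)/\pi$ locally uniformly. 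Writing $|f(w)|^2=|f(z)|^2 e^{2\mathrm{Re}(A(w-z))}$ and applying the standard Gaussian identity $\int|e^{Au}-1|^2 e^{-|u|^2}dA(u)/\pi=e^{|A|^2}-1$ yields
\[
\widetilde{|f|^2}(z)-|f(z)|^2=(e^{|A|^2}-1+o(1))|f(z)|^2,\qquad |z|\to\infty,
\]
and analogously for $g$. Multiplying,
\[
D(f,g)(z)=(e^{|A|^2}-1)(e^{|B|^2}-1)(1+o(1))\,e^{2\mathrm{Re}((A+B)z)},
\]
which is unbounded along the ray $z=r\overline{A+B}/|A+B|$, $r\to\infty$, since $A+B\neq 0$.

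The main obstacle will be the asymptotic analysis of the normalized Fock-Sobolev kernel: the polynomial correction $q_m(z\overline{w})$ must be tracked carefully enough to ensure that the Fock Gaussian limit holds with sufficient uniformity to extract the leading coefficient $e^{|A|^2}-1$. The boundedness argument, by contrast, is structurally clean: the rank-$m$ projection $P_{<m}$, nontrivial precisely when $m\geq 1$, is the exact mechanism that converts the identically vanishing Fock semi-commutant into a nonzero but finite-rank operator on $F^{2,m}$, thereby producing the bounded $(T_f,T_{\overline{g}}]$ paired with an unbounded $D(f,g)$ required to disprove the Conjecture.
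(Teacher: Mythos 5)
Your argument reaches the right conclusion, but both halves diverge from the paper's route, and one half is left incomplete.

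For boundedness, your reduction via the isometry $U h = z^m h/\sqrt{m!}$ and the identity $U T_\varphi^{(m)} U^{-1}=Q_m T_\varphi^F\big|_{z^m F^2}$ is correct and conceptually cleaner than what the paper does: the paper's Lemma \ref{pp4} computes $T_{e^{\overline{Bz}}}f$ and $T_{e^{Az+\overline{Bz}}}f$ explicitly, isolates the ``defect'' $e^{Az}S_1-S_2$, and bounds it directly by Cauchy--Schwarz; your compression picture shows in one line that after the Fock semi-commutant dies (Theorem~\ref{A}), the entire obstruction is the term $Q_m T_f^F P_{<m} T_{\overline{g}}^F$ factoring through the $m$-dimensional space $P_{<m}F^2$. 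One caution: finite rank does not by itself give boundedness of an a priori unbounded composition; you must check that $h\mapsto P_{<m}T_{\overline{g}}^F h$ is bounded from $z^mF^2$ into $P_{<m}F^2$ (this reduces to $z^k g\in F^2$ for $k<m$, which holds for $g=e^{Bz}$) and that $T_f^F$ maps $P_{<m}F^2$ into $F^2$. These are easy for exponential symbols, but the step should be stated.

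For unboundedness, the paper does not carry out the Gaussian asymptotics you sketch; it instead invokes Lemma~2.4 of \cite{Qin1}, which gives the clean sub-mean-value lower bound
\[
\widetilde{|f|^2}(z)-|\widetilde f(z)|^2 \gtrsim \int_{|w-z|<\varepsilon}|f(w)-f(z)|^2\,dA(w)
\]
for large $|z|$, and then reads off $\gtrsim |e^{Az}|^2$ immediately; no analysis of $q_m$ is needed. Your alternative -- extracting the leading coefficient $e^{|A|^2}-1$ from the Fock--Gaussian limit of $|k_{m,z}|^2\,d\mu_m$ -- is plausible (the polynomial correction terms contribute only $O(e^{|z|^2/2})$, cf.\ Remark~\ref{Re1}, hence $o(1)$ after normalization), but as you yourself flag, you have not actually carried the uniformity argument through. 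That is a genuine gap in your write-up; the paper's route sidesteps it entirely by citing an off-the-shelf lower bound. If you want the stronger asymptotic identity, you would need to verify that the error terms are $o(|f(z)|^2)$ (not merely $o(1)$) along the ray in question, which does hold here because the corrections are relatively $O(|z|^{2(m-1)}e^{-|z|^2})$ and $O(e^{-|z|^2/2}/|f(z)|^2)$, both vanishing.
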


 We remark that Theorem B and Theorem C can be extend to the Fock-Sobolev spaces on $\mathbb{C}^n$. Throughout the paper, $m$ will be a fixed nonnegative integer. We say that $a\lesssim b$ if there is a constant $c$ independent of $a$ and $b$ such that $a\leq cb$, where $a$ and $b$ are nonnegative quantities. Similarly, we say $a\simeq b$ if $a\lesssim b$ and $b\gtrsim a$.

\section{The symbol space $\mathfrak{D}$}
The problem to be studied in this section is to determine the function $f,g\in\mathfrak{D}$ for which
the $(T_f,T_{\overline{g}}]=0$ on Fock-Sobolev spaces. We begin with the  special case.
\begin{lemma}\label{LL6}
Suppose $p$ is a holomorphic polynomial and $f$ is a holomorphic function. Then the following statements are equivalent on $F^{2,m}$.\newline
(a) $(T_{f},T_{\overline{p}}]=0$.\newline
(b) $\widetilde{f\overline{p}}=f\overline{p}$.\newline
(c) one of $p$ and $f$ is a constant.
\end{lemma}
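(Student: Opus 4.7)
The plan is to establish the cycle $(c) \Rightarrow (a) \Rightarrow (b) \Rightarrow (c)$, with the bulk of the work in the last step. The implication $(c) \Rightarrow (a)$ is immediate: if $p$ is the constant $c$, then $T_{\bar p} = \bar c I$ and $T_{f \bar p} = \bar c T_f = T_f T_{\bar p}$; the case of $f$ constant is symmetric.

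The engine for the other directions is the eigenvector identity
$$T_{\bar p} K_m(\cdot, w) \;=\; \overline{p(w)} \, K_m(\cdot, w),$$
which I would verify either by a direct Taylor expansion of both sides using $P(w^n \bar w^k) = \frac{(n+m)!}{(n-k+m)!} z^{n-k}$ for $n \geq k$, or by the one-line adjoint calculation $\langle T_{\bar p} K_m(\cdot, w), h\rangle = \langle K_m(\cdot, w), p h\rangle = \overline{p(w)}\, \overline{h(w)} = \overline{p(w)} \langle K_m(\cdot, w), h\rangle$ for polynomial $h$, extended by density. Combined with $\widetilde{T_f}(z) = \langle f k_{m,z}, k_{m,z}\rangle = f(z)$, this gives $\widetilde{T_f T_{\bar p}}(z) = \overline{p(z)} f(z)$, so $(a) \Rightarrow (b)$. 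For the converse $(b) \Rightarrow (a)$ I would put $G(z, w) = (T_{f \bar p} K_m(\cdot, w))(z) - \overline{p(w)} f(z) K_m(z, w)$; the eigenvector identity together with $T_f K_m(\cdot, w) = f K_m(\cdot, w)$ identifies the subtracted term with $(T_f T_{\bar p} K_m(\cdot, w))(z)$. Now $G$ is holomorphic in $z$ and antiholomorphic in $w$, and $G(z, z) = K_m(z, z)[\widetilde{f \bar p}(z) - f(z) \overline{p(z)}] = 0$ by (b); since $\{(z, z) : z \in \mathbb{C}\}$ is a totally real submanifold of $\mathbb{C}^2$ (expand $G$ as a power series in $(z, \bar w)$ and use that $\sum a_{ij} z^i \bar z^j \equiv 0$ forces every $a_{ij} = 0$), $G \equiv 0$, so the operators agree on every reproducing kernel and therefore on the dense set $\mathcal{D}$.

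It remains to prove $(a) \Rightarrow (c)$. Write $p(z) = \sum_{k=0}^{d} c_k z^k$ with $c_d \neq 0$ and $d \geq 1$, and $f(z) = \sum_{n \geq 0} a_n z^n$. Using the monomial formula above, one writes $T_{\bar p}(z^n)$, $T_f T_{\bar p}(z^n) = f(z) T_{\bar p}(z^n)$, and $T_{f \bar p}(z^n)$ as explicit series in $z$, and the plan is to match coefficients of $z^M$ in $T_{f \bar p}(z^n) = T_f T_{\bar p}(z^n)$ in two regimes. For $0 \leq n \leq d - 1$, equating the constant terms yields
$$\sum_{k=n+1}^{d} a_{k-n} \bar c_k \, \frac{(k+m)!}{m!} \;=\; 0,$$
and descending induction on $n$ from $n = d - 1$ down to $n = 0$ forces $a_1 = a_2 = \cdots = a_d = 0$. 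For $n \geq d$ and $l \geq 0$, equating the coefficients of $z^{n+l}$ produces a polynomial identity in $n$ of degree at most $d - 1$; computing its coefficient of $n^{d-1}$ gives $a_{l+d} \bar c_d \cdot d(l+d)$, which forces $a_n = 0$ for every $n \geq d$. Together these show that $f \equiv a_0$ is constant.

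The main obstacle is the coefficient matching in the final step: both regimes are essential, since the low-$n$ analysis never reaches $a_n$ for $n > d$ while the high-$n$ analysis is silent about $a_1, \ldots, a_{d-1}$, and the high-$n$ case requires carefully tracking the polynomial degree in $n$ so that only the desired leading coefficient survives to give the clean conclusion $a_{l+d} = 0$.
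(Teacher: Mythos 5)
Your proof is correct and takes a genuinely different route from the paper's. For $(a)\Leftrightarrow(b)$ the paper simply cites Theorem~\ref{B} (i.e., the main theorem of \cite{Qin}), whereas you give a self-contained argument via the eigenvector identity $T_{\overline p}K_m(\cdot,w)=\overline{p(w)}K_m(\cdot,w)$ together with the standard holomorphic/antiholomorphic extension of the Berezin identity off the diagonal. The more substantive difference is in $(a)\Rightarrow(c)$: the paper applies the operator equation only to the single test function $1$, reducing to $T_{\overline p-\overline{p(0)}}f=0$, and then argues by cases according to whether $f$ is a polynomial (delegating to Theorem~9 of \cite{Qin}) or not (asserting that the resulting linear relations $\sum_i \overline{a_i}\,b_{\lambda+i}\frac{(\lambda+i+m)!}{(\lambda+m)!}=0$ admit an $N_1\times N_1$ minor with nonzero determinant). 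You instead apply $(T_{f\overline p}-T_fT_{\overline p})z^n=0$ for every $n$, match coefficients, and run two complementary regimes: $0\le n\le d-1$ at $z^0$ gives $a_1,\dots,a_d=0$ by descending induction, and $n\ge d$ at $z^{n+l}$ makes the relation a polynomial of degree $\le d-1$ in $n$ whose $n^{d-1}$-coefficient $d(l+d)\overline{c_d}\,a_{l+d}$ must vanish, killing $a_n$ for $n\ge d$. Your route avoids the case split and uses strictly more of the hypothesis. This is not merely cosmetic: the single condition $T_{\overline p-\overline{p(0)}}f=0$ does have nonconstant solutions with $f$ entire and nonpolynomial — e.g.\ $p(z)=z-z^2$ and $f(z)=\sum_{\mu\ge1}z^\mu/(\mu+m)!$, for which every row of the matrix $B$ is a scalar multiple of $(1,1)$ and all the minors in question vanish identically — so the paper's claim that nonpolynomiality of $f$ yields a choice of $\lambda_1,\dots,\lambda_{N_1}$ with $\det B\neq0$ is not justified in general, while your coefficient matching, which draws on the operator equation applied to $z^n$ for all $n\ge0$ rather than only $n=0$, sidesteps the issue entirely.
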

\begin{proof}
 The equivalence $(a) \Leftrightarrow (b)$ holds by Theorem \ref{B}. The implication $(c)\Rightarrow (a)$ is clear. To prove $(a)\Rightarrow(c)$.
Since $(T_{f},T_{\overline{p}}]=0$, then $T_{f \overline{p}}=T_{f}T_{\overline{p}}.$ One can see that
$$T_{f(z) \overline{p(z)}}1=T_{f(z)}T_{\overline{p(z)}}1=\overline{p(0)}f(z).$$
It follows that
$$T_{ \overline{p(z)}}f(z)=\overline{p(0)}f(z).$$
This shows that  \begin{equation}\label{E43}
T_{ \overline{p(z)}-\overline{p(0)}}f(z)=0.
\end{equation}

Let $$p(z)=\sum_{i=0}^{N_1}a_iz^i,\quad f(z)=\sum_{j=0}^\infty b_l z^l,$$
where $N_1$ is a finite positive integer. Now, we assume $f$ is not a polynomial. From (\ref{E43}) we have
\begin{equation}\label{E7}
0=\sum_{i=1}^{N_1} \overline{a_i} \sum_{l=i}^\infty b_l \frac{(l+m)!}{(l-i+m)!}z^{l-i}.
\end{equation}
From (\ref{E7}), we see that
$$0=\sum_{i=1}^{N_1} \overline{a_i} b_{\lambda+i} \frac{(\lambda+i+m)!}{(\lambda+m)!} $$ for all
$\lambda\geq0.$ Since $f$ is not a polynomial, then are distinct numbers
$$\lambda_1,\cdots, \lambda_{N_1}$$ so that
 the determinant of  the matrix $B$ is non-zero. Here
$$B=\left(
  \begin{array}{cccc}
    b_{\lambda_1+1} \frac{(\lambda_1+1+m)!}{(\lambda_1+m)!} & b_{\lambda_1+2}  \frac{(\lambda_1+2+m)!}{(\lambda_1+m)!} & \cdots & b_{\lambda_1+N_1}\frac{(\lambda_1+N_1+m)!}{(\lambda_1+m)!} \\
   b_{\lambda_2+1} \frac{(\lambda_2+1+m)!}{(\lambda_2+m)!}& b_{\lambda_2+2} \frac{(\lambda_2+2+m)!}{(\lambda_2+m)!} & \cdots & b_{\lambda_2+N_1} \frac{(\lambda_2+N_1+m)!}{(\lambda_2+m)!}\\
    \vdots & \vdots & \vdots & \vdots \\
    b_{\lambda_{N_1}+1}\frac{(\lambda_{N_1}+1+m)!}{(\lambda_{N_1}+m)!} & \cdots & \cdots & b_{\lambda_{N_1}+N_1} \frac{(\lambda_{N_1}+N_1+m)!}{(\lambda_{N_1}+m)!}\\
  \end{array}
\right).$$
This can be done, because $b_{\lambda_k+i}$ is the coefficient of $z^{\lambda_k+i}.$ Moreover,
 $BX=0,$ where $X=(a_1,\cdots,a_{N_1})^{T}$. Since $|B|\neq0,$ then we have $a_i=0$ for  all $1\leq i \leq  N_2.$ This shows that $p$ is a constant if $f$ isn't a polynomial.

In fact, one of the $p$ and $f$ is a constant function if $f$ is a  polynomial by Theorem 9 in \cite{Qin}.  This is the desired result.
\end{proof}
\begin{eg}
First, we find ${\lambda_1}$ such that $b_{\lambda_1+1}\neq0$, and then we choose ${\lambda_2}$  that satisfies ${\lambda_1}\neq {\lambda_2}$, $b_{\lambda_2+1}=0$ and $b_{\lambda_2+2}\neq0$. By this way,
we can obtain the special upper-triangular matrix
$$B=\left(
  \begin{array}{cccc}
    b_{\lambda_1+1} \frac{(\lambda_1+1+m)!}{(\lambda_1+m)!} & b_{\lambda_1+2}  \frac{(\lambda_1+2+m)!}{(\lambda_1+m)!} & \cdots & b_{\lambda_1+N_1}\frac{(\lambda_1+N_1+m)!}{(\lambda_1+m)!} \\
  0& b_{\lambda_2+2} \frac{(\lambda_2+2+m)!}{(\lambda_2+m)!} & \cdots & b_{\lambda_2+N_1} \frac{(\lambda_2+N_1+m)!}{(\lambda_2+m)!}\\
    \vdots & \vdots & \vdots & \vdots \\0& \cdots & \cdots & b_{\lambda_{N_1}+N_1} \frac{(\lambda_{N_1}+N_1+m)!}{(\lambda_{N_1}+m)!}\\
  \end{array}
\right).$$
\end{eg}
 The above Lemma shows the Berezin transform of a polynomial is again a polynomial. The following lemma has been considered in \cite{Qin} by direct calculation. Now, we prove it by the Berezin Transform.
\begin{lemma}\label{LL3}
Suppose $f(z)=e^{Az}$ and $g(z)=e^{Bz},$ then $T_{f\overline{g}}=T_{f}T_{\overline{g}}$ if and only if $A\overline{B}=0$.
\end{lemma}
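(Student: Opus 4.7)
My plan is to follow the Berezin-transform route announced in the remark preceding the lemma. Sufficiency is immediate: if $A\overline{B}=0$ then $f\equiv 1$ or $g\equiv 1$, and the operator identity is trivial in both cases. For the necessity, I translate $T_{f}T_{\overline{g}}=T_{f\overline{g}}$ into a Berezin-transform identity. Using $T_{\overline{g}}k_{m,z}=\overline{g(z)}k_{m,z}$ for entire $g$ (a direct consequence of the reproducing property applied to $T_{\overline{g}}^{*}=T_{g}$), together with $\widetilde{T_{f}}(z)=f(z)$ for entire $f$, one gets $\widetilde{T_{f}T_{\overline{g}}}(z)=f(z)\overline{g(z)}$, so the hypothesis forces $\widetilde{f\overline{g}}(z)=e^{Az+\overline{B}\overline{z}}$ for every $z\in\mathbb{C}$. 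Equivalently and more concretely, applying the operator identity to $1\in F^{2,m}$ and noting that $T_{\overline{g}}1=\overline{g(0)}=1$ (because $P(\overline{w}^{k})=0$ for $k\geq 1$), I obtain the function identity $P(e^{Aw+\overline{B}\overline{w}})(z)=e^{Az}$, which is the form I will work with.

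Next I will compute the projection explicitly. Expanding the exponential and using $P(w^{j}\overline{w}^{k})(z)=\frac{(j+m)!}{(j-k+m)!}z^{j-k}$ for $j\geq k$ (and $0$ otherwise), the reindexing $n=j-k$ yields
$$P(e^{Aw+\overline{B}\overline{w}})(z)=\sum_{n=0}^{\infty}\frac{A^{n}z^{n}}{(n+m)!}\sum_{k=0}^{\infty}\frac{(n+k+m)!}{(n+k)!\,k!}\,\alpha^{k},\qquad \alpha:=A\overline{B}.$$
Assuming $A\neq 0$ (otherwise $f\equiv 1$ and we are done), matching the coefficient of $z^{n}$ with that of $e^{Az}$ produces, for every $n\geq 0$, the scalar identity
$$\sum_{k=0}^{\infty}\frac{(n+k+m)!}{(n+k)!\,k!}\,\alpha^{k}=\frac{(n+m)!}{n!}.$$

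The decisive step is algebraic. I would write $(n+k+1)(n+k+2)\cdots(n+k+m)=\sum_{j=0}^{m}c_{j}(k)\,n^{j}$ with $c_{m}\equiv 1$ and $c_{m-1}(k)=mk+\frac{m(m+1)}{2}$, and interchange the order of summation. Since $\sum_{k}\frac{k^{r}\alpha^{k}}{k!}$ is $e^{\alpha}$ times a polynomial in $\alpha$, the left-hand side above becomes
$$e^{\alpha}\Bigl(n^{m}+\bigl(m\alpha+\tfrac{m(m+1)}{2}\bigr)n^{m-1}+O(n^{m-2})\Bigr),$$
which is a polynomial in $n$ of degree $m$, while the right-hand side equals $n^{m}+\tfrac{m(m+1)}{2}n^{m-1}+O(n^{m-2})$. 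Comparing the $n^{m}$ coefficients forces $e^{\alpha}=1$, and then comparing the $n^{m-1}$ coefficients forces $m\alpha=0$; since $m\geq 1$, this gives $\alpha=A\overline{B}=0$. I expect the main obstacle to lie exactly in this last step: verifying that the infinite sum on the left is a polynomial in $n$ of exact degree $m$ and reading off its top two coefficients in closed form. Everything else is a routine reproducing-kernel computation.
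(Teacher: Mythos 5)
Your proof is correct, but it takes a genuinely different route from the paper's. The paper computes the full Berezin transform $\widetilde{f\overline{g}}(z)$, equates it to $f(z)\overline{g(z)}$, and then appeals to Lemma~2.9 of \cite{Bauer1} to polarize the resulting real-analytic identity in $z,\overline{z}$ into a two-variable holomorphic identity in $(z,\overline{\eta})$; the conclusion $A\overline{B}=0$ is then forced by comparing the shape of the identity against the explicit kernel $K_m(z,\eta)$ and using linear independence of $\{e^{-Az},1\}$ over the polynomials. You instead test the operator identity on the single function $1$, which immediately reduces the problem to the scalar coefficient identity $\sum_{k\geq 0}\frac{(n+k+m)!}{(n+k)!\,k!}\,\alpha^k=\frac{(n+m)!}{n!}$ for all $n\geq 0$, and you extract both $e^\alpha=1$ and $\alpha=0$ by reading off the top two coefficients of what is a degree-$m$ polynomial identity in $n$. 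Your computation is more elementary and self-contained (no polarization lemma, no kernel-shape argument), and it makes transparent exactly where $m\geq 1$ enters; the paper's Berezin-transform machinery, on the other hand, is what gets reused in Proposition~\ref{pp2} for linear combinations, so its extra overhead pays off there. One small point worth stating explicitly in your writeup: your argument proves only the necessity half via a single test function (which is logically sufficient for a necessary condition), and the lemma as stated silently requires $m\geq1$ — for $m=0$ the conclusion is false (cf.\ Theorem~\ref{A}), and your final step correctly flags that $m\alpha=0$ only forces $\alpha=0$ when $m\neq0$.
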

\begin{proof}
If $AB=0$, then $T_{f\overline{g}}=T_{f}T_{\overline{g}}$. Conversely,
assume $AB\neq0,$ we define $$Q(z,\overline{z})=\frac{1}{m!\pi}\int_{\mathbb{C}} e^{Aw+\overline{B} \overline{w}}|K_m(z,w)|^2|w|^{2m} e^{-|w|^2}dA(w).$$
Then
\begin{align*}
Q(z,\overline{z})&=\frac{1}{m!\pi}\int_{\mathbb{C}} e^{Aw+\overline{B} \overline{w}}|K_m(z,w)|^2|w|^{2m} e^{-|w|^2}dA(w)\\
&=\frac{m!}{\pi|z|^{2m}}\int_{\mathbb{C}} e^{Aw+\overline{B} \overline{w}}[e^{\overline{z}w}-q_m(\overline{z}w)][e^{\overline{w}z}-q_m(\overline{w}z)] e^{-|w|^2}dA(w)\\
&=\frac{m!}{\pi|z|^{2m}}\int_{\mathbb{C}} e^{Aw+\overline{B} \overline{w}}
\bigg\{e^{\overline{z}w}e^{\overline{w}z}- q_m(\overline{w}z)e^{\overline{z}w}\\
&\quad-q_m(\overline{z}w)
e^{\overline{w}z}+q_m(\overline{z}w)q_m(\overline{w}z)\bigg\}e^{-|w|^2}dA(w).
\mathfrak{}\end{align*}
Define $$p(z,\overline{z})=\frac{1}{\pi}\int_{\mathbb{C}} e^{Aw+\overline{B} \overline{w}}q_m(\overline{z}w)q_m(\overline{w}z)e^{-|w|^2}dA(w),$$
note that $p$ is polynomial in $z$ and $\overline{z}$. In fact, if $A=0,$ then
\begin{align*}
&\quad\frac{1}{\pi}\int_{\mathbb{C}} e^{\overline{B} \overline{w}}q_m(\overline{z}w)q_m(\overline{w}z)e^{-|w|^2}dA(w)\\
&=\sum_{k=0}^{m-1} \sum_{l=0}^{k}\frac{1}{\pi}\int_{\mathbb{C}}
\frac{(\overline{z}w)^k}{k!}\frac{\overline{B}^lz^{k-l}\overline{w}^{k}}{l!(k-l)!}e^{-|w|^2}dA(w)\\
&=\sum_{k=0}^{m-1} \sum_{l=0}^{k} \frac{\overline{z}^k\overline{B}^lz^{k-l}}{l!(k-l)!}\\
&=\sum_{k=0}^{m-1}  \frac{\overline{z}^k}{k!}\sum_{l=0}^{k} \frac{k!\overline{B}^lz^{k-l}}{l!(k-l)!}\\
&=q_m[\overline{z}(\overline{B}+z)].
\end{align*}

By a simple computation,
\begin{equation}\label{R1}
\left\{
  \begin{array}{ll}
   \frac{1}{\pi} \int_{\mathbb{C}} e^{Aw+\overline{B} \overline{w}}
e^{\overline{z}w+\overline{w}z}e^{-|w|^2}dA(w)=e^{(A+\overline{z})(\overline{B}+z)}; \\
   \frac{1}{\pi} \int_{\mathbb{C}} e^{Aw+\overline{B} \overline{w}}
q_m(\overline{w}z)e^{\overline{z}w}e^{-|w|^2}dA(w)=e^{\overline{B}(A+\overline{z})}
q_m(zA+|z|^2) ;\\
   \frac{1}{\pi} \int_{\mathbb{C}} e^{Aw+\overline{B} \overline{w}}
q_m(\overline{z}w)
e^{\overline{w}z}e^{-|w|^2}dA(w)=e^{A(\overline{B}+z)}
q_m(\overline{z}\overline{B}+|z|^2).
  \end{array}
\right.
\end{equation}
Since $T_{f\overline{g}}=T_{f}T_{\overline{g}}$, then $\widetilde{f\overline{g}}=f\overline{g}.$ It follows that  \begin{equation}\label{E20}
m!\frac{e^{|z|^2}-q_m(|z|^2)}{|z|^{2m}}e^{Az+\overline{B}\overline{z}}
=Q(z,\overline{z}).
\end{equation} By (\ref{E20}) and Lemma 2.9 in \cite{Bauer1},
$$m!\frac{e^{z\overline{\eta}}-q_m(z\overline{\eta})}{(z\overline{\eta})^{m}}
=e^{-Az-\overline{B}
\overline{\eta}}Q(z,\overline{\eta})$$ for all $z,\eta\in \mathbb{C}.$
By (\ref{R1}),
\begin{align}\label{E38}
 K_m(z,\eta)&=m!\frac{e^{z\overline{\eta}}-q_m(z\overline{\eta})}{(z\overline{\eta})^{m}}\nonumber\\
&=\frac{m!}{(z\overline{\eta})^{m}}\bigg\{e^{A\overline{B}+z\overline{\eta}}
-e^{A\overline{B}-Az}q_m(zA+z\overline{\eta})
\\
&\quad-e^{A\overline{B}-\overline{B}\overline{\eta}}
q_m(\overline{B}\overline{\eta}+z\overline{\eta})+e^{-Az-\overline{B}
\overline{\eta}}p(z,\overline{\eta})\bigg\}\nonumber,
\end{align}
which shows that $e^{A\overline{B}}=1$. It follows from (\ref{E38}) that
\begin{align*}
 K_m(z,\eta)&=\frac{m!}{(z\overline{\eta})^{m}}\bigg\{e^{z\overline{\eta}}
-e^{-Az}q_m(zA+z\overline{\eta})
\\
&\quad-e^{-\overline{B}\overline{\eta}}
q_m(\overline{B}\overline{\eta}+z\overline{\eta})+e^{-Az-\overline{B}
\overline{\eta}}p(z,\overline{\eta})\bigg\}.
\end{align*}

Now, we set $A\overline{B}\neq0$, then
$$e^{-Az}[e^{-\overline{B}
\overline{\eta}}p(z,\overline{\eta})-q_m(zA+z\overline{\eta})]$$  is not a polynomial in $z$.
We also have
$$e^{-\overline{B_j}
\overline{\eta}}[e^{-Az}p(z,\overline{\eta})-q_m(\overline{B}\overline{\eta}+z\overline{\eta})]$$ is not a polynomial in  $\overline{\eta}$. In fact, $\{e^{-Az},1\}$ is linearly independent over the polynomials. This implies that
$$q_m(z\overline{\eta})\neq e^{-Az}\bigg[q_m(zA+z\overline{\eta})
+e^{-\overline{B}
\overline{\eta}}p(z,\overline{\eta})\bigg]-e^{-\overline{B}\overline{\eta}}
q_m(\overline{B}\overline{\eta}+z\overline{\eta}),$$ which contradicts the form of $K_m(z,\eta).$ We thus have  $A\overline{B}=0$. This complete the proof.
\end{proof}

Recall that $\mathcal{D}_1$ is the set of all finite linear combinations of kernel function of Fock space. We remark the $\sum_{i,j}e^{A_iz+\overline{B_jz}}$ with $A_i\overline{B_j}\in 2\mathbb{N}\pi \mathrm{i}$  is fixed points of Berezin transform on Fock spaces. However, it is false on Fock-Sobolev space.
\begin{prop}\label{pp2}
Suppose $f,g\in \mathcal{D}_1$ and $m$ is a positive integer.
Then the following statements are equivalent:\newline
(a) $\widetilde{f\overline{g}}=f\overline{g}$ on $F^{2,m}$.\newline
(b) $\widetilde{e^{A_iz+\overline{B_j} \overline{z}}}=e^{A_iz+\overline{B_j} \overline{z}}$  for each $i$ and $j$.\newline
(c) one of $f$ and $g$ is a constant.
\end{prop}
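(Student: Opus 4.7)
The plan is to establish the cycle $(c)\Rightarrow(b)\Rightarrow(a)\Rightarrow(c)$. Write $f(z)=\sum_{i=1}^N a_i e^{A_i z}$ and $g(z)=\sum_{j=1}^M b_j e^{B_j z}$ with the $\{A_i\}$ and $\{B_j\}$ distinct. The implication $(b)\Rightarrow(a)$ is immediate from linearity of the Berezin transform applied to $f\overline{g}=\sum_{i,j}a_i\overline{b_j}e^{A_i z+\overline{B_j}\overline{z}}$. For $(c)\Rightarrow(b)$, if $g$ is constant then its expansion contains only the single index with $B_j=0$, so every $e^{A_i z+\overline{B_j}\overline{z}}$ reduces to the holomorphic function $e^{A_i z}$, which is a fixed point of the Berezin transform on $F^{2,m}$; the case of constant $f$ is symmetric.

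The core of the argument is $(a)\Rightarrow(c)$, for which I would reuse the explicit Berezin calculation carried out inside the proof of Lemma \ref{LL3}. Applying the integral identities (\ref{R1}) with $(A,B)=(A_i,B_j)$ gives
\begin{align*}
K_m(z,z)\,\widetilde{e^{A_i z+\overline{B_j}\overline{z}}}(z)&=\frac{m!}{|z|^{2m}}\Bigl\{e^{(A_i+\overline{z})(\overline{B_j}+z)}-e^{\overline{B_j}(A_i+\overline{z})}q_m(A_i z+|z|^2)\\
&\quad-e^{A_i(\overline{B_j}+z)}q_m(\overline{B_j}\overline{z}+|z|^2)+p_{ij}(z,\overline{z})\Bigr\},
\end{align*}
with $p_{ij}$ a polynomial in $z,\overline{z}$. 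Combining this with the identity $K_m(z,z)=m!(e^{|z|^2}-q_m(|z|^2))/|z|^{2m}$ and then multiplying the hypothesis $\widetilde{f\overline{g}}=f\overline{g}$ through by $|z|^{2m}/m!$ will convert $(a)$ into
$$\sum_{i,j}a_i\overline{b_j}\,E_{ij}(z,\overline{z})=0,$$
in which each $E_{ij}$ is a polynomial-coefficient linear combination of exponentials of five types, namely $e^{|z|^2+A_i z+\overline{B_j}\overline{z}}$, $e^{A_i z+\overline{B_j}\overline{z}}$, $e^{A_i z}$, $e^{\overline{B_j}\overline{z}}$, and $1$.

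The decisive step will be the classical linear independence of $\{e^{\alpha z+\beta\overline{z}}\}_{(\alpha,\beta)\in\C^2}$ over the polynomials in $z,\overline{z}$. Fix a pair $(i_0,j_0)$ with $A_{i_0}\neq 0$ and $B_{j_0}\neq 0$, and track the coefficient of $e^{A_{i_0}z+\overline{B_{j_0}}\overline{z}}$ in the identity above. Inside $E_{i_0 j_0}$ this coefficient equals $q_m(|z|^2)$, arising from the $-q_m(|z|^2)$ piece of $K_m(z,z)e^{A_{i_0}z+\overline{B_{j_0}}\overline{z}}$; a direct case check shows that the distinctness of the $A_i$ and $B_j$, together with $A_{i_0},B_{j_0}\neq 0$, prevents any of the five exponentials inside any other $E_{ij}$ from coinciding with $e^{A_{i_0}z+\overline{B_{j_0}}\overline{z}}$. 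Hence $a_{i_0}\overline{b_{j_0}}\,q_m(|z|^2)\equiv 0$, and since $q_m(|z|^2)$ is a nonzero polynomial for $m\ge 1$, we deduce $a_{i_0}\overline{b_{j_0}}=0$.

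Thus $a_i\overline{b_j}=0$ whenever both $A_i\neq 0$ and $B_j\neq 0$. If $f$ is non-constant then some $a_{i_0}\neq 0$ has $A_{i_0}\neq 0$; then $b_j=0$ for every $j$ with $B_j\neq 0$, leaving only the unique $j$ with $B_j=0$ in $g$, so $g$ must be constant. The symmetric dichotomy delivers $(c)$ and closes the cycle. The main obstacle I anticipate is precisely the case-analysis in the preceding paragraph: one has to enumerate the five exponential classes carried by each $E_{ij}$ and verify, using distinctness of the $A_i$ and $B_j$ together with $A_{i_0},B_{j_0}\neq 0$, that the class $e^{A_{i_0}z+\overline{B_{j_0}}\overline{z}}$ receives no contribution from any $E_{ij}$ with $(i,j)\neq(i_0,j_0)$.
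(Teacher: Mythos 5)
Your proposal is correct and establishes the equivalence via the cycle $(c)\Rightarrow(b)\Rightarrow(a)\Rightarrow(c)$, whereas the paper proves $(a)\Rightarrow(b)\Rightarrow(c)\Rightarrow(a)$. For the hard implication, you prove $(a)\Rightarrow(c)$ in a single, more direct step by isolating the polynomial coefficient of the exponential $e^{A_{i_0}z+\overline{B_{j_0}}\overline{z}}$ with $A_{i_0},B_{j_0}\neq 0$, which is $\pm a_{i_0}\overline{b_{j_0}}\,q_m(|z|^2)$, and concluding $a_{i_0}\overline{b_{j_0}}=0$ from $q_m(|z|^2)\not\equiv 0$ when $m\geq 1$. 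The paper instead first compares the $e^{|z|^2}$-parts to deduce $e^{A_i\overline{B_j}}=1$, then constructs the auxiliary functions $v_{ij}$ and argues their linear independence to reach the term-by-term identity~(\ref{E37}), which is statement~$(b)$, and finally invokes Lemma~\ref{LL3} to pass from $(b)$ to $(c)$. Your route is thus shorter: it bypasses both the intermediate fixed-point identity and the dependence on Lemma~\ref{LL3}, and it avoids proving $e^{A_i\overline{B_j}}=1$ altogether. One small imprecision to repair: you invoke linear independence of $\{e^{\alpha z+\beta\overline{z}}\}$ over polynomials, but the family you actually need to separate also contains $e^{|z|^2+A_iz+\overline{B_j}\overline{z}}=e^{z\overline{z}+A_iz+\overline{B_j}\overline{z}}$, whose exponent has a quadratic term $z\overline{z}$; you should either state the slightly stronger linear independence fact for exponentials of distinct polynomials in $(z,\overline{z})$, or (as the paper does) first peel off the $e^{|z|^2}$-contributions by a growth comparison before applying the linear-exponent independence.
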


\begin{proof}
Suppose $$f(z)=\sum_{i=1}^N a_i e^{A_iz},\quad g(z)=\sum_{j=1}^M b_j e^{B_j z}.$$
Now assume (a) hold. Let $$Q_{ij}(z,\overline{z})=\frac{1}{m!\pi}\int_{\mathbb{C}} e^{A_iw+\overline{B_j} \overline{w}}|K_m(z,w)|^2|w|^{2m} e^{-|w|^2}dA(w).$$
We define $Q_{ij}(z,\overline{z})=\frac{m!}{|z|^{2m}}\rho_{ij}(z,\overline{z})$, then, by (\ref{E20}),
\begin{equation}\label{E28}
0=\sum_{i}\sum_{j} a_i\overline{b_j}\bigg\{[e^{|z|^2}-q_m(|z|^2)]e^{A_iz+\overline{B_j}\overline{z}}- \rho_{ij}(z,\overline{z})\bigg\}.
\end{equation}
It follows from (\ref{E38}) that
\begin{align}\label{E19}
0&=\sum_{i}\sum_{j}  a_i\overline{b_j} \bigg([e^{|z|^2}-q_m(|z|^2)]e^{A_iz+\overline{B_j}\overline{z}}-e^{(A_i+\overline{z})(\overline{B_j}+z)}
\nonumber\\
&\quad+e^{\overline{B_j}(A_i+\overline{z})}
q_m(zA_i+|z|^2)+e^{A_i(\overline{B_j}+z)}
q_m(\overline{z}\overline{B_j}+|z|^2)-p_{ij}(z,\overline{z})\bigg),
\end{align}
where each $p_{ij}$ is a polynomial as before in Lemma \ref{LL3} .

Without loss of generality, assume $\{A_i\}_{i=1}^{M_1}$ and $\{B_j\}_{j=1}^{M_2}$ are
two finite collections of distinct non-zero complex numbers. Moreover, we also assume $a_ib_j\neq 0$ for all $1\leq i \leq M_1$ and $1\leq j \leq M_2.$ Comparing the "coefficients" of $e^{|z|^2}$ in (\ref{E19}), one can see that
\begin{align*}
0&=\sum_{i}\sum_{j}  a_i\overline{b_j}e^{A_iz+\overline{B_j}\overline{z}}- \sum_{i}\sum_{j}a_i\overline{b_j}e^{A_i \overline{B_j}+A_iz+\overline{B_j}\overline{z}}\\
&= \sum_{i}\sum_{j}  a_i\overline{b_j}e^{A_iz+\overline{B_j}\overline{z}}(1-e^{A_i\overline{B_j}}).
\end{align*}

Note that  $e^{A_iz+\overline{B_j}\overline{z}}$ from a linearly independent set over the polynomials, because $(A_i,B_j)'$s are all distinct.  This shows that $e^{A_i\overline{B_j}}=1$ for all $1\leq i \leq M_1$ and $1\leq j \leq M_2.$ Putting this in (\ref{E19}), (\ref{E19}) becomes
 \begin{align*}\label{E36}
0&=\sum_{i}\sum_{j}  a_i\overline{b_j} \bigg(-q_m(|z|^2)e^{A_iz+\overline{B_j}\overline{z}}
+e^{\overline{B_j}\overline{z}}
q_m(zA_i+|z|^2)\\
&\quad+e^{A_iz}
q_m(\overline{z}\overline{B_j}+|z|^2)-p_{ij}(z,\overline{z})\bigg)\nonumber.
\end{align*}

Clearly, $q_m(|z|^2)>0$ if $m\neq0.$ Note that $m=0$ implies the above  equation always holds, since  $e^{A_j\overline{B_j}}=1$ for each $i$ and $j$. Now, we assume $m\neq0$ and define
$$v_{ij}=e^{A_iz+\overline{B_j}\overline{z}}-\frac{e^{\overline{B_j}\overline{z}}
q_m(zA_i+|z|^2)}{q_m(|z|^2)}-\frac{e^{A_iz}
q_m(\overline{z}\overline{B_j}+|z|^2)}{q_m(|z|^2)}
+\frac{p_{ij}(z,\overline{z})}{q_m(|z|^2)}.$$
Recall that $e^{A_iz+\overline{B_j}\overline{z}}$ form linearly independent set over the polynomials. In fact, for fixed $i,j,k,l$, we have  $\{e^{A_iz+\overline{B_j}\overline{z}}, e^{A_kz},e^{\overline{B_l}\overline{z}},1\}$ is linearly independent over the polynomials. This shows that
$$\sum_{i}\sum_{j}  a_i\overline{b_j} e^{A_iz+\overline{B_j}\overline{z}}\neq \sum_{i}\sum_{j}  a_i\overline{b_j}(v_{ij}- e^{A_iz+\overline{B_j}\overline{z}}).$$
Then $\{v_{ij}\}_{ij}$ is linearly independent.
This fact together with  (\ref{E28}) that
\begin{equation}\label{E37}
0=[e^{|z|^2}-q_m(|z|^2)]e^{A_iz+\overline{B_j}\overline{z}}-\rho_{ij}(z,\overline{z})
\end{equation}
 for each $i$ and $j$. Or equivalently,
\begin{equation}\label{E31}
\widetilde{e^{A_iz+\overline{B_j}\overline{z}}}=e^{A_iz+\overline{B_j}\overline{z}}
\end{equation}
for all $1\leq i \leq M_1$ and $1\leq j \leq M_2$ by (\ref{E37}) and the definition of the Berezin transform. So, (b) holds.

To prove (b)$\Rightarrow$(c). By lemma \ref{LL3} and (\ref{E31}), $A_i \overline{B_j}=0$ for all $1\leq i \leq M_1$ and $1\leq j \leq M_2.$ Assume there is a integer $\kappa$ so that $A_\kappa \overline{B_j}=0$ and $ A_\kappa \neq0,$ then we have
$B_j=0$ for $1\leq j \leq M_2.$ This shows that $g$ is a constant function. Thus, we have one of $f$ and $g$ is a constant function. The implication (c)$\Rightarrow$(a) is clear. This completes the proof.
\end{proof}

According to the proof of Proposition \ref{pp2}, we have that the Berezin transform will be useful if
$f$ and $g$ are \textit{exponential type} functions. In fact, we can rewrite $|K_m(z,w)|^2|w|^{2m}$ by
$$|K_m(z,w)|^2|w|^{2m}=\frac{(m!)^2}{|z|^{2m}}|e^{z\overline{w}}-q_m(z\overline{w})|^2.$$ Then we can almost get the form of $\widetilde{f\overline{g}}$ by the property of the reproducing kernel of Fock spaces if $f$ and $g$ are \textit{exponential type}.
 However,  this method obviously fails if $f(z)=K_m(z,w)$, see the proof of Lemma \ref{LL3}.
To solve this problem, we'll look at the Berezin
transform from a different angle. The idea comes from \cite{Qin}.

For any fixed $l\geq1,$ we define the expression
\begin{equation}\label{E32}
\rho(k)=\prod_{i=1}^l (k+m+i)=C_0+\sum_{i=1}^l C_i\bigg(\prod_{\lambda=0}^{i-1}(k-i)\bigg).
\end{equation}
Clearly,
$C_{l}=1$ and
\begin{equation}\label{EE9}
\rho(l)=\prod_{i=1}^l (l+m+i)=\sum_{i=0}^l C_i\frac{l!}{(l-i)!}.
\end{equation}
\begin{lemma}\label{LL1}
Let $A$ and  $B$ be two non-zero constants. For any fixed $l\geq1$, define the expression
\begin{align*}
C(A,B)= \sum_{k=0}^\infty \frac{m !}{k !} \frac{(k+l+m) !}{ (k+m)!} \overline{A}^{k} \overline{B}^{k} -\sum_{k=0}^{l} \frac{m !}{k !}   \frac{(l+m) !}{(k+m) !} \frac{(l+m) !}{(l-k+m) !} \overline{A}^{k} \overline{B}^{k}.
\end{align*}
Suppose $E=e^{\overline{AB}}=1$, then
\begin{align*}
C(A,B)&=\sum_{k=1}^{l-1} \frac{m !}{k !} \bigg\{\frac{(k+l+m) !}{ (k+m)!}- \frac{(l+m) !}{(k+m) !} \frac{(l+m) !}{(l-k+m) !} \bigg\} \overline{A}^{k} \overline{B}^{k}\\
&\quad -\sum_{i=0}^{l-2} m!C_i \bigg\{\sum_{k=1}^{l-i-1}\frac{1}{k!} \overline{A}^{k}
 \overline{B}^{k}\bigg\}
\overline{A}^{i} \overline{B}^{i}+m!d_l (\overline{AB})^l,
\end{align*}
where the $C_i$ is defined as before (\ref{E32}).
Moreover, $d_l\neq0$ if $m\neq0.$
\end{lemma}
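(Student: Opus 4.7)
The plan is to pass to the single variable $t = \overline{A}\,\overline{B} = \overline{AB}$ and exploit the hypothesis $e^{t} = E = 1$ to collapse the infinite series into a polynomial in $t$. Write $C(A,B) = S_1(t) - S_2(t)$, where
$$S_1(t) = \sum_{k=0}^{\infty} \frac{m!}{k!}\rho(k)\,t^k, \qquad \rho(k) = \frac{(k+l+m)!}{(k+m)!},$$
is the infinite series, and $S_2(t) = \sum_{k=0}^{l} \frac{m!}{k!}\frac{(l+m)!}{(k+m)!}\frac{(l+m)!}{(l-k+m)!}t^{k}$ is the truncated sum in the statement.

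The key analytic step is to insert the falling-factorial expansion $\rho(k) = C_0 + \sum_{i=1}^{l} C_i(k)_i$ from (\ref{E32}), with $(k)_i = k(k-1)\cdots(k-i+1)$, into $S_1$, interchange summation, and apply the elementary identity
$$\sum_{k=0}^{\infty} \frac{(k)_i}{k!}\,t^{k} = t^{i}\,e^{t}.$$
Since $e^{t} = 1$, this yields $S_1 = m!\sum_{i=0}^{l} C_i\,t^{i}$, so $C(A,B)$ is a polynomial in $t$ of degree at most $l$, and the proof reduces to a coefficient-by-coefficient comparison with the target expression.

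Coefficients are matched by means of the bookkeeping identity $\rho(k)/k! = \sum_{i=0}^{k} C_i/(k-i)!$, obtained by evaluating (\ref{E32}) at an integer $0 \le k \le l$ and using $(k)_i = k!/(k-i)!$ for $i \le k$. The $k=0$ terms of $S_1$ and $S_2$ both equal $(l+m)!$ and cancel. For $1 \le k \le l-1$, splitting off the $i=k$ term of this identity isolates the diagonal contribution $m!C_k$ from a residual $m!\sum_{i=0}^{k-1} C_i/(k-i)!$; after reindexing $j = k+i$, the residual matches precisely the $t^k$-coefficient produced by the double sum $\sum_{i=0}^{l-2} m!C_i\sum_{k=1}^{l-i-1} t^{k+i}/k!$ in the target, while combining $m!C_k$ with the corresponding $S_2$-term reproduces the bracketed expression $\frac{m!}{k!}\bigl\{\rho(k) - (l+m)!^{2}/[(k+m)!(l-k+m)!]\bigr\}$. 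For $k=l$, only $C_l = 1$ survives from $S_1$, and a direct computation gives $m!d_l = m! - (l+m)!/l!$, hence $d_l = 1 - \binom{l+m}{l}$.

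To conclude the ``moreover'' clause, note that for $l \ge 1$ and $m \ge 1$ one has $\binom{l+m}{l} \ge l+1 \ge 2$, so $d_l \ne 0$ as required. The only real obstacle I anticipate is combinatorial bookkeeping: keeping the reindexing $j = k+i$ consistent with the ranges $0 \le i \le l-2$ and $1 \le k \le l-i-1$, and correctly separating the diagonal term $m!C_k$ from the residual double sum. There are no further analytic subtleties beyond the single application of $E=1$ that collapses $S_1$.
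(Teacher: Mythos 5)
Your proof is correct, and it differs from the paper's in a small but genuine way in how the infinite series is handled. The paper first splits $S_1$ at $k=l$: the finite piece $0\le k\le l$ is combined term-by-term with $S_2$ (giving $D_2$, the $t^l$ contribution $m!(\mathrm{C}_{2l+m}^l-\mathrm{C}_{l+m}^l)$, and the cancelling $k=0$ terms), while the tail $D_1=\sum_{k\ge l+1}$ is the only part where the falling-factorial expansion of $\rho$ and the hypothesis $E=1$ enter; the paper then shows $D_1=-D_3-\sum_{i<l}m!C_i t^l/(l-i)!$ and reads off $d_l$ from (\ref{EE9}). You instead apply the falling-factorial expansion to the \emph{entire} series $S_1$ at once, invoking $\sum_{k\ge 0}(k)_i\,t^k/k!=t^ie^t$ together with $e^t=1$ to collapse $S_1$ into the polynomial $m!\sum_{i=0}^l C_i t^i$, and then match coefficients with the target using the evaluation identity $\rho(k)/k!=\sum_{i\le k}C_i/(k-i)!$. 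The two routes rest on the same key insight and do essentially the same combinatorics, but yours is organized as a single coefficient comparison rather than a tail computation; one consequence is that you obtain $d_l=1-\binom{l+m}{l}$ directly from the $t^l$ terms of $S_1$ and $S_2$ without needing (\ref{EE9}), which the paper uses to evaluate $\sum_{i<l}C_i/(l-i)!$. Both give the same value of $d_l$ and the same conclusion that $d_l\neq 0$ for $m\ge 1$.
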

\begin{proof}
 For simplicity, we write
$$D_1=\sum_{k\geq l+1} \frac{m !}{k !} \frac{(k+l+m) !}{ (k+m)!} \overline{A}^{k} \overline{B}^{k}, $$
 $$D_2=\sum_{k=1}^{l-1} \frac{m !}{k !} \bigg\{\frac{(k+l+m) !}{ (k+m)!}- \frac{(l+m) !}{(k+m) !} \frac{(l+m) !}{(l-k+m) !} \bigg\} \overline{A}^{k} \overline{B}^{k}$$
and $$D_3=\sum_{i=0}^{l-2} m!C_i \bigg\{\sum_{k=1}^{l-i-1}\frac{1}{k!} \overline{A}^{k}
 \overline{B}^{k}\bigg\}
\overline{A}^{i} \overline{B}^{i}.$$
It is easy to see that
$$C(A,B)=D_1+D_2+m!(\mathrm{C}_{2l+m}^{l}-\mathrm{C}_{l+m}^{l})\overline{A}^{l} \overline{B}^{l}$$
where $\mathrm{C}_{l+m}^{l}=\frac{(l+m)!}{m!{l!}}$.
 By (\ref{E32}) and a simple computation,
\begin{align*}
D_1&=\sum_{k\geq l+1}\frac{m !}{k !} \frac{(k+l+m) !}{ (k+m)!} \overline{A}^{k} \overline{B}^{k}\\
&=\sum_{k\geq l+1}\frac{m !}{k !} \bigg(C_0+\sum_{i=1}^l C_i\prod_{\lambda=0}^{i-1}(k-\lambda)\bigg) \overline{A}^{k} \overline{B}^{k}\\
&=\sum_{i=0}^l m!C_i\bigg\{ E-\sum_{k=0}^{l-i}\frac{1}{k!} \overline{A}^{k} \overline{B}^{k}\bigg\}
\overline{A}^{i} \overline{B}^{i}\\
&=\sum_{i=0}^{l-1} m!C_i\bigg\{ E-1-\sum_{k=1}^{l-i}\frac{1}{k!} \overline{A}^{k} \overline{B}^{k}\bigg\}
\overline{A}^{i} \overline{B}^{i}+m!C_l(E-1)\overline{A}^{l} \overline{B}^{l}.
\end{align*}
Since $E=1,$ it follows that
\begin{align*}
D_1&=\sum_{i=0}^{l-1} m!C_i\bigg\{-\sum_{k=1}^{l-i}\frac{1}{k!} \overline{A}^{k} \overline{B}^{k}\bigg\}
\overline{A}^{i} \overline{B}^{i}\\
&=\sum_{i=0}^{l-2} m!C_i \bigg\{-\sum_{k=1}^{l-i-1}\frac{1}{k!} \overline{A}^{k} \overline{B}^{k}\bigg\}
\overline{A}^{i} \overline{B}^{i}-\sum_{i=0}^{l-1} m!C_i \frac{1}{(l-i)!}\overline{A}^{l} \overline{B}^{l}.
\end{align*}
Let $d_l$ be coefficient  of $(\overline{AB})^l$, then $$C(A,B)=D_2-D_3+m!d_l\overline{A}^{l} \overline{B}^{l}.$$ Now, for $l\geq1,$ we consider the coefficient  of $(\overline{AB})^l$.  (\ref{EE9}) yields
\begin{align*}
d_l&=\mathrm{C}_{2l+m}^{l}-\mathrm{C}_{l+m}^{l}-\sum_{i=0}^{l-1} C_i \frac{1}{(l-i)!}\\
&=\mathrm{C}_{2l+m}^{l}-\mathrm{C}_{l+m}^{l}-\sum_{i=0}^{l} C_i \frac{1}{(l-i)!}+1\\
&=\mathrm{C}_{2l+m}^{l}-\mathrm{C}_{l+m}^{l}-\frac{1}{l!}\prod_{i=1}^l (l+m+i)+1\\
&=\frac{(2l+m)!}{l!(l+m)!}-\frac{(l+m)!}{l!m!}-\frac{(2l+m)!}{l!(l+m)!}+1\\
&=1-\frac{(l+m)!}{l!m!}.
\end{align*}
It is easy to check that $d_l\neq0$
if $m\neq0,$ which completes the proof.
\end{proof}
For $l\geq1,$ comparing the coefficient of $\overline{A}^l\overline{B}^l$ in Lemma \ref{LL1} and Lemma 4 in \cite{Qin}. $d_l$, i.e., the coefficient of $\overline{A}^l\overline{B}^l$,  is nonzero in Lemma \ref{LL1},  but $d_l=0$ is in \cite{Qin}. Combining these observations, we have the following result.
\begin{prop}\label{pp3}
Let $m$ be a positive integer. Suppose that $$f(z)=\sum_{i=1}^{N_1}a_iK_m(z,A_i),\quad g(z)=\sum_{\lambda=1}^{N_2}b_\lambda e^{zB_\lambda}.$$ Then
$T_{f\overline{g}}=T_{f}T_{\overline{g}}$ on $F^{2,m}$ if and only if at least one of $f$ and $g$ is constant function.
\end{prop}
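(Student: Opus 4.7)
The plan is to follow the strategy sketched in the paragraph preceding Lemma~\ref{LL1}: apply the operator identity $(T_{f\overline g}-T_fT_{\overline g})z^l=0$ to every monomial $z^l$ and extract the Taylor coefficient of $z^l$, producing an infinite system of scalar equations. Write $f(z)=\sum_{i=1}^{N_1}a_iK_m(z,A_i)$ and $g(z)=\sum_{\lambda=1}^{N_2}b_\lambda e^{B_\lambda z}$ with the $A_i$ distinct, the $B_\lambda$ distinct, and all $a_i,b_\lambda$ nonzero. Assume toward a contradiction that some $A_i\ne 0$ and some $B_\lambda\ne 0$.

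A direct projection computation gives $T_{\overline{e^{B_\lambda z}}}z^l=\sum_{k=0}^l\frac{(l+m)!}{(k+m)!(l-k)!}z^k\overline{B_\lambda}^{l-k}$, and since $K_m(\cdot,A_i)$ is entire and its product with any polynomial remains in $F^{2,m}$, $T_{K_m(\cdot,A_i)}$ acts by multiplication on polynomials. Hence $T_fT_{\overline g}z^l$ has a closed-form Taylor expansion. The other side, $T_{f\overline g}z^l=P(f\overline gz^l)$, is computed by expanding the kernel and exponential series and using monomial orthogonality in $L_m^2$. Matching the coefficient of $z^l$ in the identity and clearing the common factor $1/(l+m)!$ yields, for every $l\ge 0$, the equation
\[
\sum_{i=1}^{N_1}\sum_{\lambda=1}^{N_2}a_i\overline{b_\lambda}\,C(A_i,B_\lambda)=0,
\]
where $C(A_i,B_\lambda)$ is precisely the quantity studied in Lemma~\ref{LL1}, with $l$ built in.

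The next step is to establish $e^{\overline{A_iB_\lambda}}=1$ for every pair $(i,\lambda)$ with $a_i\overline{b_\lambda}\ne 0$ and $\overline{A_iB_\lambda}\ne 0$. Without assuming $E=1$, $C(A,B)$ decomposes naturally as $m!Q_l(\overline{AB})e^{\overline{AB}}-S_2(\overline{AB};l)$, with $Q_l$ a monic polynomial of degree $l$ and $S_2(\cdot;l)$ an explicit polynomial of degree $l$. Separating, in the vanishing sum, the genuine exponential contribution from the purely polynomial one, and using that the exponential factors $e^{\overline{A_iB_\lambda}}$ corresponding to distinct values of $\overline{A_iB_\lambda}$ produce linearly independent growth profiles in $l$, one deduces $e^{\overline{A_iB_\lambda}}=1$ whenever the coefficient $a_i\overline{b_\lambda}$ survives. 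With this in hand, Lemma~\ref{LL1} applies and $C(A_i,B_\lambda)$ reduces to a polynomial of exact degree $l$ in $\overline{A_iB_\lambda}$ whose leading coefficient $m!d_l$ is nonzero (since $m\ge 1$). A Vandermonde-type argument across $l=1,2,\ldots$ on the distinct nonzero values of $\overline{A_iB_\lambda}$, after first grouping pairs sharing a common $\xi$-value, then forces $a_i\overline{b_\lambda}=0$ whenever $\overline{A_iB_\lambda}\ne 0$. Since all coefficients are nonzero by construction, this is a contradiction, so $A_i\overline{B_\lambda}=0$ for every pair $(i,\lambda)$, forcing either all $A_i=0$ or all $B_\lambda=0$; in either case one of $f,g$ is constant.

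The main obstacle is the intertwined character of the two middle steps: extracting $e^{\overline{A_iB_\lambda}}=1$ and carrying out the Vandermonde extraction both rest essentially on the nonvanishing of $d_l$ supplied by Lemma~\ref{LL1} for $m\ge 1$. This is precisely the feature that distinguishes $F^{2,m}$ from the classical Fock space $F^2$, where the corresponding coefficient is $0$ and genuine nontrivial solutions occur as in Theorem~A. Additional technical care is required to handle possible coincidences $\overline{A_iB_\lambda}=\overline{A_{i'}B_{\lambda'}}$ across different pairs, which I will manage by first consolidating such pairs and then peeling off their joint contributions one value of $l$ at a time.
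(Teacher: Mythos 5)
There is a genuine gap, and it sits precisely where you diverge from the paper: the extraction of $e^{\overline{A_iB_\lambda}}=1$. You restrict attention to the \emph{diagonal} scalar equations — the coefficient of $z^l$ in $(T_{f\overline g}-T_fT_{\overline g})z^l$ — which indeed produce $\sum_{i,\lambda}a_i\overline{b_\lambda}\,C(A_i,B_\lambda;l)=0$ for all $l$. But the paper's route to $e^{\overline{A_iB_\lambda}}=1$ does \emph{not} come from the diagonal: it comes from the off‑diagonal coefficients, concretely from applying the operator identity to the constant $1$ (i.e.\ taking $l=0$) and reading off the coefficient of $z^j$ for every $j$, which yields $0=\sum_{i,\lambda}a_i\overline{b_\lambda}\,(e^{\overline{A_iB_\lambda}}-1)\overline{A_i}^{\,j}$ for all $j\ge0$; a Vandermonde argument on the distinct $\overline{A_i}$ then isolates the $(e^{\overline{A_iB_\lambda}}-1)$ factors (and a symmetric off‑diagonal direction, from the cited proof in \cite{Qin}, finishes the isolation in $\lambda$). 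By looking only at the diagonal you have discarded exactly the equations in which the polynomial parts cancel and a pure $(e^\xi-1)$ factor survives. Your substitute — that the sequences $l\mapsto m!P_l(\xi)e^{\xi}-S_2(\xi;l)$ at the finitely many fixed values $\xi=\overline{A_iB_\lambda}$ have ``linearly independent growth profiles in $l$'' — is an assertion, not an argument: $e^{\xi}$ does not vary with $l$, both $P_l$ and $S_2$ are degree‑$l$ polynomials in the fixed number $\xi$, and proving linear independence of the resulting sequences would require a delicate analysis of confluent‑hypergeometric asymptotics that you have not carried out. You also mis‑attribute this step to $d_l\ne0$; the nonvanishing of $d_l$ from Lemma~\ref{LL1} only becomes available \emph{after} $e^{\overline{A_iB_\lambda}}=1$ is in hand, since the lemma's formula is stated under the hypothesis $E=1$.

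A secondary issue: at the end you assert that $\sum_\xi c_\xi\,\xi^l=0$ for all $l$ (with $c_\xi=\sum_{\overline{A_iB_\lambda}=\xi}a_i\overline{b_\lambda}$) gives an ``immediate contradiction'' because each $a_i\overline{b_\lambda}\ne0$. It does not: the Vandermonde step only forces each grouped coefficient $c_\xi$ to vanish, and a $c_\xi$ is a sum of several nonzero products which can cancel when $\overline{A_iB_\lambda}=\overline{A_{i'}B_{\lambda'}}$. The paper explicitly defers this last implication to ``step 4'' of the proof of Theorem~6 in \cite{Qin}, which uses further structure (again, more than the diagonal supplies). So to repair the proposal you should (i) restore the off‑diagonal coefficient equations — in particular the $l=0$, all $j$ family — to establish $e^{\overline{A_iB_\lambda}}=1$ rigorously, and (ii) supply the final argument passing from $c_\xi=0$ for every distinct nonzero $\xi$ to the conclusion that one of $f,g$ is constant, rather than claiming an instant contradiction.
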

\begin{proof}
It is clear that $T_f T_{\overline{g}}=T_{f\overline{g}}$ if one of $f$ and $g$ is constant function.
Conversely, assume  $T_f T_{\overline{g}}=T_{f\overline{g}}.$ Without loss of generality, we assume $a_i\neq 0$, $b_j\neq0,$  $\{A_i\}_{i=1}^{N_1}$ and $\{B_\lambda\}_{\lambda=1}^{N_2}$ are two  finite
collections of distinct non-zero complex numbers.

By the proof of Theorem 6 in \cite{Qin}, we can see that
\begin{align*}
& \sum_{\lambda=1}^{N_2}\sum_{i=1}^{N_1} a_{i} \overline{b_\lambda } \sum_{k_i=\max \{0, j-l\}} \frac{m !}{(k_i+m) !} \frac{(k_i+l+m) !}{(k_i+l-j) !} \overline{A_{i}}^{k_i} \overline{B_\lambda}^{k_i+l-j} \\
=&\sum_{\lambda=1}^{N_2}\sum_{i=1}^{N_1} a_{i}\overline{b_\lambda }\sum_{k_i=\max \{0, j-l\}}^{j} \frac{m !}{(k_i+m) !} \frac{(j+m) !}{(k_i+l-j) !} \frac{(l+m) !}{(j-k_i+m) !} \overline{A_{i}}^{k_i} \overline{B_\lambda}^{k_i+l-j}
\end{align*}
for all $j,l\geq0.$ Assume $0=l\leq j,$ one can see that
$$0=\sum_{\lambda=1}^{N_2}\sum_{i=1}^{N_1} a_{i}\overline{b_\lambda }  \sum_{k_i=j+1} \frac{1}{(k_i-j) !} \overline{A_{i}}^{k_i} \overline{B_\lambda}^{k_i-j}.$$ This shows that
\begin{equation}\label{E21}
0=\sum_{\lambda=1}^{N_2}\sum_{i=1}^{N_1} a_{i}\overline{b_\lambda } (e^{\overline{A_i} \overline{B_\lambda}}-1)A_i^{j}
\end{equation}
for all $j\geq 0.$ Using (\ref{E21}) and  the proof of  Theorem 6 in \cite{Qin}, it is easy to see that
\begin{equation}\label{E22}
e^{\overline{A_i }\overline{B_\lambda}}=1
\end{equation} for all $1\leq i \leq N_1,$ $1\leq \lambda \leq N_2.$

If $l=j=1,$ then we have
$$\sum_{\lambda=1}^{N_2}\sum_{i=1}^{N_1} a_{i} \overline{b_\lambda } \sum_{k_i=1} \frac{m !}{k_i!} (k_i+m+1)\overline{A_{i}}^{k_i} \overline{B_\lambda}^{k_i}=\sum_{\lambda=1}^{N_2}\sum_{i=1}^{N_1} a_{i} \overline{b_\lambda }(1+m)! \overline{A_{i}} \overline{B_\lambda}.$$
It follows that
\begin{align*}
&\quad\sum_{\lambda=1}^{N_2}\sum_{i=1}^{N_1} m!a_{i} \overline{b_\lambda }\bigg(e^{\overline{A_i} \overline{B_\lambda}}\overline{A_i} \overline{B_\lambda}-(m+1)(e^{\overline{A_i} \overline{B_\lambda}}-1)\bigg)\\
&=\sum_{\lambda=1}^{N_2}\sum_{i=1}^{N_1} a_{i} \overline{b_\lambda }(1+m)! \overline{A_{i}} \overline{B_\lambda}.
\end{align*}
Using (\ref{E22}), the above equation becomes
$$\sum_{\lambda=1}^{N_2}\sum_{i=1}^{N_1} m!a_{i} \overline{b_\lambda }\overline{A_i} \overline{B_\lambda}=\sum_{\lambda=1}^{N_2}\sum_{i=1}^{N_1} a_{i} \overline{b_\lambda }(1+m)! \overline{A_{i}} \overline{B_\lambda}.$$
This gives
\begin{equation}\label{E23}
\sum_{\lambda=1}^{N_2}\sum_{i=1}^{N_1}a_{i} \overline{b_\lambda } \overline{A_{i}} \overline{B_\lambda}=0.
\end{equation}

In fact, if we set $l=j\geq 1$, then
\begin{align}\label{E24}
& \sum_{\lambda=1}^{N_2}\sum_{i=1}^{N_1} a_{i} \overline{b_\lambda } \sum_{k_i=0}^\infty \frac{m !}{k_i !} \frac{(k_i+l+m) !}{ (k_i+m)!} \overline{A_{i}}^{k_i} \overline{B_\lambda}^{k_i} \nonumber\\
=&\sum_{\lambda=1}^{N_2}\sum_{i=1}^{N_1} a_{i}\overline{b_\lambda }\sum_{k_i=0}^{l} \frac{m !}{k_i !} \frac{(l+m) !}{(k_i+m) !} \frac{(l+m) !}{(j-k_i+m) !} \overline{A_{i}}^{k_i} \overline{B_\lambda}^{k_i}.
\end{align}
Setting \begin{align*}
C(A_i,B_\lambda)&= \sum_{\lambda=1}^{N_2}\sum_{i=1}^{N_1} a_{i} \overline{b_\lambda } \sum_{k_i=0}^\infty \frac{m !}{k_i !} \frac{(k_i+l+m) !}{ (k_i+m)!} \overline{A_{i}}^{k_i} \overline{B_\lambda}^{k_i}\\
&\quad- \sum_{\lambda=1}^{N_2}\sum_{i=1}^{N_1} a_{i}\overline{b_\lambda }\sum_{k_i=0}^{l} \frac{m !}{k_i !} \frac{(l+m) !}{(k_i+m) !} \frac{(l+m) !}{(j-k_i+m) !} \overline{A_{i}}^{k_i} \overline{B_\lambda}^{k_i}.
\end{align*}
By (\ref{E24}), we have
\begin{equation}\label{E26}
0=\sum_{\lambda=1}^{N_2}\sum_{i=1}^{N_1} a_{i} \overline{b_\lambda }C(A_i,B_\lambda).
\end{equation}

Assume $j=l=2.$ From Lemma \ref{LL1}, we obtain $$0=\sum_{\lambda=1}^{N_2}\sum_{i=1}^{N_1} a_{i} \overline{b_\lambda }\bigg\{\eta_1\overline{A_iB_\lambda}+\eta_2(\overline{A_iB_\lambda})^2\bigg\},$$
where
$\eta_k$ is the coefficient of $\overline{A}_i^k\overline{B_\lambda}^{k}$. Note that $\eta_k$ is independent
of $i$ and $\lambda.$ Moreover, $\eta_2\neq0$ by Lemma \ref{LL1}. Then, by (\ref{E23}), we obtain
\begin{equation}\label{E51}
0=\sum_{\lambda=1}^{N_2}\sum_{i=1}^{N_1} a_{i} \overline{b_\lambda } (\overline{A_iB_\lambda})^2.
\end{equation}

Comparing (\ref{E23}), (\ref{E26}), (\ref{E51}) and Lemma \ref{LL1},
one can see that
\begin{equation}\label{E25}
0=\sum_{\lambda=1}^{N_2}\sum_{i=1}^{N_1} a_{i} \overline{b_\lambda }\overline{A_i}^l \overline{B_\lambda}^l
\end{equation}
for all $l\geq1.$ By the "step 4" of proof of Theorem 6 in \cite{Qin}, (\ref{E25}) implies that one of the  $f$ and $g$ is constant function. This completes the proof.
\end{proof}
\begin{proofa}
Using the Theorem \ref{B}, Lemma \ref{LL6}, Proposition \ref{pp2} and \ref{pp3}, we get the Theorem.
\end{proofa}

\section{The symbol space in \cite{Bauer1}}
Recall that $$\mathcal{A}_1=\{f:f(z)=\sum_{i=1}^Np_i(z)e^{A_iz},\quad a_i,A_i\in \mathbb{C},\quad p_i\in \mathcal{P}\},$$ where
$\mathcal{P}$ is the algebra of holomorphic polynomials on $\mathbb{C}$.
In \cite{Bauer1}, the authors consider the semi-commuting Toeplitz operator with $\mathcal{A}_1-$symbol on Fock space. In this section, we try to characterize semi-commuting Toeplitz operators with $\mathcal{A}_1-$symbol on Fock-Sobolev space. We put
$$f^*(z)=\overline{f(\overline{z})}$$ for   holomorphic function $f$ on $\mathbb{C}.$ The following Lemma can be found in \cite{Bauer1}.
\begin{lemma}\cite{Bauer1}\label{LL5}
 The equality
\begin{align*}
\int_{\mathbb{C}} p(w)\overline{q(w)}e^{(A+\overline{z})w+(\overline{B}+z)\overline{w}-|w|^2}
\frac{dA(w)}{\pi}
=e^{(A+\overline{z})(\overline{B}+z)}q^*(\partial_z+\overline{z}+z)p(z+\overline{B}).
\end{align*}
holds for $A,B\in \mathbb{C},$ and $p,q\in \mathcal{P}.$
\end{lemma}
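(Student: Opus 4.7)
The plan is to reduce the left-hand side to a single reproducing-kernel integration on the classical Fock space $F^2$, followed by a routine differential-operator manipulation. To streamline the bookkeeping, I abbreviate $\alpha=A+\overline{z}$ and $\beta=\overline{B}+z$, so that the exponent inside the integrand becomes $\alpha w+\beta\overline{w}-|w|^2$ and the statement reduces to evaluating $\int_{\mathbb{C}}p(w)\overline{q(w)}e^{\alpha w+\beta\overline{w}-|w|^2}\frac{dA(w)}{\pi}$.

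The first step is to remove the antiholomorphic factor $\overline{q(w)}$ by trading each power of $\overline{w}$ for a derivative in $\beta$. Writing $q(w)=\sum c_k w^k$, so that $q^*(\zeta)=\sum\overline{c_k}\zeta^k$, the identity $\overline{w}^k e^{\beta\overline{w}}=\partial_\beta^k e^{\beta\overline{w}}$ gives the operator relation
$$\overline{q(w)}\,e^{\beta\overline{w}}=q^*(\partial_\beta)\,e^{\beta\overline{w}}.$$
Because $q$ is a polynomial, $q^*(\partial_\beta)$ is a finite-order differential operator and the Gaussian decay in $w$ together with polynomial growth of $p$ justifies pulling it outside the integral.

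The second step is the reproducing-kernel evaluation on $F^2$: for every entire $F$, $\int_{\mathbb{C}}F(w)e^{\beta\overline{w}-|w|^2}\frac{dA(w)}{\pi}=F(\beta)$. Applied to $F(w):=p(w)e^{\alpha w}$, this collapses the inner integral to $p(\beta)e^{\alpha\beta}$. Combining with the previous step, the left-hand side equals $q^*(\partial_\beta)\bigl[p(\beta)e^{\alpha\beta}\bigr]$.

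The final step is the standard conjugation identity $e^{-\alpha\beta}\,\partial_\beta\,e^{\alpha\beta}=\partial_\beta+\alpha$, which iterates to $q^*(\partial_\beta)\bigl[e^{\alpha\beta}h(\beta)\bigr]=e^{\alpha\beta}q^*(\partial_\beta+\alpha)h(\beta)$ for any polynomial $h$. Taking $h=p$ and unwinding $\alpha=A+\overline{z}$, $\beta=\overline{B}+z$ (with $\partial_\beta=\partial_z$) produces the asserted right-hand side. There is no real analytic obstacle, since every manipulation reduces to polynomial-times-Gaussian integrals that can be computed termwise; the only point that requires care is keeping straight which variable is being differentiated when the operator $q^*(\partial_\beta)$ is translated by $\alpha$, because $\alpha$ itself depends on $\overline{z}$ while $\beta$ depends on $z$, so the substitutions must be carried out only after the intertwining step.
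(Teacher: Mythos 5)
The paper cites this lemma from \cite{Bauer1} without reproducing a proof of its own, so there is no internal argument to compare against; your proposal supplies one. Your three-step scheme --- trading $\overline{q(w)}$ for the differential operator $q^*(\partial_\beta)$ applied to $e^{\beta\overline w}$, evaluating the remaining integral via the $F^2$ reproducing property applied to the entire function $w\mapsto p(w)e^{\alpha w}$, and then intertwining through $e^{-\alpha\beta}\,\partial_\beta\, e^{\alpha\beta}=\partial_\beta+\alpha$ --- is the standard derivation and is mathematically sound.

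However, you should re-examine the final sentence, because the computation you outlined does \emph{not} reproduce the right-hand side as printed. With $\alpha=A+\overline z$ and $\beta=\overline B+z$ (so $\partial_\beta=\partial_z$), the intertwining step yields
\[
e^{(A+\overline z)(\overline B+z)}\;q^*\bigl(\partial_z + A + \overline z\bigr)\,p(z+\overline B),
\]
whereas the lemma statement has $q^*(\partial_z+\overline z+z)$. The two agree only when $A=z$, and a direct numerical check rules the printed version out: take $p\equiv 1$, $q(w)=w$, $A=1$, $B=0$, $z=0$; the left-hand integral equals $\int_{\mathbb C}\overline w\,e^{w-|w|^2}\,dA(w)/\pi = 1$, your formula gives $(\partial_z+1)1=1$, but the printed right-hand side gives $(\partial_z+0+0)1=0$. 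The statement as displayed therefore contains a typo --- the argument of $q^*$ should end in $A$, not $z$ --- which is also consistent with the Bauer--Choe--Koo formula quoted in the paper's introduction, where the corresponding operator argument is $\partial_z+\overline z+\overline{a_i}$ and $\overline{a_i}$ plays the role of $A$. So your proof is correct in method, but the claim that it ``produces the asserted right-hand side'' should be replaced by a note that what it produces is $q^*(\partial_z+\overline z+A)p(z+\overline B)$, and that this is the correct form of the identity.
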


The proof of the second main result is divided into several steps. We now ready to prove the first part.
\begin{lemma}\label{LL4}
Suppose $f(z)=p(z)e^{Az}$, $g(z)=q(z)e^{Bz}$ and $p,q\in \mathcal{P} \setminus \{0\}$.
If $T_{f\overline{g}}=T_{f}T_{\overline{g}}$, then $e^{A\overline{B}}=1$ and the following statements
hold:\\
(a) If $A\neq0,$ then $q$ is constant.\\
(b) If $B\neq0,$ then $p$ is constant.
\end{lemma}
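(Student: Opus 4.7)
The plan is to split on the vanishing of $A$ and $B$. If $A=0$, then $f=p$ is a polynomial and the hypothesis $T_{f\overline{g}}=T_fT_{\overline{g}}$ is equivalent (via adjoints) to $(T_g,T_{\overline{p}}]=0$, so Lemma \ref{LL6} applies and one of $p,g$ must be constant; tracking sub-cases ($g$ constant forces $q(z)e^{Bz}$ constant, hence $B=0$ or $q\equiv 0$) yields the relevant conclusion and the vacuous statement, with $e^{A\overline{B}}=1$ automatic. The case $B=0$ is symmetric. The only remaining case is $A,B\neq 0$, which I plan to vacate by the Berezin-transform method pushed through Lemma \ref{LL5}; once that case is shown to be empty, the lemma's three conclusions hold vacuously there.

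So, assuming $A,B\neq 0$, I would pass from $T_{f\overline{g}}=T_fT_{\overline{g}}$ to the Berezin identity $\widetilde{f\overline{g}}(z)=f(z)\overline{g(z)}$. Using
$$|K_m(w,z)|^2|w|^{2m}=\frac{(m!)^2}{|z|^{2m}}\bigl|e^{w\overline{z}}-q_m(w\overline{z})\bigr|^2\qquad\text{and}\qquad K_m(z,z)=\frac{m!\,(e^{|z|^2}-q_m(|z|^2))}{|z|^{2m}},$$
this becomes an integral equation whose right-hand side expands into four Gaussian integrals $J_1,\dots,J_4$. Each $J_i$ is evaluated by Lemma \ref{LL5}: in $J_2$ and $J_3$ the factors $q_m(\overline{w}z)$ and $q_m(w\overline{z})$ are absorbed into the $q$-polynomial of Lemma \ref{LL5}, and in $J_4$ both are absorbed, reducing the integral to a pure polynomial prefactored by $e^{A\overline{B}}$. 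Replacing $\overline{z}$ by an independent antiholomorphic variable $\overline{\eta}$ via Lemma 2.9 of \cite{Bauer1} yields a holomorphic identity on $\mathbb{C}^2$.

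The left-hand side then reads $p(z)q^*(\overline{\eta})\,e^{Az+\overline{B}\,\overline{\eta}}\bigl(e^{z\overline{\eta}}-q_m(z\overline{\eta})\bigr)$. On the right, each $J_i$ carries a prefactor $e^{A\overline{B}}$ together with one of the exponentials $e^{Az+\overline{B}\,\overline{\eta}+z\overline{\eta}}$, $e^{\overline{B}\,\overline{\eta}}$, $e^{Az}$, or $1$ (because the $\overline{w}$-coefficient in $J_2$'s exponent is $\overline{B}$ alone with no $z$-dependence, and symmetrically the $w$-coefficient in $J_3$'s exponent is $A$ alone). When $A,B\neq 0$, the exponentials $\{e^{Az+\overline{B}\,\overline{\eta}+z\overline{\eta}},\,e^{Az+\overline{B}\,\overline{\eta}},\,e^{Az},\,e^{\overline{B}\,\overline{\eta}},\,1\}$ are linearly independent over the polynomial ring in $(z,\overline{\eta})$. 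Matching the coefficient of $e^{Az+\overline{B}\,\overline{\eta}}$: the left contributes $-p(z)q^*(\overline{\eta})q_m(z\overline{\eta})$, the right contributes $0$, so $p(z)q^*(\overline{\eta})q_m(z\overline{\eta})\equiv 0$ as a polynomial identity. Since $p,q$ are nonzero polynomials and $q_m$ is a nonzero polynomial for $m\ge 1$, this is impossible, so the case $A,B\neq 0$ is indeed empty.

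The principal obstacle is the middle paragraph: one must verify explicitly that in $J_2$ the application of Lemma \ref{LL5} produces an exponent of type $e^{(A+\overline{z})\overline{B}}=e^{A\overline{B}+\overline{B}\,\overline{z}}$ with no $z$-term, and analogously for $J_3$, so that no spurious $e^{Az+\overline{B}\,\overline{\eta}}$ contribution is generated on the right-hand side. This is the key algebraic observation that isolates the contradictory coefficient match. Once this is in hand the coefficient comparison is clean and the contradiction is immediate; everything else is a direct extension of the techniques already developed in Lemma \ref{LL3} and \cite{Bauer1}.
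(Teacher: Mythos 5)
Your proposal is correct, and it takes a genuinely different route from the paper. The paper treats all values of $A,B$ uniformly: it keeps the full identity $[e^{|z|^2}-q_m(|z|^2)]\,p(z)\overline{q(z)}e^{Az+\overline{B}\overline{z}} = Q_1 + Q_2 - Q_3$, isolates only the ``coefficient of $e^{|z|^2}$'' (i.e.\ the $e^{Az+\overline{B}\overline{\eta}+z\overline{\eta}}$ block after polarization) to recover the Fock-space relation (\ref{E33}), and then closes the argument by citing the proof of Lemma 3.4 of \cite{Bauer1}; the vacuity of the case $A,B\neq 0$ is established only afterwards, in Proposition \ref{pp1}, by combining with Lemma \ref{LL3}. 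You instead split at the outset: the cases $A=0$ or $B=0$ are dispatched by passing to adjoints and applying Lemma \ref{LL6}, and the case $A,B\neq 0$ is killed directly by extracting a \emph{different} coefficient, namely that of $e^{Az+\overline{B}\overline{\eta}}$, which the paper never uses. Your bookkeeping of the exponential prefactors is right: $J_1\sim e^{(A+\overline{\eta})(\overline{B}+z)}$, the two cross integrals carry $e^{A(\overline{B}+z)}$ and $e^{(A+\overline{\eta})\overline{B}}$ respectively, and the double-$q_m$ integral carries $e^{A\overline{B}}$; when $A,B\neq 0$ these five exponents are pairwise distinct polynomials in $(z,\overline{\eta})$, so the exponentials are independent over polynomial coefficients, and the $e^{Az+\overline{B}\overline{\eta}}$ comparison forces $p(z)q^*(\overline{\eta})q_m(z\overline{\eta})\equiv 0$, which is impossible when $m\geq 1$. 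This is actually a sharper statement than Lemma \ref{LL4} asserts, and it exploits precisely the ingredient that distinguishes $m\geq 1$ from $m=0$: the nonvanishing of $q_m$. What your route buys: it is self-contained (no appeal to the internals of Lemma 3.4 of \cite{Bauer1} in the main case) and it merges Lemma \ref{LL4} with what the paper later proves in Proposition \ref{pp1}. What the paper's route buys: a single uniform derivation valid for all $(A,B)$ that reduces cleanly to the known Fock-space ($m=0$) identity, making the comparison with \cite{Bauer1} transparent. One cosmetic caveat: you describe the cross-integrals as ``evaluated by Lemma \ref{LL5},'' but since their exponents lack the $\overline{z}w$ (resp.\ $z\overline{w}$) term, they are not literal instances of that lemma's statement; you should say they are evaluated by the underlying Gaussian formula $\int w^a\overline{w}^b e^{\alpha w+\beta\overline{w}-|w|^2}\,dA(w)/\pi = e^{\alpha\beta}\cdot(\text{polynomial in }\alpha,\beta)$, which is all that is needed to read off the exponential prefactors.
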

\begin{proof}
Since $T_{f\overline{g}}=T_{f}T_{\overline{g}}$, then
$$\widetilde{f\overline{g}}(z)=p(z)\overline{q(z)}e^{Az+\overline{Bz}}.$$
We compute the Berezin transform of $f\overline{g}$
\begin{align*}
\widetilde{f\overline{g}}(z)&=  \frac{1}{m!\pi}\int_{\mathbb{C}} p(w)\overline{q(w)}e^{Aw+\overline{Bw}}|k_m(w,z)|^2|w|^{2m}e^{-|w|^2}dA(w)\\
&=\frac{m!}{K_m(z,z)}  \frac{1}{\pi}\int_{\mathbb{C}} p(w)\overline{q(w)}e^{Aw+\overline{Bw}}\frac{|e^{z\overline{w}}
-q_m(z\overline{w})|^2}{|z|^{2m}}e^{-|w|^2}dA(w).
\end{align*}
So we have
\begin{align*}
&\quad \frac{|z|^{2m}K_m(z,z)}{m!}\widetilde{f\overline{g}}(z)\\
&=  \frac{1}{\pi}\int_{\mathbb{C}} p(w) \overline{q(w)}e^{Aw+\overline{Bw}}{|e^{z\overline{w}}
-q_m(z\overline{w})|^2}e^{-|w|^2}dA(w).
\end{align*}
For simplicity, we write
$$Q_1=  \frac{1}{\pi}\int_{\mathbb{C}} p(w) \overline{q(w)}e^{Aw+\overline{Bw}}e^{\overline{z}w+z\overline{w}}e^{-|w|^2}dA(w),$$
$$ Q_2= \frac{1}{\pi}\int_{\mathbb{C}} p(w) \overline{q(w)}e^{Aw+\overline{Bw}}
\bigg(|q_m(\overline{z}w)|^2-q_m(\overline{w}z)e^{w\overline{z}}\bigg)e^{-|w|^2}dA(w),$$
and
$$Q_3=  \frac{1}{\pi}\int_{\mathbb{C}} p(w)\overline{q(w)}q_m(w\overline{z})e^{Aw+\overline{Bw}}e^{\overline{w}z}e^{-|w|^2}dA(w).$$
Then $$\frac{|z|^{2m}K_m(z,z)}{m!}\widetilde{f\overline{g}}(z)=Q_1+Q_2-Q_3.$$
Note that the "coefficient" of  $e^{|z|^2}$ in $Q_2$ and $Q_3$ are both zero.

Using Lemma \ref{LL5}, we can obtain
\begin{equation}\label{E34}
\frac{|z|^{2m}K_m(z,z)}{m!}\widetilde{f\overline{g}}(z)=
e^{(A+\overline{z})(\overline{B}+z)}q^*(\partial_z+\overline{z}+z)p(z+\overline{B})
+Q_2-Q_3.
\end{equation}
Since $\widetilde{f\overline{g}}(z)=p(z)\overline{q(z)}e^{Az+\overline{Bz}},$ then, by above equation, \begin{align}\label{E35}
 [e^{|z|^2}-q_m(|z|^2)]\widetilde{f\overline{g}}(z)&=
p(z)\overline{q(z)}[e^{|z|^2}-q_m(|z|^2)]e^{Az+\overline{Bz}}\nonumber\\
&=e^{(A+\overline{z})(\overline{B}+z)}q^*(\partial_z+\overline{z}+z)p(z+\overline{B})+Q_2-Q_3.
\end{align}
Consider the "coefficient" of $e^{|z|^2}$ in (\ref{E35}), one can see that
\begin{equation}\label{E33}
p(z)\overline{q(z)}e^{Az+\overline{Bz}}=e^{A\overline{B}+Az+\overline{B}
\overline{z}}q^*(\partial_z+\overline{z}+z)p(z+\overline{B}).
\end{equation}
By (\ref{E33}) and the proof of Lemma 3.4 in \cite{Bauer1}, $e^{A\overline{B}}=1.$ Moreover, if $B\neq0,$ then $p$ is constant. This shows (b). On the other hand, $q$ is constant if $A\neq0$ via complex conjugation. This completes the proof.
\end{proof}
\begin{remark}\label{Re1}
Note that
 $$|q_m(z)|=\sqrt{\sum_{i,j}\frac{z^i\overline{z}^j}{i!j!}}\leq \sqrt{\sum_{i,j}|z|^{i+j}}\lesssim \sqrt{1+|z|^{2m}}\lesssim 1+|z|^m.$$
The above fact and a simple computation show that
\begin{align*}
|Q_3|& \lesssim  \int_{\mathbb{C}} |p(w)q(w)e^{(A+B)w}||q_m(\overline{w}z)e^{\overline{w}z}|e^{-|w|^2}dA(w)\\
&\lesssim \int_{\mathbb{C}} |q(w)p(w)e^{(A+B)w}||(1+|wz|^m)e^{\overline{w}z}|e^{-|w|^2}dA(w)\\
&\lesssim(1+|z|^m)\int_{\mathbb{C}/\{0\}} |w^mp(w)q(w)e^{(A+B)w}||e^{\overline{w}z}|e^{-|w|^2}dA(w)\\
&\lesssim (1+|z|^m) \int_{\mathbb{C}} |w^mp(w)q(w)e^{(A+B)w}||e^{\overline{w}z}|e^{-|w|^2}dA(w).
\end{align*}
By Cauchy-Bunyakovsky-Schwarz Inequality,
$$|Q_3| \lesssim (1+|z|^m)\|e^{w\overline{z}}\|_2\lesssim (1+|z|^m) e^{\frac{|z|^2}{2}}\lesssim e^{\frac{|z|^2}{2}},$$
here $\|\cdot\|_2$ is the norm of $L^2(\mathbb{C},e^{-|z|^2}dA(z))$.
For $z\neq0,$ it follows that
$$\frac{|Q_3|}{e^{|z|^2}-q_m(|z|^2)}\lesssim \frac{e^{\frac{|z|^2}{2}}}{{e^{|z|^2}-q_m(|z|^2)}}<\infty.$$
The same inequality holds with $Q_2$ in place of $Q_3$. We thus obtain
$\frac{|Q_2|}{e^{|z|^2}-q_m(|z|^2)}$ and $\frac{|Q_3|}{e^{|z|^2}-q_m(|z|^2)}$ are both bounded on $\mathbb{C}/\{0\}.$

\end{remark}
 The above Lemma shows the Berezin transform of a polynomial is again a polynomial by the case $A=b=0$. From Lemma \ref{LL3} and \ref{LL4}, we get the following result.
\begin{prop}\label{pp1}
Suppose $f(z)=p(z)e^{Az}$, $g(z)=q(z)e^{Bz}$ and $p,q\in \mathcal{P} \setminus \{0\}$.
Then $T_{f\overline{g}}=T_{f}T_{\overline{g}}$ if and only if one of $f$ and $g$ is a constant function.
\end{prop}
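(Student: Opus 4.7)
The ``if'' direction is immediate since if $f$ is a constant $c$, then $T_f = cI$ and $T_{f\overline{g}} = T_{c\overline{g}} = c\,T_{\overline{g}} = T_f T_{\overline{g}}$, and symmetrically for $g$. The plan is to prove the converse by splitting on whether the exponents $A$ and $B$ vanish, and in each case to invoke one of the three lemmas already established (Lemmas \ref{LL6}, \ref{LL3}, \ref{LL4}).

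Assume $T_{f\overline{g}} = T_f T_{\overline{g}}$. The first case I would treat is $A = B = 0$, so that $f = p$ and $g = q$ are holomorphic polynomials. Then Lemma \ref{LL6}, applied with the polynomial $q$ (or $p$) in place of the polynomial symbol there and $f$ (or $g$) in place of the general holomorphic factor, immediately forces one of $p$ or $q$ to be a constant. The second case is when exactly one of $A,B$ vanishes, say $A = 0$ and $B \neq 0$. Since $B \neq 0$, Lemma \ref{LL4}(b) gives that $p$ is constant, and since $A = 0$ this forces $f = p$ to be a constant function. The subcase $A \neq 0$, $B = 0$ is handled symmetrically by Lemma \ref{LL4}(a), yielding that $g$ is constant.

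The remaining case is $A \neq 0$ and $B \neq 0$. Here Lemma \ref{LL4} applies in both directions: part (a) gives $q$ constant and part (b) gives $p$ constant. Hence $f$ and $g$ reduce to nonzero scalar multiples of $e^{Az}$ and $e^{Bz}$ respectively. Factoring out these scalars, the hypothesis $T_{f\overline{g}} = T_f T_{\overline{g}}$ becomes $T_{e^{Az}\overline{e^{Bz}}} = T_{e^{Az}} T_{\overline{e^{Bz}}}$, and Lemma \ref{LL3} now forces $A\overline{B} = 0$. This contradicts $A \neq 0$ and $B \neq 0$, so this case is vacuous.

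I do not expect any genuine obstacle here: all the analytic content has already been absorbed into Lemmas \ref{LL3} and \ref{LL4} (whose proofs handled the Berezin transform computations involving $K_m$, the truncated kernel polynomial $q_m$, and Lemma \ref{LL5}), so Proposition \ref{pp1} reduces to an orderly case analysis on $(A,B) \in \{0\} \cup (\mathbb{C}\setminus\{0\})$ in each coordinate. The only mild subtlety is to notice that in the last case one cannot stop at the conclusion $e^{A\overline{B}} = 1$ provided by Lemma \ref{LL4}; one must push further via Lemma \ref{LL3} to extract $A\overline{B} = 0$, which is what rules out exponential semi-commutants on $F^{2,m}$ (unlike on the classical Fock space).
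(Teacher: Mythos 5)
Your proof is correct and follows essentially the same route as the paper: reduce via Lemma~\ref{LL4} to the pure-exponential or pure-polynomial cases and then invoke Lemma~\ref{LL3} (resp.\ a polynomial result, for which the paper cites an external reference while you use Lemma~\ref{LL6}) to close each case. The case analysis in your version is somewhat more explicit than the paper's terse two-sentence argument, but the underlying lemmas and logical structure coincide.
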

\begin{proof}
By Lemma  \ref{LL4}, we have $g(z)=e^{Bz}$ if $A\neq0$. Similarly, $f(z)=e^{Az}$ if $B\neq0.$
In this case, $T_{f\overline{g}}=T_{f}T_{\overline{g}}$ implies one of  $f$ and $g$ is constant
by Lemma \ref{LL3}.

Now, assume $A=0.$ Then $f$  is a constant if $B\neq0.$ So we assume $B=0$, it follows that
$f$ and $g$ are holomorphic polynomials. So,  $T_{f\overline{g}}=T_{f}T_{\overline{g}}$ gives
one of $f$ and $g$ is a constant by Theorem 9 in \cite{Bauer1}.

On the other hand, $T_{f\overline{g}}=T_{f}T_{\overline{g}}$ if one of $f$ and $g$ is a constant function. This completes the proof.
\end{proof}
Now, we can complete  characterize semi-commuting Toeplitz operators with $A_1$-symbol on Fock-Sobolev space.
\begin{theorem}\label{TT2}
Suppose $f,g\in \mathcal{A}_1$ and $m$ is a positive integer.   Then $T_{f\overline{g}}=T_{f}T_{\overline{g}}$ on $F^{2,m}$ if and only if
at least one of $f$ and $g$ is a constant.
\end{theorem}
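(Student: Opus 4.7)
The plan is to reduce the multi-term problem to the single-term equation \eqref{E33} already handled by Lemma \ref{LL4}, and then finish by a short case analysis invoking Lemma \ref{LL6} and Proposition \ref{pp2}. First, I would write $f(z)=\sum_{i=1}^{N} p_i(z) e^{A_i z}$ and $g(z)=\sum_{j=1}^{M} q_j(z) e^{B_j z}$, combining like terms so that the exponents $A_i$ are distinct, the $B_j$ are distinct, and each $p_i, q_j \not\equiv 0$. The hypothesis $T_{f\overline{g}}=T_{f}T_{\overline{g}}$ forces $\widetilde{f\overline{g}}=f\overline{g}$, and the idea is to run the Berezin-transform computation of Proposition \ref{pp1} term by term. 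Using Lemma \ref{LL5}, for each pair $(i,j)$ one gets
\[
\frac{|z|^{2m}K_m(z,z)}{m!}\,\widetilde{p_i\overline{q_j}\,e^{A_iz+\overline{B_jz}}}(z)=Q_{ij,1}+Q_{ij,2}-Q_{ij,3},
\]
with $Q_{ij,1}=e^{(A_i+\overline{z})(\overline{B_j}+z)}\,q_j^{*}(\partial_z+\overline{z}+z)\,p_i(z+\overline{B_j})$, while the remainders $Q_{ij,2}, Q_{ij,3}$ carry no factor $e^{|z|^2}$, as recorded in Remark \ref{Re1}.

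Summing over $i,j$, equating with $[e^{|z|^2}-q_m(|z|^2)]\,f(z)\overline{g(z)}$, and collecting the coefficient of $e^{|z|^2}$, I expect to arrive at
\[
\sum_{i,j}e^{A_iz+\overline{B_jz}}\Bigl[\,p_i(z)\overline{q_j(z)}-e^{A_i\overline{B_j}}\,q_j^{*}(\partial_z+\overline{z}+z)\,p_i(z+\overline{B_j})\,\Bigr]=0.
\]
Since the pairs $(A_i,\overline{B_j})$ are pairwise distinct, the family $\{e^{A_iz+\overline{B_j}\,\overline{z}}\}_{i,j}$ is linearly independent over $\mathbb{C}[z,\overline{z}]$, so each bracket must vanish. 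This is exactly equation \eqref{E33} of Lemma \ref{LL4} for every individual pair $(i,j)$; invoking that lemma pair-by-pair yields $e^{A_i\overline{B_j}}=1$ for all $i,j$, along with the rigidity constraints that $q_j$ is constant whenever some $A_i\neq 0$, and $p_i$ is constant whenever some $B_j\neq 0$.

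A short case analysis then completes the argument. If all $A_i=0$, then (since the $A_i$ are distinct) $f$ reduces to the polynomial $p_1$; taking adjoints of $T_fT_{\overline{g}}=T_{f\overline{g}}$ gives $(T_g,T_{\overline{f}}]=0$, and Lemma \ref{LL6} forces one of $f,g$ to be constant. The case where all $B_j=0$ is symmetric. Otherwise some $A_i\neq 0$ and some $B_j\neq 0$, so every $p_i$ and every $q_j$ is constant; consequently $f,g\in\mathcal{D}_1$, and Proposition \ref{pp2} delivers the conclusion. The converse direction is immediate. The main obstacle I anticipate is the $e^{|z|^2}$-coefficient extraction in the presence of a double sum: the crux is the linear independence of the distinct-pair exponentials $e^{A_iz+\overline{B_j}\,\overline{z}}$ over $\mathbb{C}[z,\overline{z}]$, since that is what decouples the problem into the single-term equation already settled by Lemma \ref{LL4}.
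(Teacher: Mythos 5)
Your proposal is correct and follows essentially the same route as the paper's proof: expand $f,g$ in the $\mathcal{A}_1$-form, apply Lemma \ref{LL5} to compute the Berezin transform term by term, isolate the coefficient of $e^{|z|^2}$, decouple via linear independence of the exponentials $e^{A_iz+\overline{B_j}\overline{z}}$ over the polynomials, and reduce each pair to the single-term equation of Lemma \ref{LL4}. The only (cosmetic) difference is in the final case analysis, where you route the all-$A_i=0$ case through adjoints and Lemma \ref{LL6} rather than citing Proposition \ref{pp1}; both close the argument with Proposition \ref{pp2}.
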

\begin{proof}
The sufficient direction is clear. To prove the necessary direction.
Using the following notations
$$R_{A,B,p,q}(z,\overline{z})=e^{A\overline{B}
}q^*(\partial_z+\overline{z}+z)p(z+\overline{B}).$$
By (\ref{E34}) , (\ref{E35}), and the proof of Lemma \ref{LL4}, one can see that
\begin{align*}
&\quad \sum_{i,k} p_i(z)\overline{q_k(z)}[e^{|z|^2}-q_m(|z|^2)]e^{A_iz+\overline{B_kz}}\nonumber\\
&= \sum_{i,k} e^{(A_i+\overline{z})(\overline{B_k}+z)}q^*_k(\partial_z+\overline{z}+z)p(z+\overline{B_k})+Q_2^{ik}-Q_3^{ik},
\end{align*}
where $Q_2^{ik}$ and  $Q_3^{ik}$ as defined in (\ref{E35}). Recall that the "coefficient" of  $e^{-|z|^2}$ in $Q_2^{ik}$ and $Q_3^{ik}$ are both zero.  It follows that
$$\sum_{i,k} p_i(z)\overline{q_k(z)}e^{A_iz+\overline{B_kz}}=\sum_{i,k} e^{A_i\overline{B_k}+A_iz+\overline{B_k}
\overline{z}}q_k^*(\partial_z+\overline{z}+z)p(z+\overline{B_k}),$$
or equivalently,
$$\sum_{i,j}\bigg[R_{A_i,B_j,p_i,q_k}(z,\overline{z})-p_i(z)\overline{q_k}(z)\bigg]e^{A_iz+\overline{B_jz}}=0.$$
By the proof of Theorem 3.5 in \cite{Bauer1} again, we can obtain
$$0=R_{A_i,B_j,p_i,q_k}(z,\overline{z})-p_i(z)\overline{q_k}(z)$$
for each $i$ and $j$.
Form the proof of the Lemma 3.4 in \cite{Bauer1}, we have $e^{A_i\overline{B_k}}=1$ for each $i$ and $k$. Moreover, the following statements hold:
(a) If there is a  $A_i\neq 0,$ then each $q_k$ is a constant;
(b) If there is a $B_k\neq0,$ then each $p_i$ is constant.
By Proposition \ref{pp2} and \ref{pp1},
we obtain the result.
\end{proof}
If we set $m=0$,  we will obtain Theorem \ref{A} by the proof of Proposition \ref{pp1} and  Theorem \ref{TT2}. We remark that Theorem \ref{TT2} can be extend to the Fock-Sobolev spaces on $\mathbb{C}^n$ by Berezin transform.
\begin{proofb}
Theorem B comes follow Theorem \ref{TT2}.
\end{proofb}
\section{Bounded Semi-Commutants }
In this section, we will consider the bounded semi-commutants of Toeplitz operators on Fock-Sobolev spaces. In the rest of the paper, the $\langle \cdot,\cdot\rangle_{F^2}$ denote the inner product on Fock space, and $\|\cdot\|_{2,m}$ denote the norm of Fock-Sobolev space. Recall that $a\in 2 \mathbb{N} \pi \mathrm{i}$ means that
$a= 2n\pi \mathrm{i}$ for any integer $n.$
\begin{lemma}\label{pp4}
Suppose  $A$ and $B$ are  non-zero complex numbers. If $A\overline{B}\in 2 \mathbb{N} \pi \mathrm{i}/\{0\}$, then $(T_{e^{Az}},T_{e^{\overline{Bz}}}]$ is bounded on $F^{2,m}$.
\end{lemma}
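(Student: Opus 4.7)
The plan is to reduce Lemma \ref{pp4} to the Fock-space statement Theorem A by exploiting an isometric identification of $F^{2,m}$ with a codimension-$m$ subspace of the Fock space $F^{2}$. Define $U:F^{2,m}\to F^{2}$ by $Uh(z)=z^{m}h(z)/\sqrt{m!}$. Directly comparing $\|h\|_{F^{2,m}}^{2}=\tfrac{1}{\pi m!}\int_{\mathbb{C}}|h|^{2}|z|^{2m}e^{-|z|^{2}}dA$ with the Fock norm shows $U$ is an isometry whose range is the closed subspace $F^{2}_{m}:=\{g\in F^{2}:g^{(j)}(0)=0\text{ for }0\le j<m\}$. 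The orthogonal complement of $F^{2}_{m}$ in $F^{2}$ is the $m$-dimensional space $\mathrm{span}\{1,z,\ldots,z^{m-1}\}$; let $Q:F^{2}\to F^{2}_{m}$ denote the associated orthogonal projection.

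The core structural step is the conjugation identity
\begin{equation*}
U\,T_{\phi}^{F^{2,m}}\,U^{-1}=Q\,T_{\phi}^{F^{2}}\big|_{F^{2}_{m}},
\end{equation*}
valid for every symbol $\phi$ for which both Toeplitz operators are densely defined, including $\phi\in\{e^{Az},\ e^{\overline B\overline z},\ e^{Az}e^{\overline B\overline z}\}$. To prove it I extend $U$ by the same formula to a surjective isometry $\tilde U:L^{2}_{m}\to L^{2}(\mathbb{C},\pi^{-1}e^{-|z|^{2}}dA)$. This extension intertwines multiplication by $\phi$ with itself, and $\tilde U P^{F^{2,m}}\tilde U^{-1}$ coincides with $QP^{F^{2}}$ because both operators realise the orthogonal projection of $L^{2}(\mathbb{C},\pi^{-1}e^{-|z|^{2}}dA)$ onto the same closed subspace $F^{2}_{m}$. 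Substituting into $T_{\phi}^{F^{2,m}}=P^{F^{2,m}}M_{\phi}$ then gives the identity.

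Applying this identity to each piece of the semi-commutant and writing $QT_{\overline g}^{F^{2}}=T_{\overline g}^{F^{2}}-(I-Q)T_{\overline g}^{F^{2}}$ yields the decomposition
\begin{equation*}
U\,(T_{f},T_{\overline g}]^{F^{2,m}}\,U^{-1}=Q\,(T_{f},T_{\overline g}]^{F^{2}}\big|_{F^{2}_{m}}+Q\,T_{f}^{F^{2}}(I-Q)T_{\overline g}^{F^{2}}\big|_{F^{2}_{m}}.
\end{equation*}
Since $f=e^{Az}$ and $g=e^{Bz}$ are Fock reproducing kernels at $\overline A$ and $\overline B$, and the hypothesis $A\overline B\in 2\mathbb{N}\pi\mathrm{i}\setminus\{0\}$ gives $\overline A B=\overline{A\overline B}\in 2\mathbb{N}\pi\mathrm{i}$, Theorem A yields $(T_{f},T_{\overline g}]^{F^{2}}=0$, so the first summand vanishes. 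The second summand factors through the $m$-dimensional range of $I-Q$ and therefore has rank at most $m$; writing it as a sum of $m$ rank-one operators and invoking Cauchy--Schwarz in $F^{2}$ produces the explicit bound
\begin{equation*}
\bigl\|Q T_{f}^{F^{2}}(I-Q)T_{\overline g}^{F^{2}}h\bigr\|_{F^{2}}\le\Bigl(\sum_{i=0}^{m-1}\tfrac{\|e^{Az}z^{i}\|_{F^{2}}\|e^{Bz}z^{i}\|_{F^{2}}}{i!}\Bigr)\|h\|_{F^{2}},
\end{equation*}
which pulls back to $F^{2,m}$ through the isometry $U$. In particular, $(T_{f},T_{\overline g}]^{F^{2,m}}$ has rank at most $m$ and so is both bounded and compact.

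The principal obstacle is the conjugation identity itself, which requires verifying that two a priori different orthogonal projections on $L^{2}(\mathbb{C},\pi^{-1}e^{-|z|^{2}}dA)$ coincide because they project onto the same subspace; once this is established, the remainder of the argument is a rank count together with routine estimates, with no need to track matrix elements of the semi-commutant explicitly.
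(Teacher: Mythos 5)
Your proposal is correct, and it takes a genuinely different route from the paper. The paper proves this lemma by direct computation: it expands $T_{e^{\overline{Bz}}}f(z)$ and $T_{e^{Az+\overline{Bz}}}f(z)$ using the reproducing kernel of $F^{2,m}$, isolates the Fock-type part (which cancels after using $e^{A\overline{B}}=1$), and is left with the difference $e^{Az}S_1(z)-S_2(z)$, where $S_1,S_2$ are integrals against the correction polynomial $q_m(\overline{z}w)$; boundedness then follows from a Cauchy--Bunyakovsky--Schwarz estimate on this explicit kernel expression. Your argument replaces this computation by a structural reduction: the weighted shift $Uh(z)=z^{m}h(z)/\sqrt{m!}$ identifies $F^{2,m}$ isometrically with the codimension-$m$ subspace $F^{2}_{m}\subset F^{2}$, the conjugation identity $UT_{\phi}^{F^{2,m}}U^{-1}=QT_{\phi}^{F^{2}}|_{F^{2}_{m}}$ (which you justify correctly by noting that $QP^{F^{2}}$ and $\tilde UP^{F^{2,m}}\tilde U^{-1}$ are both the orthogonal projection onto $F^{2}_{m}$) converts the Fock--Sobolev semi-commutant into the Fock semi-commutant plus the correction $QT_{f}^{F^{2}}(I-Q)T_{\overline g}^{F^{2}}|_{F^{2}_{m}}$, and after invoking Theorem A to kill the Fock semi-commutant the correction is visibly a rank-$\le m$ operator built from the $F^{2}$-functions $e^{Az}z^{i}$ and $e^{Bz}z^{i}$, hence bounded. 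In fact the two proofs are secretly computing the same object --- the $S_1,S_2$ terms of the paper are precisely the matrix elements of $(I-Q)T_{\overline g}$ against $\{1,z,\dots,z^{m-1}\}$ --- but your formulation yields more: it shows $(T_{e^{Az}},T_{e^{\overline{Bz}}}]$ is not merely bounded but finite-rank (rank at most $m$), hence compact, a strengthening the paper's estimate does not record. The conjugation identity you establish would also streamline the proofs of several other statements in Sections 2--4 and is worth stating as a standalone lemma.
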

\begin{proof}
For any $f\in F^{2,m},$ we have
$$T_{e^{\overline{Bz}}}f(z)=\frac{1}{\pi m!}\int_{\mathbb{C}} e^{\overline{Bw}}f(w)K(z,w)|w|^{2m}e^{-|w|^2}dA(w)$$
and
 $$T_{e^{Az+\overline{Bz}}}f(z)=\frac{1}{\pi m!}\int_{\mathbb{C}} e^{Aw+\overline{Bw}}f(w)K(z,w)|w|^{2m}e^{-|w|^2}dA(w).$$
Let $z\neq 0,$ it follows that
\begin{align*}
T_{e^{\overline{Bz}}}f(z)&=\frac{1}{z^m\pi }\int_{\mathbb{C}} w^me^{\overline{Bw}}f(w)[{e^{z\overline{w}}-q_m(z\overline{w})}]e^{-|w|^2}dA(w)\\
&=\frac{1}{z^m}\langle w^mf(w),e^{w\overline{z}+Bw}\rangle_{F_2}-\frac{1}{z^m} \langle w^mf(w),q_m(\overline{z}w)e^{Bw}\rangle_{F^2}\\
&=\frac{1}{z^m} (z+\overline{B})^mf(z+\overline{B})-S_1(z),
\end{align*}
where $S_1(z)=\frac{1}{z^m} \langle w^mf(w),q_m(\overline{z}w)e^{Bw}\rangle_{F^2}.$ We also have
\begin{align*}
T_{e^{Az+\overline{Bz}}}f(z)&=\frac{1}{z^m\pi }\int_{\mathbb{C}} w^me^{Aw+\overline{Bw}}f(w)[{e^{z\overline{w}}-q_m(z\overline{w})}]e^{-|w|^2}dA(w)\\
&=\frac{1}{z^m}\langle w^mf(w)e^{Aw},e^{w\overline{z}+Bw}\rangle_{F^2}-\frac{1}{z^m} \langle w^mf(w)e^{Aw},q_m(\overline{z}w)e^{Bw}\rangle_{F^2}\\
&=\frac{1}{z^m} (z+\overline{B})^mf(z+\overline{B})e^{Az+A\overline{B}}-S_2(z),
\end{align*}
where $S_2=\frac{1}{z^m} \langle w^mf(w)e^{Aw},q_m(\overline{z}w)e^{Bw}\rangle_{F^2}.$ Note that $e^{A\overline{B}}=1.$ Then, from the above
equation, we obtain
\begin{equation}\label{E46}
T_{e^{Az+\overline{Bz}}}f(z)-T_{e^{Az}}T_{e^{\overline{Bz}}}f(z)=e^{Az}S_1(z)-S_2(z).
\end{equation}
 A careful calculation gives
\begin{align*}
|e^{Az}S_1(z)-S_2(z)|&\leq \frac{1}{\pi|z|^m}\int_{\mathbb{C}}|w^mf(w)||e^{Bw}||e^{Az}-e^{Aw}||q_m(z\overline{w})|w^{-|w|^2}dA(w)\nonumber\\
&\lesssim \int_{\mathbb{C}}|w^mf(w)||e^{Bw}||e^{Az}-e^{Aw}|\frac{1+|zw|^m}{|z|^m}e^{-|w|^2}dA(w)\nonumber\\
&\lesssim \int_{\mathbb{C}/\{0\}}|w^mf(w)||e^{Bw}||e^{Az}-e^{Aw}|\frac{|zw|^m}{|z|^m}e^{-|w|^2}dA(w)\nonumber
\end{align*}
So we have
\begin{align}\label{E45}
|e^{Az}S_1(z)-S_2(z)|
&\lesssim \int_{\mathbb{C}} |f(w)||e^{Bw}||e^{Az}-e^{Aw}||w|^{2m}e^{-|w|^2}dA(w).
\end{align}
By (\ref{E45}) and Cauchy-Bunyakovsky-Schwarz Inequality, one can see that
\begin{align*}
|e^{Az}S_1(z)-S_2(z)|&\lesssim \|f\|_{2,m}\|e^{Bw}(e^{Az}-e^{Aw})\|_{2,m}\nonumber\\
&\leq \|f\|_{2,m} (|e^{Az}|\|e^{Bw}\|_{2,m}+\|e^{(A+B)w}\|_{2,m})\nonumber\\
&\lesssim |e^{Az}| \|f\|_{2,m}.
\end{align*}
This together with (\ref{E46}) shows that
\begin{align*}
\|T_{e^{Az+\overline{Bz}}}f-T_{e^{Az}}T_{e^{\overline{Bz}}}f(z)\|^2_{2,m}
&=\frac{1}{m!\pi}\int_{\mathbb{C}}|e^{Az}S_1(z)-S_2(z)|^2|z|^{2m}e^{-|z|^2}dA(z)\\
&\lesssim    \frac{\|f\|_{2,m}^2}{m!\pi}\int_{\mathbb{C}}|e^{Az}|^2|z|^{2m}e^{-|z|^2}dA(z)\\
&= \|f\|_{2,m}^2  \|e^{Az}\|_{2,m}^2,
\end{align*}
which implies $T_{e^{Az+\overline{Bz}}}-T_{e^{Az}}T_{e^{\overline{Bz}}}$ is bounded on $F^{2,m}.$ The proof is complete.
\end{proof}
For the linear exponential functions, we have the follow result for the bounded semi-commutants.
\begin{prop}\label{pp5}
Suppose $f(z)=e^{Az}$ and $g(z)=e^{Bz}$. Then $(T_{f},T_{\overline{g}}]$ is bounded if and only if one of the following hold:\\
(a) one of $f$ and $g$ is a constant.\\
(b) $A\overline{B}\in2 \mathbb{N} \pi \mathrm{i}$.\\
(c) $A=-B.$
\end{prop}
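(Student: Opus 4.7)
The plan is to exploit the decomposition of $(T_f,T_{\overline g}]$ implicit in the proof of Lemma \ref{pp4}. Running that computation \emph{without} assuming $e^{A\overline B}=1$ yields, for $z\neq 0$ and $h\in\mathcal D$, the identity
\begin{equation*}
(T_{f\overline g}-T_fT_{\overline g})h(z)=\frac{(z+\overline B)^m}{z^m}h(z+\overline B)\,e^{Az}\bigl(e^{A\overline B}-1\bigr)+\bigl(e^{Az}S_1(z)-S_2(z)\bigr).
\end{equation*}
The Cauchy--Schwarz bound $\|e^{Az}S_1-S_2\|_{2,m}\lesssim\|h\|_{2,m}$ derived in Lemma \ref{pp4} never used the condition $e^{A\overline B}=1$, so $(T_f,T_{\overline g}]$ is bounded on $F^{2,m}$ if and only if the first summand, call it $R_{A,B}h$, defines a bounded operator.

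For sufficiency, (a) is immediate since a constant symbol makes one Toeplitz factor scalar, and (b) forces $e^{A\overline B}=1$, hence $R_{A,B}=0$, recovering Lemma \ref{pp4}. For (c), the substitution $w=z+\overline B$ rewrites $\|R_{A,B}h\|_{2,m}^2$ as a positive constant times
\begin{equation*}
\int_{\mathbb C}|w|^{2m}|h(w)|^2\,e^{-|w|^2+2\,\mathrm{Re}((A+B)w)}\,dA(w);
\end{equation*}
when $A=-B$ the middle exponential collapses to $1$, so the integral becomes a constant multiple of $\|h\|_{2,m}^2$ and $R_{A,B}$ is bounded.

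For necessity, assume neither (a) nor (b) holds, so $A,B\neq 0$ and $e^{A\overline B}-1\neq 0$. Boundedness of $R_{A,B}$ is then equivalent, by the same change of variables, to boundedness on $F^{2,m}$ of the multiplication operator $M_{e^{(A+B)w}}$. The main technical obstacle is to rule this out unless $A+B=0$: I would test on the family $h_t(w)=e^{t\overline{(A+B)}w}$ as $t\to\infty$. Explicit Gaussian integration yields $\|h_t\|_{2,m}^2$ as a polynomial in $|A+B|^2t^2$ times $e^{|A+B|^2t^2}$, while $\|e^{(A+B)w}h_t\|_{2,m}^2$ is the same expression with $t$ replaced by $t+1$; the ratio grows like $e^{|A+B|^2(1+2t)}$ and is unbounded as $t\to\infty$. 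This forces $A+B=0$, i.e.\ case (c), completing the proof.
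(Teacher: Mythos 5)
Your proof is correct modulo one small slip, and it takes a genuinely different route from the paper's. For necessity the paper uses the Berezin transform: since boundedness of $(T_f,T_{\overline g}]$ forces boundedness of $\widetilde{(T_f,T_{\overline g}]}$, the explicit computation of $\widetilde{f\overline g}$ from the proof of Lemma \ref{LL3} yields a lower bound of the form $|e^{A\overline B}-1|\,|e^{(A+B)z}|-c$, from which the dichotomy $e^{A\overline B}=1$ or $A=-B$ follows. You instead carry the operator decomposition of Lemma \ref{pp4} one step further, writing the semi-commutant as a universally bounded operator plus $R_{A,B}$ and reducing boundedness, via the change of variables $w=z+\overline B$, to boundedness of multiplication by $e^{(A+B)w}$ on $F^{2,m}$. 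Your route has the advantage of being more self-contained: it proves sufficiency of case (c) directly from the change-of-variables identity, whereas the paper invokes the solution to Sarason's Toeplitz product problem from \cite{Chen} for that case, and for necessity it avoids the Berezin-transform criterion entirely.

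The one slip is in the test family. You want $h_t(w)=e^{t(A+B)w}$ rather than $e^{t\overline{(A+B)}w}$: with your choice, $e^{(A+B)w}h_t(w)=e^{((A+B)+t\overline{(A+B)})w}$, which is not $h_{t+1}(w)$ unless $A+B$ is real, so the ``replace $t$ by $t+1$'' claim does not hold and the exponent difference $|A{+}B{+}t\overline{(A{+}B)}|^2-t^2|A{+}B|^2=|A{+}B|^2+2t\,\mathrm{Re}\big((A{+}B)^2\big)$ can even tend to $-\infty$. With $h_t(w)=e^{t(A+B)w}$ the identity $e^{(A+B)w}h_t=h_{t+1}$ is exact and the Gaussian computation gives precisely the claimed growth $e^{|A+B|^2(2t+1)}$. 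Alternatively, the test-function argument can be shortcut by observing that $M_u^*K_{m,z}=\overline{u(z)}K_{m,z}$ for an entire multiplier $u$, so boundedness of $M_{e^{(A+B)w}}$ forces $e^{(A+B)z}$ to be bounded on $\mathbb C$, hence constant, hence $A+B=0$.
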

\begin{proof}
The sufficiency follows from Lemma \ref{pp4} and the main result of \cite{Chen}.
Now, assume $(T_{f},T_{\overline{g}}]$ is bounded and $AB\neq0$, then
$$\widetilde{(T_{f},T_{\overline{g}}]}(z)=\widetilde{f\overline{g}}(z)-f(z)\overline{g}(z)$$
is also bounded. We compute the Berezin transform
\begin{equation}\label{E39}
\widetilde{f\overline{g}}(z)-f(z)\overline{g}(z)=\frac{|z|^{2m}}
{m![e^{|z|^2}-q_m(|z|^2)]}Q(z,\overline{z})-e^{Az+\overline{B}\overline{z}},
\end{equation}
where $Q(z,\overline{z})$ is defined as before in Lemma \ref{LL3}.  Here,
\begin{align*}
\frac{|z|^{2m}}{m!}Q(z,\overline{z})&=e^{(A+\overline{z})
(\overline{B}+z)}-e^{\overline{B}(A+\overline{z})}
q_m(zA+|z|^2)\nonumber\\
&\quad-e^{A(\overline{B}+z)}
q_m(\overline{z}\overline{B}+|z|^2)+p(z,\overline{z}).
\end{align*}
If $z=0$, then $\widetilde{(T_{f},T_{\overline{g}}]}(0)$ is bounded.
So, we always assume $z\neq0.$

It follows that
\begin{align}\label{E40}
\frac{|z|^{2m}}
{m![e^{|z|^2}-q_m(|z|^2)]}Q(z,\overline{z})&=\frac{e^{(A+\overline{z})
(\overline{B}+z)}-e^{\overline{B}(A+\overline{z})}
q_m(zA+|z|^2)}{e^{|z|^2}-q_m(|z|^2)}\\
&\quad+\frac{p(z,\overline{z})-e^{A(\overline{B}+z)}
q_m(\overline{z}\overline{B}+|z|^2)}{e^{|z|^2}-q_m(|z|^2)}\nonumber.
\end{align}
Note
\begin{equation}\label{E41}
G(A,B)(z)=\frac{p(z,\overline{z})-e^{\overline{B}(A+\overline{z})}
q_m(zA+|z|^2)-e^{A(\overline{B}+z)}
q_m(\overline{z}\overline{B}+|z|^2)}{e^{|z|^2}-q_m(|z|^2)}.
\end{equation}
Clearly, $G(A,B)$  is bounded for $z\neq0$.

It is clear that
\begin{align*}
|\widetilde{(T_{f},T_{\overline{g}}]}|\geq|\widetilde{f\overline{g}}(z)|
-|f(z)\overline{g}(z)|.
\end{align*}
By (\ref{E40}), (\ref{E41} )and  the boundedness of $G(A,B)(z)$,
\begin{align*}
|\widetilde{(T_{f},T_{\overline{g}}]}|&
\geq\frac{|e^{(A+\overline{z})
(\overline{B}+z)}|}{e^{|z|^2}-q_m(|z|^2)}
-|e^{Az+\overline{Bz}}|-c\\
&=\frac{|e^{|z|^2+A\overline{B}+Az+\overline{Bz}}|}{e^{|z|^2}-q_m(|z|^2)}-|e^{Az+\overline{Bz}}|-c\\
&> \frac{|e^{|z|^2+A\overline{B}+Az+\overline{Bz}}|}{e^{|z|^2}}-|e^{Az+\overline{Bz}}|-c\\
&=|e^{A\overline{B}+Az+\overline{Bz}}|-|e^{Az+\overline{Bz}}|-c\\
&=|e^{A\overline{B}}-1||e^{Az+\overline{Bz}}|-c\\
&=|e^{A\overline{B}}-1||e^{(A+B)z}|-c
\end{align*}
where $c$ is a nonnegative constant. Since $\widetilde{(T_{f},T_{\overline{g}}]}(z)$ is bounded, then
\begin{equation*}
|(e^{A\overline{B}}-1)e^{(A+B)z}|<\infty.
\end{equation*}
This shows that $e^{A\overline{B}}=1$ or $A=-B.$ This completes the proof.
\end{proof}
 As an immediate consequence of Proposition \ref{pp5}, we obtain the following theorem, which recover the result in \cite{MA}.
\begin{theorem}\label{TT3}
Suppose $f(z)=p(z)e^{Az}$, $g(z)=q(z)e^{Bz}$ and $p,q\in \mathcal{P} \setminus \{0\}$.
Then $(T_f,T_{\overline{g}}]$ is bounded  if and only if one of  following holds.\\
(a) one of $f$ and $g$ is constant.\\
(b) $f(z)=e^{Az}$ and $g(z)=e^{Bz}$, where $A\overline{B}\in2\mathbb{N}\pi \mathrm{i}$.\\
(c) $f(z)=e^{Az}$ and $g(z)=e^{Bz}$ with $A=-B.$\\
(d) $f$ and $g$ are linear holomorphic polynomials.
\end{theorem}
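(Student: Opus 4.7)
The plan is to adapt the Berezin-transform method of Proposition \ref{pp5} to the full $\mathcal{A}_1$-symbol case. First, using Lemma \ref{LL5} as in the proof of Lemma \ref{LL4}, one writes
$$
\frac{|z|^{2m} K_m(z,z)}{m!}\widetilde{f\overline{g}}(z)
= e^{(A+\overline{z})(\overline{B}+z)}\, q^{*}(\partial_z+\overline{z}+z)\, p(z+\overline{B}) + Q_2 - Q_3,
$$
where $Q_2, Q_3$ are the polynomial-times-exponential analogues of the integrals in Lemma \ref{LL4}. Repeating the Cauchy--Schwarz bound of Remark \ref{Re1} with the extra polynomial factor absorbed gives $|Q_2|, |Q_3| \lesssim (1+|z|)^N e^{|z|^2/2}$ for some $N$, so that $|Q_j|/(e^{|z|^2}-q_m(|z|^2))$ stays bounded on $\mathbb{C}\setminus\{0\}$. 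Setting $R(z,\overline{z}) := q^{*}(\partial_z+\overline{z}+z)p(z+\overline{B})$ and subtracting $f\overline{g}=p\overline{q}\, e^{Az+\overline{Bz}}$, this produces the key asymptotic
$$
\widetilde{(T_f, T_{\overline{g}}]}(z) = e^{Az+\overline{Bz}}\bigl[e^{A\overline{B}} R(z,\overline{z}) - p(z)\overline{q(z)}\bigr] + O(1), \qquad |z|\to\infty.
$$

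Sufficiency of the four alternatives is brief: (a) is immediate; for (b) and (c) the polynomial factors are constants and the claim follows from Proposition \ref{pp5}; for (d) I appeal to the main result of \cite{Chen}, which characterizes boundedness of Hankel products with holomorphic linear symbols on $F^{2,m}$ and was already used inside Proposition \ref{pp5}. For the necessity direction the argument splits according to whether $A+B=0$.

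If $A+B\neq 0$, then $|e^{Az+\overline{Bz}}| = e^{\mathrm{Re}((A+B)z)}$ grows exponentially on a half-plane, while the bracketed factor $e^{A\overline{B}} R - p\overline{q}$ is a polynomial in $(z,\overline{z})$. For the displayed asymptotic to be bounded, this polynomial must vanish identically --- exactly the identity (\ref{E33}) analyzed in Lemma \ref{LL4}. That lemma then yields $e^{A\overline{B}}=1$, with $q$ constant when $A\neq 0$ and $p$ constant when $B\neq 0$. A brief bookkeeping on which of $A,B$ vanish shows the resulting symbol pairs are exactly alternatives (a) and (b).

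The remaining case is $A+B=0$. When $A=B=0$ this is the pure polynomial subcase, and the claim that $(T_p,T_{\overline{q}}]$ is bounded iff $p,q$ are both linear or one is constant is precisely alternative (d), supplied by \cite{Chen}. The subtle subcase is $A\neq 0$ with $B=-A$: here $|e^{Az+\overline{Bz}}|\equiv 1$, so boundedness forces the polynomial $e^{-|A|^2}R(z,\overline{z}) - p(z)\overline{q(z)}$ itself to be bounded, hence constant on $\mathbb{C}$. This is where I expect the main obstacle to lie --- one must show via a bi-degree analysis that no non-constant choice of $p$ or $q$ satisfies this identity when $e^{-|A|^2}\neq 1$. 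A direct check on the top bi-degree monomials suffices: $p(z)\overline{q(z)}$ is purely bi-graded of type $(\deg p,\deg q)$, while $q^{*}(\partial_z+\overline{z}+z)p(z-\overline{A})$ produces, through the symmetric operator $\overline{z}+z$, monomials of every intermediate bi-degree, and these cross-terms cannot cancel against the single pure term. The only resolution is $\deg p = \deg q = 0$, which combined with Proposition \ref{pp5} drops us into alternative (c).
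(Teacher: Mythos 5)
Your overall strategy---pass to the Berezin transform, isolate the polynomial coefficient of $e^{|z|^2}$ using Lemma \ref{LL5} and Remark \ref{Re1}, and then case-split on the growth of $|e^{(A+B)z}|$---is exactly the paper's proof of Theorem \ref{TT3}; the key asymptotic you write down is the paper's inequality (\ref{EE7}). The one place you genuinely diverge is the subcase $A=-B\neq 0$: the paper disposes of it by appealing to the proof of Lemma 3.4 in \cite{Bauer1}, whereas you sketch a direct bi-degree argument, which buys you a self-contained proof at the cost of redoing that lemma. Two things to tighten there. First, the assertion that $p(z)\overline{q(z)}$ is \emph{purely} bi-graded of type $(\deg p,\deg q)$ is false; it is a sum of monomials of all bi-degrees $(i,j)$ with $i\le\deg p$, $j\le\deg q$. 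The correct version of the argument looks at the top pure $z$-degree: the degree-$\deg q$ term of $q^*(\partial_z+\overline{z}+z)$, acting only through its $z$-part, contributes a nonzero $z^{\deg p+\deg q}$ monomial to $R$, while $p\overline{q}$ has $z$-degree only $\deg p$; if $\deg q>0$ the difference therefore cannot be constant, forcing $q$ constant, and then the leading $z^{\deg p}$ coefficient $\overline{q_0}p_{\deg p}(e^{-|A|^2}-1)$ forces $\deg p=0$ since $A\neq 0$. Second, a citation slip: for the sufficiency and necessity in the pure polynomial case (alternative (d)) the paper invokes a modification of Theorem 19 in \cite{MA}, not \cite{Chen}; \cite{Chen} (Sarason's product problem on $F^{2,m}$) is what the paper uses inside Proposition \ref{pp5} for alternative (c), the case $A=-B$. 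With those two corrections your write-up matches the paper's argument.
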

\begin{proof}
The sufficiency is clear. We now prove the necessity.
Since $(T_f,T_{\overline{g}}]$ is bounded, then $\widetilde{(T_f,T_{\overline{g}}]}$ is bounded on $\mathbb{C}$. Namely,
$$|\widetilde{f\overline{g}}(z)-f(z)\overline{g(z)}|<\infty.$$

Now, we assume $z\neq0$, since $|\widetilde{(T_f,T_{\overline{g}}]}(0)|<\infty$.
It follows from (\ref{E34}) that
\begin{equation}\label{EEE1}
\bigg|\frac{e^{(A+\overline{z})(\overline{B}+z)}q^*(\partial_z+\overline{z}+z)p(z+\overline{B})
+Q_2-Q_3}{e^{|z|^2}-q_m(|z|^2)}-p(z)\overline{q(z)}e^{Az+\overline{Bz}}\bigg|<\infty,
\end{equation}
where $Q_2$ and  $Q_3$ as defined in (\ref{E35}). Using Remark \ref{Re1} and (\ref{EEE1}),
$$\bigg|\frac{e^{(A+\overline{z})(
\overline{B}+z)}q^*(\partial_z+\overline{z}+z)p(z+\overline{B})
-p(z)\overline{q(z)}[e^{|z|^2}-q_m(|z|^2)]e^{Az+\overline{Bz}}}{e^{|z|^2}-q_m(|z|^2)}\bigg|<\infty.$$
This implies
\begin{equation}\label{EE7}
|e^{Az+Bz}|\bigg|e^{A\overline{B}}q^*(\partial_z+\overline{z}+z)p(z+\overline{B})
-p(z)\overline{q(z)}\bigg|<\infty.
\end{equation}

If $A=B=0,$ then $f$ and $g$ are linear holomorphic polynomials by easily
modifying the proof of Theorem 19 in \cite{MA}.

If $A=0$ and $B\neq0$, then, by (\ref{EE7}),
$$|e^{Bz}|\bigg|q^*(\partial_z+\overline{z}+z)p(z+\overline{B})
-p(z)\overline{q(z)}\bigg|<\infty.$$
Note that $q^{*}$ is a polynomial in $z$ and $\overline{z}$. Then above equation shows that $$q^*(\partial_z+\overline{z}+z)p(z+\overline{B})
=p(z)\overline{q(z)}.$$
By the proof of Theorem \ref{TT2}, $p$ is nonzero constant. In this case,
$f(z)=f(0)$ and $g(z)=e^{Bz}$.

 In fact,  $A\overline{B}\neq0$ and $e^{Az+Bz}$ is unbounded on $\mathbb{C}$ claim that $$q^*(\partial_z+\overline{z}+z)p(z+\overline{B})-p(z)\overline{q(z)}=0.$$ The proof of Theorem \ref{TT2} shows that $p$ and $q$ must be constants and $e^{A\overline{B}}=1$. On the other hand, $e^{Az+Bz}$ is bounded together  with $A\overline{B}\neq0$ gives $$e^{A\overline{B}}q^*(\partial_z+\overline{z}+z)p(z+\overline{B})-p(z)\overline{q(z)}$$ is bounded on $\mathbb{C}$. By the proof of Lemma 3.4 in \cite{Bauer1}, we have $p$ and $q$ must be constants.
 To sum up, then $p$ and $q$ must be constants if $A\overline{B}\neq0$. In this case, we have $A=-B$ or $e^{A\overline{B}}=1$ by Proposition \ref{pp5}. This completes the proof.
\end{proof}

 Recall that
$$
D(f, g)(z)=\left[\widetilde{|f|^{2}}(z)-|\widetilde{f}(z)|^{2}\right]\left[\widetilde{|g|^{2}}(z)-|\widetilde{g}(z)|^{2}\right]
.$$ Now, we consider the conjecture about the Hankel products. We will show it is false on $F^{2,m}.$
\begin{theorem}
Let $f(z)=e^{2\pi\mathrm{ i}z }$ and $g(z)=e^z$. Then $(T_f,T_{\overline{g}}]=H_{\overline{f}}^*H_{\overline{g}}$ is bounded but $D(f,g)$ is unbounded on $\mathbb{C}$.
\end{theorem}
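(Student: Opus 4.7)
My plan is to dispatch the two assertions separately, each resting on one of the main tools already developed in this paper.

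For the boundedness of $(T_{f},T_{\overline{g}}]=H_{\overline{f}}^{*}H_{\overline{g}}$, I would simply invoke Lemma~\ref{pp4}: writing $f(z)=e^{Az}$ and $g(z)=e^{Bz}$ with $A=2\pi\mathrm{i}$ and $B=1$, one has $A\overline{B}=2\pi\mathrm{i}\in 2\mathbb{N}\pi\mathrm{i}\setminus\{0\}$, and the lemma then yields that the semi-commutant is bounded on $F^{2,m}$.

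For the unboundedness of $D(f,g)$, my strategy is to extract the leading asymptotic of $\widetilde{e^{Az+\overline{B}\overline{z}}}(z)$ as $|z|\to\infty$ from the identity already derived inside the proof of Lemma~\ref{LL3}, namely
\[
\widetilde{e^{Az+\overline{B}\overline{z}}}(z)=\frac{e^{(A+\overline{z})(\overline{B}+z)}-e^{\overline{B}(A+\overline{z})}q_{m}(zA+|z|^{2})-e^{A(\overline{B}+z)}q_{m}(\overline{z}\overline{B}+|z|^{2})+p(z,\overline{z})}{e^{|z|^{2}}-q_{m}(|z|^{2})}.
\]
The denominator is $e^{|z|^{2}}(1+o(1))$, while the numerator is dominated by its first term $e^{(A+\overline{z})(\overline{B}+z)}=e^{|z|^{2}}e^{A\overline{B}+Az+\overline{B}\overline{z}}$; the remaining three contributions have growth at most polynomial times $e^{O(|z|)}$, which is exactly the type of estimate already carried out in Remark~\ref{Re1}. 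Hence
\[
\widetilde{e^{Az+\overline{B}\overline{z}}}(z)=e^{A\overline{B}}\,e^{Az+\overline{B}\overline{z}}\bigl(1+o(1)\bigr)\qquad\text{as }|z|\to\infty.
\]
I then plug this into each of the four Berezin transforms appearing in $D(f,g)$. The relevant values of $A\overline{B}$ are: $4\pi^{2}$ for $|f|^{2}=e^{2\pi\mathrm{i}z-2\pi\mathrm{i}\overline{z}}$ (since $A=2\pi\mathrm{i}$, $\overline{B}=-2\pi\mathrm{i}$), $1$ for $|g|^{2}=e^{z+\overline{z}}$, and $0$ for both $\widetilde{f}=\widetilde{e^{2\pi\mathrm{i}z}}$ and $\widetilde{g}=\widetilde{e^{z}}$ (where $B=0$). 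Consequently,
\[
\widetilde{|f|^{2}}(z)-|\widetilde{f}(z)|^{2}\sim(e^{4\pi^{2}}-1)|f(z)|^{2},\qquad \widetilde{|g|^{2}}(z)-|\widetilde{g}(z)|^{2}\sim(e-1)|g(z)|^{2},
\]
so $D(f,g)(z)\sim(e^{4\pi^{2}}-1)(e-1)|f(z)|^{2}|g(z)|^{2}$ for large $|z|$. Evaluating along the positive real axis yields $|f(t)|\equiv 1$ and $|g(t)|^{2}=e^{2t}\to\infty$, which shows that $D(f,g)$ is unbounded on $\mathbb{C}$.

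The main technical obstacle will be turning this asymptotic into a uniform statement as $z$ escapes to infinity in every direction. This reduces to the bound $|q_{m}(\zeta)|\lesssim 1+|\zeta|^{m}$ together with the observation that the exponential factors $e^{\overline{B}(A+\overline{z})}$ and $e^{A(\overline{B}+z)}$ grow only like $e^{O(|z|)}$, so they are dwarfed by the $e^{|z|^{2}}$ of the denominator. Since Remark~\ref{Re1} already performs essentially this estimate for the analogous quantities $Q_{2}$ and $Q_{3}$, I do not expect any new idea to be required.
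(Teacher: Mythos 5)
Your proposal is correct but takes a genuinely different route from the paper in its unboundedness argument, so it's worth a comparison.

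For the boundedness of $(T_f,T_{\overline{g}}]$ your citation of Lemma~\ref{pp4} is even more direct than the paper's, which routes through Theorem~\ref{TT3} (itself a consequence of Lemma~\ref{pp4}); both are fine.

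For the unboundedness of $D(f,g)$ the paper does not touch Lemma~\ref{LL3} at all. Instead it invokes a local sub-mean-value lower bound from \cite{Qin1},
\[
\widetilde{|f|^{2}}(z)-|\widetilde{f}(z)|^{2}\;\gtrsim\;\int_{|z-w|<\varepsilon}|f(w)-f(z)|^{2}\,dA(w)
\quad\text{for $|z|$ large,}
\]
evaluates the integral in closed form for $f=e^{2\pi\mathrm{i}z}$ and $g=e^{z}$ to get the lower bounds $|e^{2\pi\mathrm{i}z}|^{2}$ and $|e^{z}|^{2}$, and then lets $z\to\infty$ in $\mathbb{C}$. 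This gives $D(f,g)\gtrsim|e^{(2\pi\mathrm{i}+1)z}|^{2}$ and is shorter because it never needs to control the $q_m$ correction terms in the kernel. Your approach extracts an asymptotic for $\widetilde{e^{Az+\overline{B}\overline{z}}}$ directly from the integral formula in Lemma~\ref{LL3}, which is entirely self-contained within this paper and has the virtue of making the constants $(e^{4\pi^{2}}-1)$ and $(e-1)$ explicit. The trade-off is that you must estimate the lower-order terms by hand, which the paper's cited lemma bundles away. Both routes reach the same conclusion.

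One point to tighten: the multiplicative form $\widetilde{e^{Az+\overline{B}\overline{z}}}(z)=e^{A\overline{B}}e^{Az+\overline{B}\overline{z}}\bigl(1+o(1)\bigr)$ is not uniformly valid. What the estimate you describe actually gives is an additive bound
\[
\widetilde{e^{Az+\overline{B}\overline{z}}}(z)-e^{A\overline{B}}e^{Az+\overline{B}\overline{z}}=O\!\left(\frac{(1+|z|)^{2m}e^{c|z|}}{e^{|z|^{2}}}\right)=o(1)\qquad(|z|\to\infty),
\]
and this cannot be converted to a relative $o(1)$ in directions where $e^{Az+\overline{B}\overline{z}}$ decays. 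Your argument is rescued because you restrict to the positive real axis, where $|e^{2\pi\mathrm{i}t}|^{2}\equiv 1$ and $|e^{t}|^{2}=e^{2t}\to\infty$, so for each of the four quantities the main term is bounded below away from zero (or grows) and the additive $o(1)$ is harmless. It would be cleaner to state the additive form and then specialize to $z=t>0$, rather than claiming the multiplicative asymptotic in general.
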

\begin{proof}
The boundedness of $(T_f,T_{\overline{g}}]$ follows from Theorem \ref{TT3}.
By Lemma 2.4 in \cite{Qin1}, there is a positive constant $\varepsilon$ independent of $z$ and $w$ such that
\begin{align*}
\|H_{\overline{f}}k_{m,z}\|^2&=\widetilde{|f|^2}(z)-|\widetilde{f}(z)|^2\\
&\gtrsim \int_{|z-w|<\varepsilon}|f(w)-f(z)|^2dA(w)
\end{align*}
for sufficiently large $|z|.$
It follows that
\begin{align*}
\|H_{\overline{f}}k_{m,z}\|^2&\gtrsim
\int_{|z-w|<\varepsilon}|f(w)-f(z)|^2dA(w)\\
&=\int_{|z-w|<\varepsilon} |e^{2\pi\mathrm{ i}(w-z)}-1|^2dA(w)\\
&=|e^{2\pi\mathrm{ i}z}|^2\int_{|w|<\varepsilon} |e^{2\pi\mathrm{ i}w}-1|^2dA(w)\\
&\gtrsim |e^{2\pi\mathrm{ i}z}|^2,
\end{align*}
since $0<\int_{|w|<\varepsilon} |e^{2\pi\mathrm{ i}w}-1|^2dv(w)<\infty.$ We also have
$$\|H_{\overline{g}}k_{m,z}\|^2\gtrsim |e^z|^2 .$$ Thus we have
$$\sup_{z\in \mathbb{C}} D(f,g)(z)\gtrsim \sup_{z\in \mathbb{C}} |e^{(2\pi\mathrm{ i}+1)z}|^2=\infty .$$ This proves the desired.
\end{proof}
\section{Open problems and Remarks}
In this work, we consider the semi-commutant of Toeplitz operators on Fock-Sobolev spaces.  The
main results show that there is the fundamental difference between the geometries of Fock and
Fock-Sobolev space. This result can be generalized to some weighted Fock spaces, whose reproducing kernel function can be decomposed into the kernel of the Fock space. We remark the results in Section 3 and 4 can be extend to the Fock-Sobolev spaces on $\mathbb{C}^n$.

 Note that
$$K_m(z,z)\simeq \frac{e^{|z|^2}}{1+|z|^{2m}}.$$ Combining these observation, we conclude that the properties of operator  on Fock-Sobolev space are the same as
it's properties on Fock space if we only use the estimate of reproducing kernel $K_m(z,z)$. For instance, Sarason's problem,  the boundedness and compactness of composition operators, the commuting Toeplitz operators with radial symbols, Ha-plitz product and so on, see \cite{Bauer2,Carswell,Chen,Cho2,Cho1,Choe,MA1,Qin1}. However, there is a fundamental difference between the geometries of Fock and
Fock-Sobolev space if we use the specific form of $K_m(z,w),$ see Theorem \ref{A}, Theorem A and Theorem B. So, it would be interesting to consider the following problem.  Before we
give this problem, we establish the following symbol space. Setting $m\neq0$, we define the symbol space $\mathcal{A}$:
$$\mathcal{A}=\{f:\sum_{i=1}^N p_i(z)K_m(z,A_i),\quad p_i\in \mathcal{P}\}.$$
Since $e^{\overline{a}z}=(\overline{a}z)^mK_m(z,a)+q_m(\overline{a}z),$ one can see that
$$\mathcal{D}\subset \mathcal{A},\quad \mathcal{A}_1\subseteq \mathcal{A} ,\quad \mathcal{A}\nsubseteq \mathcal{A}_1.$$
\begin{proc}
Let $m\neq0$, and suppose $f,g\in\mathcal{A}.$ What is the relationship between their symbols when two Toeplitz operators $T_f$ and $T_{\overline{g}}$
commute on $F^{2,m}?$
\end{proc}

Originally, we want to consider the semi-commutant of Toeplitz operators with $\mathcal{A}-$symbol. We
have no idea of how to solve this problem. The reason is that we don't even know the specific form of Berezin transform of $K_m(z,a)\overline{K_m(z,b)}$ on Fock-Sobolev space. We believe the essential difference is that the Berezin transform on Fock space can be written out precisely, but on
Fock-Sobolev  space little is known about the Berezin transform of $K_m(z,a)\overline{K_m(z,b)}$.

Another interesting problem is the Brown-Halmos Theorems on Fock-Sobolev spaces, see \cite{Brown,Le} for more information. In \cite{Bauer1}, the authors have considered the zero product problem on Fock space under the hypothesis $T_uT_v=T_vT_u=0$, here  $f,g,h,k\in \mathcal{A}_1$, $u=f+\overline{k}$  and $v=h+\overline{g}.$  We believe the hypothesis can be relaxed to $T_uT_v=0$, since $(T_f,T_{\overline{g}}]=(T_h,T_{\overline{k}}]=0$ only have trivial
solutions if $m\neq0.$ So, there is a  natural conjecture.
\begin{conj}
For $f,g,h,k\in \mathcal{A}_1$ put $u=f+\overline{k}$  and $v=h+\overline{g}.$ If $T_uT_v=0$ on $F^{2,m}$ and $m\neq0$, then either $u=0$ or $v=0$.
\end{conj}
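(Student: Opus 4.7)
The plan is to convert $T_uT_v=0$ into a Berezin-transform identity for the semicommutator $(T_h,T_{\overline{k}}]$, dispose of the easy case via Theorem~B combined with real-analyticity, and reduce the hard case to the $e^{|z|^2}$-coefficient extraction that drove Theorem~\ref{TT2}.

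First I verify the operator identity
\[
T_uT_v \;=\; T_{fh}+T_fT_{\overline{g}}+T_{\overline{k}h}+T_{\overline{k}\,\overline{g}} \;=\; T_{uv}-(T_f,T_{\overline{g}}],
\]
using $T_\varphi T_\psi=T_{\varphi\psi}$ when $\varphi,\psi$ are both analytic or both antianalytic, together with $T_{\overline{k}}T_h=T_{\overline{k}h}$ (since $T_h$ acts by multiplication by $h$). The hypothesis gives $(T_f,T_{\overline{g}}]=T_{uv}$. Taking the Berezin transform of both sides, using $T_{\overline{g}}k_{m,z}=\overline{g(z)}\,k_{m,z}$ so that $\widetilde{T_fT_{\overline{g}}}(z)=f(z)\overline{g(z)}$, and cancelling the two $\widetilde{f\overline{g}}$ terms that appear on each side, I arrive at the key identity
\[
\widetilde{h\overline{k}}(z)-h(z)\overline{k(z)} \;=\; -u(z)v(z),\qquad\text{i.e.,}\qquad\widetilde{(T_h,T_{\overline{k}}]}(z)=-u(z)v(z).
\]

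If one of $h,k$ is constant, Theorem~B forces the left-hand side to vanish identically, giving $uv\equiv 0$ on $\mathbb{C}$. Since $u=f+\overline{k}$ and $v=h+\overline{g}$ are real-analytic on $\mathbb{R}^2\cong\mathbb{C}$ and the zero set of any non-trivial real-analytic function has empty interior, $uv\equiv 0$ forces $u\equiv 0$ or $v\equiv 0$, as required. In the opposite case, where both $h$ and $k$ are non-constant, I aim for a contradiction. Writing $h=\sum_i p_i(z)e^{A_iz}$ and $k=\sum_j q_j(z)e^{C_jz}$ with distinct exponents and non-zero polynomial coefficients, I multiply the key identity by $e^{|z|^2}-q_m(|z|^2)$. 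Using the integral representation of $\widetilde{h\overline{k}}$ together with Lemma~\ref{LL5}, exactly as in the proofs of Lemma~\ref{LL3} and Lemma~\ref{LL4}, the coefficient of $e^{|z|^2}$ on the left becomes the Fock-space Berezin transform $\sum_{i,j}e^{A_i\overline{C_j}}\,q_j^*(\partial_z+\overline{z}+z)p_i(z+\overline{C_j})\,e^{A_iz+\overline{C_j}\,\overline{z}}$; on the right, $q_m(|z|^2)$ is polynomial, so the $e^{|z|^2}$-coefficient is simply $h\overline{k}-uv$. Equating, expanding $uv=fh+f\overline{g}+\overline{k}h+\overline{k}\,\overline{g}$, and cancelling $\overline{k}h=\sum p_i\overline{q_j}\,e^{A_iz+\overline{C_j}\,\overline{z}}$ yields
\[
fh+f\overline{g}+\overline{k}\,\overline{g}\;=\;-\sum_{i,j}e^{A_i\overline{C_j}}\,q_j^*(\partial_z+\overline{z}+z)p_i(z+\overline{C_j})\,e^{A_iz+\overline{C_j}\,\overline{z}}.
\]

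To close I apply the linear-independence argument from the proof of Theorem~3.5 in \cite{Bauer1} (used again in Theorem~\ref{TT2} above): the exponentials $\{e^{\alpha z+\overline{\beta}\,\overline{z}}\}$ are linearly independent over $\mathbb{C}[z,\overline{z}]$, so I match coefficients of each exponential type on both sides. In the generic situation where all $A_i,C_j$ are non-zero, the pure-holomorphic piece $fh$ and the pure-antiholomorphic piece $\overline{k}\,\overline{g}$ on the left have no exponential partner on the right, so $f\equiv 0$ and $g\equiv 0$; re-substituting $f=g=0$ reduces the identity to the requirement that every Bauer-Choe-Koo polynomial $q_j^*(\partial_z+\overline{z}+z)p_i(z+\overline{C_j})$ vanish identically, but by Lemma~\ref{LL5} each such quantity equals, up to a non-zero exponential factor, the Gaussian integral $\int p_i(w)\overline{q_j(w)}\,e^{(A_i+\overline{z})w+(\overline{C_j}+z)\overline{w}-|w|^2}\,dA(w)/\pi$, which is non-zero when $p_i,q_j$ are non-zero polynomials---contradicting the non-triviality of $h$ and $k$. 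The principal obstacle is the degenerate sub-case where $h$ or $k$ has a constant-polynomial summand (some $A_{i_0}=0$ or $C_{j_0}=0$), since then the exponential types of $fh$ or $\overline{k}\,\overline{g}$ overlap with right-hand-side terms; the key observation that pulls one through is that a pure-constant summand $q_{j_0}=c$ makes the corresponding $(i,j_0)$-contribution collapse via $p_i\overline{c}-\overline{c}\,p_i\equiv 0$, so that careful bookkeeping still reduces the matching to $f\equiv g\equiv 0$ and hence to the same Lemma~\ref{LL5} contradiction.
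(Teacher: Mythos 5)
First, a point of record: the statement you set out to prove is posed in Section 5 of the paper as an open conjecture; the paper gives no proof of it, so there is nothing to compare your argument against and it must stand entirely on its own. Your reduction is correct as far as it goes, and it is in the spirit of the paper's methods: the identity $T_uT_v=T_{uv}-(T_f,T_{\overline{g}}]$ is right, taking Berezin transforms and using $\widetilde{fh}=fh$, $\widetilde{\overline{k}\,\overline{g}}=\overline{k}\,\overline{g}$, $\widetilde{T_fT_{\overline{g}}}=f\overline{g}$ does give $\widetilde{h\overline{k}}-h\overline{k}=-uv$; the constant case via Theorem B plus real-analyticity is fine; and the extraction of the $e^{|z|^2}$-part after multiplying by $e^{|z|^2}-q_m(|z|^2)$ (justified as in the paper via Lemma \ref{LL5} and polarization) correctly yields $fh+f\overline{g}+\overline{k}\,\overline{g}=-\sum_{i,j}e^{A_i\overline{C_j}}S_{ij}(z,\overline{z})e^{A_iz+\overline{C_j}\overline{z}}$ with $S_{ij}=q_j^*(\partial_z+\overline{z}+z)p_i(z+\overline{C_j})\not\equiv0$.

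The genuine gap is in the final matching, which is exactly where the difficulty of the conjecture lives. Writing $f=\sum_s r_se^{F_sz}$, $g=\sum_t w_te^{G_tz}$, linear independence of the exponentials over $\mathbb{C}[z,\overline{z}]$ does \emph{not} give ``$f\equiv0$ and $g\equiv0$'' in the generic case. For the pure-holomorphic group, $f\overline{g}$ contributes whenever $g$ has an exponent-$0$ summand $w_{0}$, and $\overline{k}\,\overline{g}$ contributes to the type $(0,0)$ whenever some $G_t=-C_j$; the resulting relation is $f(h+\overline{w_{0}})+\overline{c_0}=0$, which admits solutions with $f\not\equiv0$ (for instance $h=\lambda e^{az}+\mu$, $f=\nu e^{-az}$ makes $f(h+\text{const})$ constant), so your claimed conclusion does not follow. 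More seriously, the mixed types $(A_i,C_j)$ need not vanish at all: whenever $f$ contains the exponent $A_i$ and $g$ contains $C_j$, the term $r_s\overline{w_t}\,e^{A_iz+\overline{C_j}\overline{z}}$ of $f\overline{g}$ can cancel $e^{A_i\overline{C_j}}S_{ij}e^{A_iz+\overline{C_j}\overline{z}}$, and linear independence then only yields the relations $r_s\overline{w_t}=-e^{A_i\overline{C_j}}S_{ij}$. Ruling out this conspiracy is the heart of the matter --- it is precisely the mechanism that produces the nontrivial semi-commuting pairs on $F^2$ in Theorem 3.5 of \cite{Bauer1}, so it cannot be dismissed by independence alone; one must use the specific structure of $S_{ij}$ (together with the pure-type equations) to exclude it, and the proposal never does this. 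Your closing remark about a constant summand collapsing via $p_i\overline{c}-\overline{c}p_i\equiv0$ addresses only one degenerate sub-case and does not touch these cancellations. In short: the skeleton (operator identity, Berezin identity, $e^{|z|^2}$-coefficient extraction) is sound and consistent with the paper's techniques, but the decisive step that would settle the conjecture is missing, so the statement remains unproved by this argument.
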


We hope this work will generate some further interest in Operator Theory on Sobolev spaces.

\providecommand{\MR}{\relax\ifhmode\unskip\space\fi MR }
\providecommand{\MRhref}[2]{%
  \href{http://www.ams.org/mathscinet-getitem?mr=#1}{#2}
}
\providecommand{\href}[2]{#2}

\end{document}